\documentclass[12pt,a4paper]{amsart}
\usepackage{coolstyle}
\usepackage{multicol}
\usepackage{subcaption}

\begin{document}
\raggedbottom

\title{Taming Wild Knots with Mosaics}
\author[M. ~Deng]{Mary Y. Deng}
\email[M. ~Deng]{marydeng@uw.edu}
\author[A. K. ~Henrich]{Allison K. Henrich}
\email[A. K. ~Henrich]{henricha@seattleu.edu}
\author[S. ~Kawano]{Sean H. Kawano}
\email[S. ~Kawano]{shkawano@uw.edu}
\author[A. R. ~Tawfeek]{Andrew R. Tawfeek}
\email[A. R. ~Tawfeek]{atawfeek@uw.edu}

\begin{abstract}
    Wild knots—knots with infinite knotting behavior—have resisted traditional methods of knot classification, making them more of a curiosity in topology than a subject of sustained investigation. In this paper, we present a new way to investigate these objects. We extend Lomonaco and Kauffman's knot mosaic theory to represent a substantial subclass of wild knots that have isolated wild points. Our mosaics consist of infinite rooted trees with mosaics assigned to vertices and embedding functions governing connections. In developing this framework, we also introduce a notion of mosaic tangles as well as mosaic rigid vertex spatial graphs of which mosaic singular knots are a special case. 
\end{abstract}
\keywords{knot, wild knot, mosaic, rigid vertex spatial graph, singular knot, tangle}

\maketitle
\setcounter{tocdepth}{2}
\makeatletter
\def\l@subsection{\@tocline{2}{0pt}{2.5pc}{5pc}{}}
\makeatother
\tableofcontents

\section{Introduction}



Wild knots---knots with infinite knotting behavior---are a curiosity in knot theory. The vast majority of knot theorists are concerned with studying {\bf (tame) knots}, i.e., those knots that can be represented as finite closed polygonal chains. In this setting, knot equivalence is described in terms of finite sequences of Reidemeister moves. (See e.g., \cite{adams2004knot} or \cite{encyclopedia} for more on (tame) knot theory fundamentals.)  

Though studied by few, wild knots exhibit fascinating behavior. Take, for example, the wild slip knot, pictured in Figure \ref{slip}. While this knot---referred to by Ralph Fox in 1949 as a ``remarkable simple closed curve'' in \cite{fox1949remarkable}---looks like it can be undone by pulling out successive loops, it cannot be simplified. (According to Fox, it is only {\em almost} unknotted.) One might wonder if {\em any} knot requiring an infinite number of Reidemeister moves can be simplified. In 2021, Forest Kobayashi \cite{forest} justified the application of certain infinite collections of Reidemeister moves on wild knots. 

\begin{figure}[h]
\centering
\includegraphics[width=.5\linewidth]{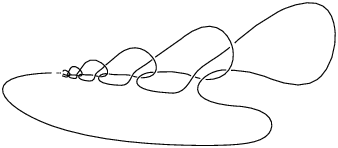} 
\caption{A wild slip knot}\label{slip}
\end{figure}

Because wild knots are difficult to classify, our aim is to represent a substantial subclass of them using a more discrete, combinatorial structure: knot mosaic tiles. In this paper, we develop and explore a new theory of wild mosaic knots.


\subsection{History and fundamentals of wild knots}

Since wild knots were defined by Fox and Artin in 1948 \cite{fox1948some, fox1949remarkable}, there have been some efforts devoted to classifying various subclasses of wild knots:  from earlier work in the 1960s and '70s \cite{HarFox,Lom,DoHo, McP} through more recent work in the 2000s \cite{Fri, JuLa, Nanyes, forest}. Also, higher dimensional wildly knotted spaces have been addressed~\cite{Fri1,Fri2}, and replacing $S^1$ by the Cantor set, one obtains the related study of the so-called {\em wild Cantor sets} whose classification has also been of interest because of their importance in dynamical systems~\cite{GRWZ}.

From a descriptive set theory perspective \cite{hjorth2000classification}, the classification of tame knots is trivial because there are only countably many of them (up to equivalence). In contrast, the following was shown by Kulikov:

\begin{theorem}[\cite{kulikov2017non}]
    The equivalence of wild knots is not classifiable by countable structures.
\end{theorem}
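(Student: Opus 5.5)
The plan is to use Hjorth's theory of turbulence \cite{hjorth2000classification}. By Hjorth's theorem, if a Polish group $G$ acts continuously and turbulently on a Polish space $X$, then the orbit equivalence relation $E_G^X$ is not classifiable by countable structures. This property passes upward along Borel (or even Baire-measurable) reductions. So it suffices to fix a convenient turbulent action and give a Borel map $x \mapsto K_x$ from $X$ to wild knots with
\[ x \mathrel{E_G^X} y \iff K_x \text{ is equivalent to } K_y . \]

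First I will set up the ambient Polish space. Knots are topological embeddings $S^1 \to S^3$, which form a $G_\delta$ subset of $C(S^1,S^3)$, or equivalently their images in the hyperspace $K(S^3)$. Knot equivalence is then the orbit relation of $\mathrm{Homeo}(S^3)$, which is analytic, so the framework applies.

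For the source I will take a standard turbulent action, such as $\ell^1$ (or $c_0$) acting on $\mathbb{R}^{\mathbb{N}}$ by translation. I would first check whether a discretized version suffices: sequences of integers modulo a tail condition weighted by summability. A naive tail condition only gives countable Borel equivalence relations like $E_0$, and those are classifiable by countable structures. So the encoding must genuinely carry continuum-sized, metric-type information.

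The construction of $K_x$ will be a knot whose set of wild points is a Cantor set $C \subset S^1$. In the complementary arcs of $C$ I will tie prime tame knots (e.g. torus knots $T_{2,2k+1}$), whose types and sizes are dictated by the coordinates of $x$. The data is arranged so that any ambient homeomorphism must carry $C$ to $C$ and match knotted arcs in a way that induces a self-map of the index set. I then need that such induced maps of accumulation patterns correspond exactly to the allowed translations in $G$. The ``if'' direction, producing an explicit homeomorphism of $S^3$ from $x \mathrel{E} y$, should be a shrinking argument. Successive finite modifications near the wild points are performed in balls of diameter tending to $0$, and the infinite composition converges to a homeomorphism. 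This is the same kind of justification as Kobayashi's infinite Reidemeister sequences \cite{forest}.

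The main obstacle is the ``only if'' direction: showing that equivalence of $K_x$ and $K_y$ forces $x \mathrel{E} y$. This requires invariants of wild knots robust enough to read off the encoding. I would try local invariants at each wild point, namely the germ of the knot and the inverse system of fundamental groups of complements of small neighbourhoods. These are combined with uniqueness of prime decomposition for the tame pieces between wild points, so that a homeomorphism determines the sequence of prime summands up to the controlled rearrangements. If this matching turns out to only capture a countable relation, the fallback is to encode $x$ in the topology of the wild set together with the decorations. For example, I would use a marked subset of $C$ and appeal to the known non-classifiability of homeomorphism of suitable marked compacta.
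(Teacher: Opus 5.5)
The paper gives no proof of this statement. It is quoted from Kulikov, whose argument is a turbulence argument in Hjorth's sense. So your top-level strategy is the right one: a turbulent action such as $\ell^1$ or $c_0$ on $\mathbb{R}^{\mathbb{N}}$, a Borel map $x\mapsto K_x$, and upward inheritance of non-classifiability. Your set-up of knots as a $G_\delta$ set of embeddings with an analytic orbit relation is also fine. But the proposal stops where the proof begins: there is no working reduction and no rigidity argument, and the encoding you sketch cannot work.

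In your construction, the only data that survive an ambient homeomorphism are discrete: the wild set $C$, the cyclic order of its countably many complementary arcs, and the prime knot type tied in each. The ``sizes'' are not invariants, since homeomorphisms rescale freely. Conversely, the shrinking argument you intend for the ``if'' direction shows that every label-preserving isomorphism of these cyclic orders is realized by an ambient homeomorphism. Take the induced homeomorphism of $S^1$ carrying $C$ to $C'$, extend it, and then correct the local knots inside disjoint balls whose diameters tend to $0$.

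Hence, on the image of $x\mapsto K_x$, knot equivalence is just isomorphism of countable labelled cyclic orders (up to reversal and mirroring). If your map were a reduction, it would classify the turbulent relation $E^X_G$ by countable structures, contradicting Hjorth's theorem.

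The same objection disposes of your ``only if'' strategy in general. Suppose an argument recovers the class of $x$ from Borel-computable countable data: local knot types, prime decompositions, or a label-preserving bijection of a countable index set. Together with the ``if'' direction, it would produce exactly such a classification.

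The fallback has the same defect. The wild set is a closed subset of $S^1$. Closed subsets of the circle, even with finitely many marked closed subsets, are classified up to homeomorphism by the countable ordered data of their complementary arcs, unlike the compacta in Hjorth's non-classification theorem.

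What is missing is a genuinely non-discrete invariant carried by the three-dimensional embedding near the wild set. It must be something an ambient homeomorphism can alter only in the controlled, asymptotically vanishing way permitted by $G$. You would then need a rigidity theorem showing that $K_x$ and $K_y$ are equivalent exactly when $x \mathrel{E^X_G} y$. Designing such knots and proving that rigidity is the substance of the cited work; your proposal names it as the main obstacle but supplies no mechanism for it.
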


\vspace{-.1in}

Before we get too far, let us introduce some of the classical formal definitions we'll need. We'll later adapt these topological notions to a more combinatorial framework.

\begin{definition}[\cite{fox1949remarkable, mcpherson1970wild, p-index}]\label{wilddef} We define {\em knot}, {\em link}, {\em knot diagram}, {\em Reidemeister moves}, {\em tame knot}, {\em wild knot}, {\em locally tame}, {\em wild point}, and {\em p-index} as follows. 

\begin{enumerate}
\item A {\bf knot} is a simple closed curve embedded in three-dimensional space (either $\mathbb{R}^3$ or $S^3$).
\item A {\bf link} is a finite collection of non-intersecting knots.
\item A {\bf knot diagram} is a regular projection of a knot\footnote{Throughout the work, ``knot'' may refer to either a knot or a link.} onto $\mathbb{R}^2$ where we retain the under- and over-crossing information at each double point. 
\item A {\bf tame knot} is a knot that can be approximated by (is isotopic to) a finite polygonal chain. Equivalently, we can define a tame knot as a piecewise-linear curve in $\mathbb{R}^3$ or $S^3$ consisting of a finite collection of line segments. (Most topologists refer to tame knots simply as ``knots.'') 
\item Two tame knots are equivalent if their diagrams are related by a finite sequence of {\bf Reidemeister moves}, shown in Fig. \ref{reidMoves}.
\item A {\bf wild knot} is any knot that is not tame.
\item Let $q$ be a point on a knot $K$ in a space $S$ (where $S$ is either $\mathbb{R}^3$ or $S^3$). Knot $K$ is {\bf locally tame} at $q$ if there exists a neighborhood, $N_q$, of $q$ in $S$ such that the intersection of $K$ with the closure of $N_q$ (picture a---possibly smooshed---closed ball around point $q$) can be approximated by a finite piecewise-linear curve. 
\item A point $q$ of a knot $K$ in a space $S$ is a {\bf wild point} if it is not locally tame.

\item The {\bf penetration index} or {\bf p-index} of a point $q$ in a knot $K$ is the smallest integer $n$ such that there are arbitrarily small neighborhoods of $q$, each meeting $K$ on its boundary in $n$ points. (Note: For tame knots, $K$, the p-index of any point $q\in K$ must be 2.)
\end{enumerate}
\end{definition}

\begin{figure}[H] 
\includegraphics[width=0.7\linewidth]{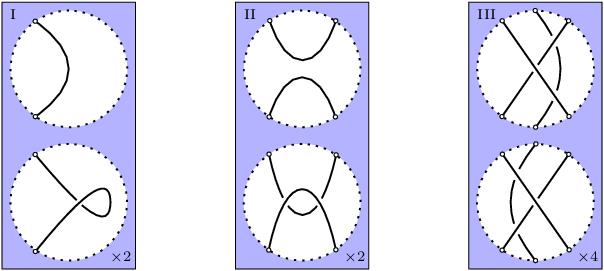}
\caption{Representatives of the three classical Reidemeister moves}\label{reidMoves}
\end{figure}

\section{Background}

In this section, we review some additional important definitions and theorems, in the realms of tangles, rigid vertex spatial graphs, and knot mosaics. We will need these to develop our wild mosaic knot framework later. 

\subsection{Tangles}

A \textbf{$k$-tangle} is a proper embedding of a disjoint union of $k$ arcs and $m$ circles into a $3$-ball, $B^3$, such that the endpoints of these arcs are glued along the ball's boundary at $2k$ marked points \cite{encyclopedia}. Knot theorists are particularly interested in $2$-tangles since they can be closed to form knots and links in a straightforward way. Traditionally, as defined by Conway in \cite{tangles}, we picture a $2$-tangle as a portion of a knot diagram that lies within a circle, where the four points at which the strands cross the circle occur in the four compass directions of NW, NE, SW, and SE. 

Two tangles are \textit{equivalent} if we can deform one into the other through a sequence of Reidemeister moves and planar isotopies while keeping the endpoints of the arcs in the tangle fixed.
 
\begin{figure}[H] \label{tangle-example}
\includegraphics[width=.65\linewidth]{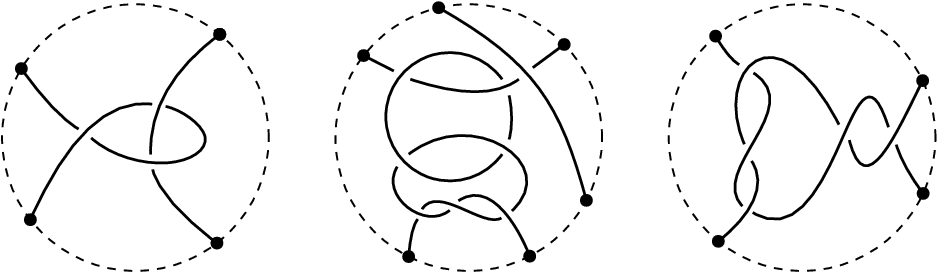}
\caption{Examples of various tangles.}
\end{figure}

\subsection{Rigid vertex spatial graphs}

Next, we review the notions of a spatial graph and rigid vertex spatial graph.

A {\bf spatial graph} is a one-dimensional CW complex consisting of finitely many \textbf{vertices} (zero-dimensional) and finitely many (one-dimensional) \textbf{edges} (including \textbf{loops}), tamely embedded in $\mathbb{R}^3$ or $S^3$. Each edge is homeomorphic to a closed interval whose boundary is mapped to either two distinct vertices in the graph or a single vertex, in the case of a loop.

In \cite{kauffman-graphs}, Kauffman presented spatial graphs as planar diagrams of vertices and edges such that each vertex of the graph locally has a collection of rays emanating from it (representing the edges that have that vertex as a boundary). When edges intersect in the plane, their over/under crossing information is given, indicating how the graph is embedded in space. By introducing new Reidemeister moves involving vertices, spatial graphs can be studied from the perspective of knot theory.

Among the types of spatial graphs Kauffman considered, he specialized to $4$-valent graphs with \textbf{rigid vertices}, which we will denote as $\rv{4}$ graphs. Kauffman describes rigid vertices as flat disks, each having (four) flexible strings emanating from it, with string endpoints attached to the disk in a distinct cyclic order. These may, equivalently, be viewed as {\bf singular knots}, i.e. knots where a finite number of transversal self-intersections are allowed. 

Rigid vertex spatial graph equivalence, or (piecewise linear) {\bf ambient isotopy}, is described via diagrams using an extended set of Reidemeister moves that captures how these rigid vertices can twist in space and interact with  other edges. Kauffman proves that these moves, shown in Figure \ref{rv4-isotopy} as moves I-IV and V*, capture the notion of ambient isotopy in space for $\rv{4}$ graphs.


\begin{theorem}[\cite{kauffman-graphs}]
    Piecewise linear (PL) ambient isotopy of embedded $\rv{4}$ graphs is generated by the moves I - IV, V* shown in Figure \ref{rv4-isotopy}. That is, if two embedded $\rv{4}$ graphs are ambient isotopic, then any two diagrams of them are related by a finite sequence of these moves. 
\end{theorem}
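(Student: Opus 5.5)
The plan is to follow the standard proof of Reidemeister's theorem for tame knots, adapted to rigid vertices. First, replace a PL ambient isotopy by a finite sequence of elementary moves. An embedded $\rv{4}$ graph will be modeled as a polygonal graph in which each rigid vertex is a small flat disk (a square) with the four edges attached at its corners in fixed cyclic order. In the PL category, ambient isotopy of such graphs is generated by two kinds of moves. The first kind are \emph{triangle moves}: replace an edge segment $[a,b]$ by $[a,c]\cup[c,b]$, or the reverse, where the triangle $abc$ meets the rest of the graph only in $[a,b]$. The second kind are \emph{rigid motions of a vertex disk}: small isotopies of the disk together with short collars of its four edges, supported in a ball meeting the graph only in that star. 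This reduction uses the usual argument: a PL isotopy can be subdivided into finitely many moves, each supported in a small ball, and within such a ball the graph looks either like an arc or like a vertex star.

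Second, fix a projection direction and perturb it so that every stage of the sequence is in general position. Then the diagram changes only at finitely many non-generic instants, and I classify what happens at each instant. For a triangle move, I subdivide the triangle into small triangles so that each one's projection meets at most one other edge segment, crossing, or vertex disk. The non-vertex cases give the classical moves I, II, III, exactly as in the knot case; this is where Theorem-level facts about tame knots (Figure~\ref{reidMoves}) apply. The new cases occur when a small triangle passes over or under a vertex disk. There, the edge sweeps across the vertex, which gives move IV: a strand moving past a rigid vertex from above or below.

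Third, handle the rigid motions of a vertex. Rotating the disk in its own plane is a planar isotopy. The only nontrivial generic event is the disk turning over, rotating by $\pi$ about an axis in the projection plane. This reverses the cyclic order seen from above and introduces a pair of twisted crossings among adjacent edges, which is move V*. A general motion of the disk decomposes into planar rotations, flips, and translations. Translations are absorbed into triangle moves of the incident edges.

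The main obstacle I expect is the vertex analysis: showing that every generic singularity involving a vertex disk reduces to IV or V*. In particular, the degenerate instant when the disk is edge-on to the projection direction must be checked to give exactly the flip, composed with II moves and not a new move. I would handle this by insisting that during a flip the collars are short, so no other strand crosses the star at that instant. I would also check both directions of the flip and both over/under versions of IV.

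Finally, two diagrams of the same embedding differ by a small isotopy plus a change of projection. Perturbing the projection direction is itself a generic one-parameter family, so the same analysis applies. Hence any two diagrams of isotopic $\rv{4}$ graphs are related by finitely many moves I--IV and V*.
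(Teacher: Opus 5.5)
The paper does not prove this statement; it quotes it from Kauffman. Your outline is the standard argument behind that citation, and it uses the same framework the paper itself invokes when it adds move VI: pass from PL ambient isotopy to elementary combinatorial (triangle) moves, using Graeub-type results, then read I--III off triangles away from vertices, IV off a triangle sweeping over or under a vertex, and V$^*$ off the vertex disk turning over. As a sketch this is sound; the care lies in the vertex-local bookkeeping you already flag, for instance a triangle on a collar based at its own disk's corner whose shadow sweeps across the disk, which is again an instance of IV with the passing strand belonging to an incident edge.
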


\begin{figure}[H] 
\includegraphics[width=0.7\linewidth]{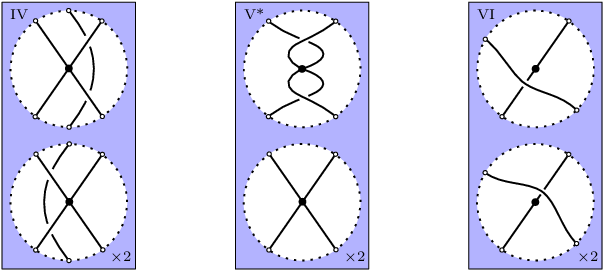}
\caption{Reidemeister moves IV, V* for $\rv{4}$ graphs and the additional move of VI for $\rv{2,4}$ graphs.}\label{rv4-isotopy}
\end{figure}

\subsection{$\rv{2,4}$ graphs}

Throughout this paper, we will work with objects that can be viewed as rigid vertex spatial graphs having degree $4$ rigid vertices, but we must also allow for degree $2$ vertices for reasons that will become clear. We refer to rigid vertex spatial graphs in which all vertices have degree $2$ or $4$ as {\bf (2,4)-rigid vertex spatial graphs}, or {\bf $\rv{2,4}$ graphs}. It is straightforward to expand $\rv{4}$ graph ambient isotopy to describe isotopies involving degree $2$ vertices by introducing one additional Reidemeister move type, pictured in its PL form in Figure \ref{reid6-fig}.

\begin{figure}[H] 
\includegraphics[width=.5\linewidth]{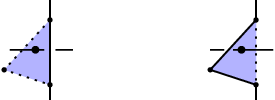} 
\caption{An illustration of Reidemeister move VI and its depiction as an elementary combinatorial isotopy.}\label{reid6-fig}
\end{figure}

Move VI, along with the moves I-IV and V* introduced in the previous section, generate PL ambient isotopy of embedded $\rv{2,4}$ graphs. The proof of the $\rv{4}$ case was already demonstrated by Kauffman \cite{kauffman-graphs}, so it remains to show that move VI is an elementary combinatorial isotopy. Note: it was proven in \cite{graeub1950semilinearen} that PL ambient isotopy and combinatorial isotopy are equivalent for PL links in three-dimensional space, so this suffices. 

To show something is an \textbf{elementary combinatorial isotopy} amounts to showing that we may embed a triangle into $\mathbb{R}^3$ piecewise linearly such that its interior is disjoint from the graph (in our case, $\rv{2,4}$) embedding, and the triangle shares either one or two edges with the embedded graph. We then replace the shared edges of the triangle with the unshared edges, and the result is a new PL embedding of the same graph with a different set of selected points. Since this is how we've depicted our new Reidemeister move in Figure \ref{reid6-fig}, we see that move VI is an elementary combinatorial isotopy. Moreover, any other elementary combinatorial isotopy involving a degree $2$ vertex can be achieved by a combination of moves I - IV, V*, and VI. So, we have the following.

\begin{theorem}
    If any two embedded $\rv{2,4}$ graphs are ambient isotopic, then any two of their planar diagrams are related by a finite sequence of the Reidemeister moves I - IV, V*, and VI.
\end{theorem}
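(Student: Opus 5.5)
The plan is to reduce to Kauffman's theorem for $\rv{4}$ graphs, using the equivalence of PL ambient isotopy and combinatorial isotopy from \cite{graeub1950semilinearen}.

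Suppose two embedded $\rv{2,4}$ graphs $G$ and $G'$ are ambient isotopic. By \cite{graeub1950semilinearen}, they are related by a finite sequence of elementary combinatorial isotopies (triangle moves). I will first put all these moves in general position with respect to a fixed projection direction. Perturbing each triangle slightly, I can assume every intermediate embedding has a regular diagram. Then each triangle move, viewed in the projection, changes the diagram in one of a small number of local ways. The argument now splits into two cases.

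\emph{Case 1: the triangle avoids every degree-$2$ vertex.} Then no degree-$2$ vertex lies on the edges the move replaces. Each degree-$2$ vertex is just a marked point in the interior of a strand, and I treat it as if it were not there. The move is then a triangle move on an $\rv{4}$ graph (or on a knot, when no $4$-valent vertices are involved). Its effect on the diagram is therefore a finite combination of moves I--IV and V*, by Kauffman's theorem and the standard Reidemeister argument. After subdividing the triangle, I can keep degree-$2$ vertices out of the small regions where those local moves happen. So they are carried along by planar isotopy.

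\emph{Case 2: a degree-$2$ vertex $v$ is a corner of the triangle.} This means $v$ is an endpoint of the replaced edges, so the move slides $v$ along or off the strand. I subdivide the triangle into small triangles. Each small triangle either avoids $v$, which is handled as in Case 1, or has $v$ as a corner. In the second case, I make it thin enough that its projection meets no other strand and contains no crossings. That small move is then exactly the local picture of move VI, or its inverse, as in Figure \ref{reid6-fig}. Moves I--IV and V* cover any other strands passing over or under the original triangle.

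Finally, different diagrams of the same embedding differ by planar isotopy together with the usual moves. Concatenating all the pieces gives the required finite sequence of moves I--IV, V*, and VI.

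The main obstacle is Case 2: showing that the subdivision can always isolate $v$ so that only move VI is needed there. Near a vertex, general position is delicate. I would first try choosing the subdivision using a small neighborhood of $v$ whose projection is disjoint from all crossings; such a neighborhood exists because the diagram is regular.
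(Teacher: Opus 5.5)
Your overall route matches the paper's. You pass to triangle moves via \cite{graeub1950semilinearen}, let Kauffman's theorem handle everything that does not see a degree-$2$ vertex, and check what triangle moves do near degree-$2$ vertices. The paper only asserts this last step, so it is the part you must actually supply, and that is where the gap is.

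You sort triangles by whether they touch $v$ in $\mathbb{R}^3$, but what matters is whether their \emph{projections} cover $\pi(v)$. In Case~1 you claim that a triangle without $v$ as a corner can, after subdivision, be kept away from $v$, so that $v$ is carried along by planar isotopy. This fails whenever another strand $s$ is pushed across $v$. The triangle lies entirely above or below $v$, so it falls into your Case~1, yet $\pi(v)$ lies in the interior of its projection. That is an open condition, so neither general position nor subdivision removes it: some sub-triangle still projects over $\pi(v)$. Its effect on the diagram is that the crossing of $s$ with the strand through $v$ moves from one side of $v$ to the other. With $v$ erased this event is just a planar isotopy or an ordinary move. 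So applying Kauffman's theorem to the graph without $v$ and then putting $v$ back is not legitimate: no planar isotopy that carries $v$ along can change the order of $v$ and a crossing along its strand.

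Conversely, the moves you label as move~VI in Case~2 involve no third strand. These are thin triangles with $v$ as a corner whose projections meet no other strand and contain no crossings. They only reshape the two edges at $v$, giving a planar isotopy or at most a small kink formed by those two edges. So as written, the one genuinely new diagrammatic event for a degree-$2$ vertex is never accounted for: a strand sweeping over or under it, the degree-$2$ analogue of move~IV. Your argument therefore never invokes a degree-$2$ move where one is actually needed.

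To repair this, organize the subdivision by the projection:
\begin{itemize}
\item Refine so that each small triangle whose projection contains $\pi(v)$ lies entirely above or below $v$, its projection contains no crossings, and it meets only the two edges at $v$.
\item Identify that local picture with move~VI, or show it is generated by the listed moves.
\item Check separately that small triangles with $v$ as a corner give only planar isotopy or a kink at $v$, and reduce the kink to moves on the list.
\end{itemize}
Also, a triangle move cannot delete a graph vertex, so ``sliding $v$ off the strand'' is not a case that arises.
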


We proceed by translating $\rv{2,4}$ graphs and their moves into a mosaic setting.

\subsection{Knot mosaics}

Knot mosaic diagrams were first introduced by Kauffman and Lomonaco in 2008 \cite{lomonaco2008quantum} towards the development of quantum knot systems. These are $n \times n$ grids of \textbf{suitably connected} knot mosaic tiles, in the sense that each connection point of a tile touches a connection point of a contiguous tile. 

\begin{definition}[Mosaic, Knot Mosaic] \label{mosaicdef}
    Denote by $\mathbb{T}=\{T_0,\dots,T_{10}\}$ the standard set of 11 tiles, shown in Figure \ref{tile-fig}. For a positive integer $n$, we define the following:
    \begin{itemize}
        \item An {\bf $n$-mosaic} $M$ is an $n \times n$ matrix with entries from $\mathbb{T}$ where $n$ is the dimension of the mosaic. The set of $n$-mosaics is denoted by $\MM^{(n)}$.
        \item A {\bf knot $n$-mosaic} is an $n$-mosaic in which all tiles are suitably connected. We let $\KK^{(n)}$ denote the subset of $\MM^{(n)}$ of all knot $n$-mosaics. We assume that connection points are {\em not} allowed along the boundary of a knot mosaic.
    \end{itemize}
\end{definition}

Later, in section \ref{tangmossec}, we will define the notion of a {\bf tangle mosaic} where connection points {\em are} allowed on the boundary.


\begin{figure}[h] 
\centering
\includegraphics[width=.9\linewidth]{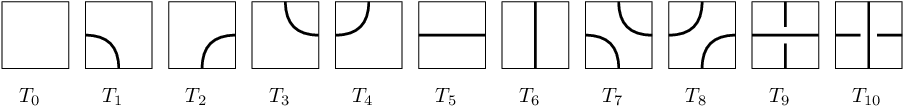} 
\caption{The eleven standard mosaic tiles.}\label{tile-fig}
\end{figure}

Just as with ordinary knot diagrams, there is a set of Reidemeister-type moves that relate mosaics representing equivalent topological knots. This mosaic-based set of moves---shown in Figures \ref{mosaicpl}, \ref{mosaic12}, and \ref{mosaic3}---is more extensive than the set of classical Reidemeister moves since planar isotopies must be exhaustively described in the mosaic setting. 

\begin{figure}
\centering
\includegraphics[width=\linewidth]{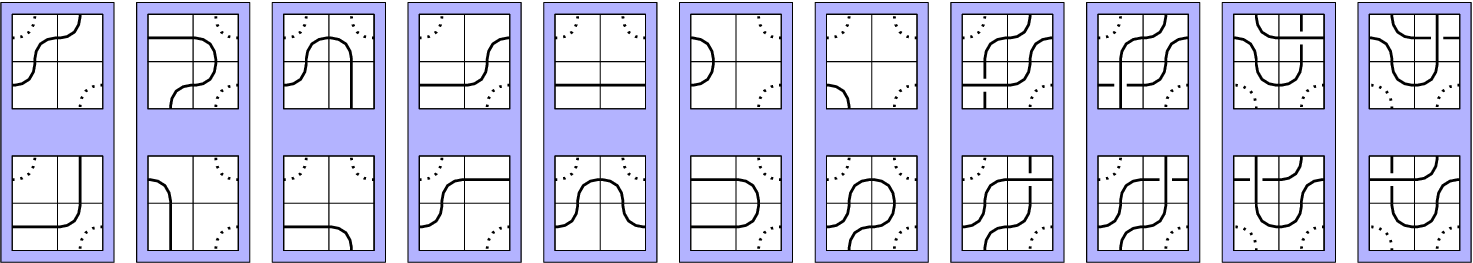} 
\caption{The eleven mosaic planar isotopy moves.}
\label{mosaicpl}
\end{figure}

\begin{figure}
\centering
\includegraphics[width=.2\linewidth]{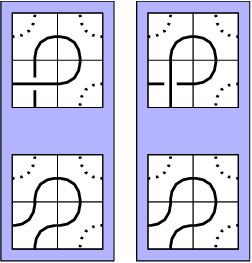} \qquad \qquad \includegraphics[width=.415\linewidth]{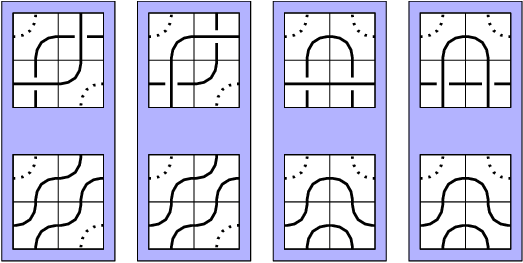} 
\caption{The mosaic Reidemeister $1$ moves and Reidemeister $2$ moves.}
\label{mosaic12}
\end{figure}

\begin{figure}
\centering
\includegraphics[width=.9\linewidth]{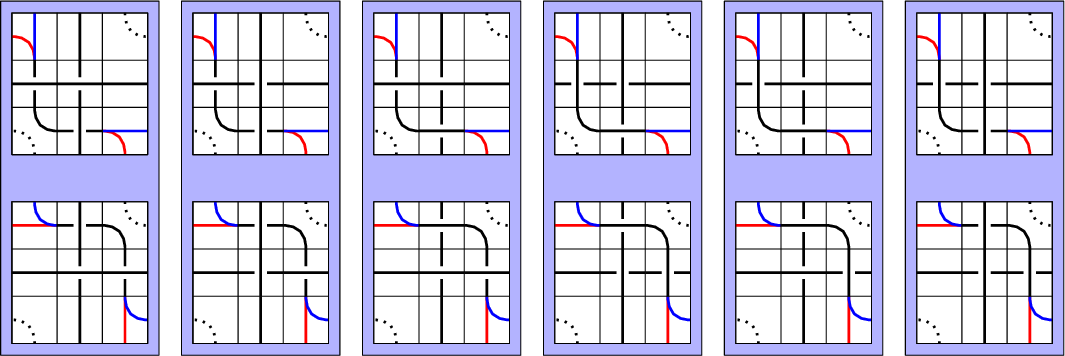} 
\caption{The mosaic Reidemeister $3$ moves. The dual-colored tiles denote two possibilities and are in sync with their isotopic counterparts.}
\label{mosaic3}
\end{figure}

Moreover, natural operations that expand or contract the size of the mosaic must be included in the list of equivalences for knot mosaics. To be precise, a knot $n$-mosaic is considered to be equivalent to a knot $m$-mosaic if they are relatable via a sequence of mosaic Reidemeister moves and {\em mosaic injections}.

\begin{definition}\label{inject}
Per \cite{lomonaco2008quantum}, a {\bf mosaic injection} $\iota: \KK^{(n)}\rightarrow \KK^{(n+1)}$ sends a knot mosaic $M^{(n)}$ to $M^{(n+1)}$, where 

\begin{equation*}
M^{(n+1)}_{ij} = 
\left\{
    \begin{array}{lr}
        M^{(n)}_{ij}, & \text{if } 1\leq i,j\leq  n\\
        \zero, & \text{otherwise.} 
    \end{array}
    \right.
\end{equation*}
\end{definition}

 Every tame knot admits a mosaic representative, as demonstrated in \cite{lomonaco2008quantum}. But Kauffman and Lomonaco went further, conjecturing that their set of knot mosaic moves completely captures topological knot equivalence. In 2014, Kuriya and Shehab proved it. See \cite{kuriya2014lomonaco}.

\begin{theorem}[Lomonaco-Kauffman Conjecture, \cite{kuriya2014lomonaco}] \label{lc-conj}
    Let $k_1$ and $k_2$ be two tame knots (or links), and let $K_1$ and $K_2$ be two arbitrarily chosen mosaic representatives of $k_1$ and $k_2$, respectively. Then $k_1$ and $k_2$ are of the same knot type if and only if the representative mosaics $K_1$ and $K_2$ are related by a finite sequence of mosaic Reidemeister moves and injections.
\end{theorem}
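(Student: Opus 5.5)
This is the Kuriya--Shehab theorem, so my plan follows the standard strategy: compare mosaics with piecewise-linear diagrams and reduce to Reidemeister's theorem.

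The \emph{easy direction} comes first. Each of the eleven planar isotopy moves (Figure~\ref{mosaicpl}) and the mosaic Reidemeister $1$, $2$, $3$ moves (Figures~\ref{mosaic12} and \ref{mosaic3}) replaces a sub-mosaic by another with the same boundary connection points. Reading a tile as a piece of a knot diagram, each such replacement realizes either a planar isotopy fixing the boundary or a classical Reidemeister move supported in that sub-square. A mosaic injection only adds blank tiles $T_0$, so the represented diagram is unchanged. Hence a finite sequence of moves and injections yields ambient isotopic knots.

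For the \emph{hard direction}, suppose $k_1$ and $k_2$ are equivalent. By the classical Reidemeister theorem, the diagrams read off from $K_1$ and $K_2$ are related by a finite sequence of planar isotopies and Reidemeister moves I--III. I would do four things.

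\begin{enumerate}
\item Use injections to enlarge both mosaics to a common size $n$, large enough for all later work.
\item Introduce a refinement operation that replaces each tile by a $2\times 2$ (or $3\times 3$) block drawing the same picture. I would show this refinement is achievable by mosaic moves plus injections. To do so, inject to make room, then push strands outward row by row using the planar isotopy moves. This is the ``scaling'' lemma.
\item Using refinement, show that any single classical Reidemeister move can be localized. After sufficient refinement, its support disk meets the diagram inside a square sub-mosaic in one of the standard configurations, and that configuration is handled by a mosaic Reidemeister move.
\item Show that any planar isotopy between two mosaic-representable diagrams, once sufficiently refined, is a finite composition of the eleven planar isotopy moves.
\end{enumerate}

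For step 4, I would first pass to a grid-aligned PL approximation. A planar isotopy of PL diagrams is a finite sequence of elementary triangle moves (the two-dimensional analogue of combinatorial isotopy, as in \cite{graeub1950semilinearen}). After refinement, each triangle move can be done cell by cell by sweeping a strand across one tile at a time, and each tile-sweep is one of the listed planar moves.

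The main obstacle is step 4, together with the refinement lemma. One must verify that the eleven planar moves really generate all grid-compatible planar isotopies, including moves near crossing tiles and adjacent strands. My first attempt would be an induction on the number of tiles in the region swept out by a triangle move. I would peel off one boundary tile at a time and check that each local configuration, which is finite up to symmetry, is covered by a listed move. A good choice of refinement factor should guarantee that every swept tile is isolated from other strands.
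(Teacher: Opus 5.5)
The paper gives no proof of this statement. It is quoted from \cite{kuriya2014lomonaco}, and the paper argues only the easy direction, in the tangle and rigid-vertex settings: the remark after Proposition~\ref{tangle-rep} and Proposition~\ref{prop:lk-easy-rv} realize each mosaic move by an isotopy supported in tile sub-cubes. The tangle version of the hard direction is left open as Conjecture~\ref{conj:lk-tangles}. Your easy direction is fine and agrees with that remark. In your hard direction, though, the decisive step is unproved, and the mechanism you propose for it would not work as stated.

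The trouble is step~4, on which step~3 and the refinement lemma also rest. Triangle moves whose interiors miss the rest of the diagram never move a crossing. A crossing in the interior of $[a,b]$ forces the other strand into the open triangle, and a crossing at $a$ or $b$ stays where it is. Two mosaics of planar-isotopic diagrams typically have their crossing tiles $T_9, T_{10}$ in different cells. So you also need crossing slides, i.e.\ triangle moves through which a second strand passes, entering through $[a,b]$ and leaving through $[a,c]$ or $[c,b]$. For these, no refinement factor makes the swept tiles free of other strands, because those tiles are occupied by the very strand crossing the one you push. The same thing happens in your refinement lemma as soon as a row being pushed contains a crossing tile.

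What is missing is a genuine combinatorial lemma with two parts. First, a crossing tile can be moved one cell past neighbouring straight and turning tiles by mosaic moves. Second, and more basically, any two sufficiently fine grid approximations of the same PL diagram, or of two diagrams differing by one elementary move (triangle move or crossing slide), are related by mosaic planar moves. This approximation step cannot be skipped, because the intermediate diagrams of a planar isotopy are not mosaics. You flag this as ``the main obstacle'' and suggest an induction, but until that case analysis is carried out the hard direction is not established. It is exactly the content the paper delegates to \cite{kuriya2014lomonaco}.

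A minor point: a literal $2\times 2$ subdivision puts connection points at sub-tile corners. Use an odd factor, as in the paper's $p$-zoom, or a consistently shifted convention.
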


Once mosaics were established as an alternative representation of topological knots and links, researchers became interested in a variety of questions about them. Notably, several early teams of researchers studied combinatorial invariants including the {\bf mosaic number} of a knot, i.e. the minimum number $n$ such that the knot can be represented as a knot $n$-mosaic. See, for instance, \cite{02d_Dye2014, 02d_Ludwig2018, 02d_Ludwig2013, 02d_Lee2014-3-5, 02d_Lee2014-4-5}. In addition to combinatorial studies, geometric ideas related to mosaics have been explored in works such as \cite{carlisle2013upper, ganzell2020virtual}. We will leave these fascinating avenues of study here and move on to expanding the notion of a knot mosaic to tangles.


\section{Expanding Mosaics} 

\subsection{Tangle Mosaics}\label{tangmossec}

\begin{definition}[Tangle Mosaic] For a positive integer $n$, a {\bf tangle $n$-mosaic} $L$ is an $n$-mosaic in which all tiles are suitably connected and connection points {\em are} allowed along the boundary of the square. We let $\LL^{(n)}$ denote the subset of all tangle $n$-mosaics, and $\LL = \bigcup_{n \in \NN} \LL^{(n)}.$ Adorably, $\KK^{(n)}\subset\LL^{(n)}\subset\MM^{(n)}$.\end{definition}

Just as with ordinary knot mosaic injections (see Definition \ref{inject}), we have a notion of mosaic injections for tangle mosaics. Figure \ref{injectM} illustrates an example of this injection.

\begin{definition}\label{tangle-injections}
A {\bf tangle injection}$ \iota: \LL^{(n)}\rightarrow \LL^{(n+1)}$ sends a tangle mosaic $L^{(n)}$ to $L^{(n+1)}$, where 

\begin{equation*}
L^{(n+1)}_{ij} = 
\left\{
    \begin{array}{lr}
        L^{(n)}_{ij}, & \text{if } 1\leq i,j\leq  n\\
        \\
        \five, & j = n+1 \text{ and } L^{(n)}_{in}\in\{\two,\three,\five, \seven, \eight, \nine, \ten\}\\
        \\
        \six, & i = n+1 \text{ and } L^{(n)}_{nj}\in\{\one,\two,\six,\seven, \eight, \nine, \ten\}\\
        \\
        \zero , & \text{otherwise.}
    \end{array}
    \right.
\end{equation*}
\end{definition}

\begin{figure}
\centering
\includegraphics[width=0.6\textwidth]{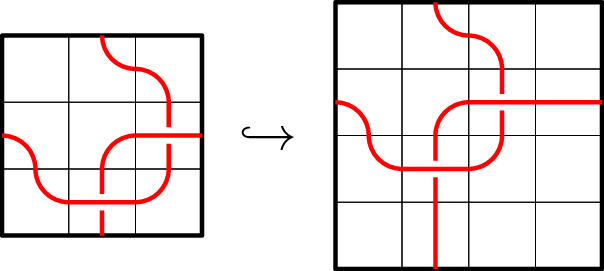}
\caption{An injection of a tangle 3-mosaic into a tangle 4-mosaic.}\label{injectM}
\end{figure}

 \begin{proposition}[Representability of Tame Tangles] \label{tangle-rep}
     Every tame $m$-tangle $\tau$ admits a tangle mosaic representative $L \in \mathbb{L}^{(n)}$ for some $n \in \mathbb{N}$, with $2m$ connection points along the boundary of $L$.
\end{proposition}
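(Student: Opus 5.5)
Our plan is to reduce to the knot case, Theorem \ref{lc-conj} and the representability result of \cite{lomonaco2008quantum} that every tame knot or link admits a knot mosaic. The argument runs through a closure-and-cut construction.

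First, we put $\tau$ in a convenient position. Since $\tau$ is tame, it has a diagram inside a disk with $2m$ endpoints on the boundary circle. By a planar isotopy fixing the cyclic order of the endpoints, we deform this into a square diagram. In it, all $2m$ endpoints lie on the right edge of the square, and near that edge the strands are horizontal parallel segments with no crossings. Equivalently, we first isotope the endpoints to one side and then straighten the strands in a collar of that side.

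Next, we close up the tangle to a tame link $k$. We add $m$ nested ``caps'' outside the right edge, joining endpoints $1$ and $2m$, $2$ and $2m-1$, and so on, using only planar arcs with no crossings. By \cite{lomonaco2008quantum}, $k$ has a knot mosaic representative. However, we need more than that: we want a mosaic in which the original square and the cap region are separated by a vertical column line. To obtain this, we would rerun the Lomonaco--Kauffman construction rather than quote it. That construction approximates a PL diagram by a fine grid and replaces each grid cell by a tile. We choose the PL diagram so that the vertical cut line $x=c$ meets the diagram only in $2m$ transverse horizontal segments, and we refine the grid so that $x=c$ is a grid line and these segments lie on distinct rows away from crossings.

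Finally, we read off the tangle mosaic. We take the columns to the left of the cut and pad with rows and columns of $\zero$ tiles, or apply tangle injections (Definition \ref{tangle-injections}), until the block is square. This gives an $n\times n$ array in which every interior connection is suitable. The only connection points on the boundary are the $2m$ horizontal strand ends crossing the cut, which lie on $\five$ or similar tiles. Hence it is a tangle mosaic $L\in\LL^{(n)}$ with $2m$ boundary connection points, representing $\tau$ because the left part of the diagram is unchanged up to planar isotopy with endpoints fixed.

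The main obstacle is the middle step. We must ensure that the mosaic approximation respects the cut, so that the cut line is a grid line crossed exactly in the $2m$ prescribed points, with no crossings or turns there. We would handle this by making the diagram straight, with horizontal strands, in an $\epsilon$-collar of the cut, and choosing the grid mesh smaller than $\epsilon$ and smaller than the spacing between strands. An alternative is to apply the grid approximation directly to the tangle diagram in the square, which avoids the closure entirely. That route requires the same care at the boundary.
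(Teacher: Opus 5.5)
Your argument is correct, but the closure-and-cut wrapper does no real work. You need the line $x=c$ to be a grid line crossed by the diagram in exactly $2m$ horizontal segments on distinct rows. So you cannot quote the Lomonaco--Kauffman representability result as a black box, and you end up rerunning the grid refinement yourself. Once you do that, the caps are simply thrown away. What remains is exactly the paper's proof, which is the ``alternative'' in your last paragraph: place a regular PL diagram of $\tau$ in a square, refine the grid until each cell contains at most one local feature (a straight segment, a turn, or one crossing), isotope the endpoints to midpoints of boundary tile edges, and read off tiles.

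The one thing your route makes explicit, which the paper leaves to ``a small planar isotopy,'' is the boundary behavior. Pushing all endpoints to one side and straightening the strands in a collar wider than a grid cell guarantees the boundary tiles are suitably connected. That collar trick can be used directly in the paper's version without any closure.

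Two small corrections:
\begin{enumerate}
\item Theorem~\ref{lc-conj} concerns equivalence, not representability, and plays no role here.
\item When squaring up the $N\times c$ block, add columns of blank $T_0$ tiles on the left, where the block's edge carries no connection points. Blank padding on the right would leave the $2m$ strand ends unmatched. The tangle injections of Definition~\ref{tangle-injections} are defined only on square mosaics and add a row and a column simultaneously, so they cannot fix a rectangular block.
\end{enumerate}
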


\begin{proof}
Since $\tau$ is tame, it admits a regular PL diagram in a disk whose $2m$ endpoints lie on the boundary of the disk. Embed the disk in a square and refine by a sufficiently fine square grid so that each grid cell contains at most one local feature: a straight strand segment, a turn, or a single crossing. After a small planar isotopy, the boundary endpoints occur at midpoints of boundary tile edges. Assigning to each grid cell the corresponding tile from $T_0,\ldots,T_{10}$ gives a suitably connected tangle mosaic $L \in \LL^{(n)}$ with exactly the prescribed $2m$ boundary connection points.
\end{proof}

\begin{remark}
Conversely, every mosaic tangle determines a well-defined topological tangle equivalence class. The geometric realization of a mosaic $L \in \mathbb{L}^{(n)}$ is a tangle in $[0,n]^2 \times I$ obtained by interpreting each tile as its standard local picture, and the usual mosaic Reidemeister moves are realized by ambient isotopies supported in the corresponding tile sub-cubes and fixed on the boundary. Thus the realization map descends to a well-defined map from mosaic tangle equivalence classes to topological tangle equivalence classes. Surjectivity of this map onto tame tangles is the content of the construction above; the (currently conjectural, see Conjecture~\ref{conj:lk-tangles} below) Kuriya--Shehab-type statement for tangles is the assertion that this map is also \emph{injective}.
\end{remark}


\begin{conjecture}[Topological Equivalence Conjecture for Tangles]\label{conj:lk-tangles}
The statement of Theorem \ref{lc-conj}, adapted for tangle mosaics, holds.

Precisely, let $t_1$ and $t_2$ be two tame tangles, and let $T_1$ and $T_2$ be two arbitrarily chosen mosaic representatives of $t_1$ and $t_2$, respectively. Then $t_1$ and $t_2$ are of the same tangle type if and only if, after applying finitely many tangle injections to place the representatives in a common dimension if necessary, the resulting mosaics are related by a finite sequence of mosaic Reidemeister moves.
\end{conjecture}

\subsection{Mosaics from $\rv{2,4}$ graphs}

In order to later depict wild knots, we would like to expand the tile-set $\mathbb{T}$ of mosaics to allow for one additional tile, denoted $T_\infty = \blacksquare$. This tile will play the role of the rigid vertex of an $\rv{2,4}$ graph. As such, this square may have either $2$ or $4$ connection points. For simplicity, we only allow {\em transverse} $2$-connections, i.e. connections are either on the west and east sides or the north and south sides of the tile. We consider a $T_\infty$ tile to be {\em suitably connected} if it meets these criteria.

\begin{definition}[$\rv{2,4}$ Mosaics]\label{rv24def}
    For a positive integer $n$, we define the following:
    \begin{itemize}
        \item An \textbf{$\rv{2,4}$ $n$-mosaic} is an $n \times n$ matrix with entries from $\mathbb{T} \cup \{ T_\infty\}$. We let $\mathbb{M}_{\text{RV}}^{(n)}$ denote the set of $\rv{2,4}$ $n$-mosaics and $\mathbb{M}_{\text{RV}}=\cup_{n\in\mathbb{N}}\mathbb{M}^{(n)}_{\text{RV}}$.
        \item An \textbf{$\rv{2,4}$ knot $n$-mosaic} is a $\rv{2,4}$ $n$-mosaic in which all tiles are suitably connected to their adjacent tiles, no two $\blacksquare$ tiles are adjacent, no $\blacksquare$ tiles are along the boundary, and no connection points are on the boundary of the $n\times n$ square. We let $\mathbb{K}^{(n)}_{\text{RV}}$  denote the set of $\rv{2,4}$ knot $n$-mosaics and $\mathbb{K}_{\text{RV}}=\cup_{n\in\mathbb{N}}\mathbb{K}^{(n)}_{\text{RV}}$. 
         \item An \textbf{$\rv{2,4}$ tangle $n$-mosaic} is a $\rv{2,4}$ $n$-mosaic in which all tiles are suitably connected to their adjacent tiles and no two $\blacksquare$ tiles are adjacent, $\blacksquare$ tiles along the boundary of the mosaic are prohibited, {\em but connection points (from tiles in $\mathbb T$) along the boundary are allowed}. We let $\mathbb{L}^{(n)}_{\text{RV}}$ denote the set of $\rv{2,4}$ tangle $n$-mosaics and $\mathbb{L}_{\text{RV}}=\cup_{n\in\mathbb{N}}\mathbb{L}^{(n)}_{\text{RV}}$. Note: An {\bf $m$-tangle} in this setting refers to an element of $\mathbb{L}^{(n)}_{\text{RV}}$ with exactly $2m$ connection points along its boundary.
    \end{itemize}
\end{definition}

Note that this means $\mathbb K_{\text{RV}}^{(n)} \subset \mathbb L_{\text{RV}}^{(n)} \subset \mathbb M_{\text{RV}}^{(n)}$, similar to before.

\begin{remark}\label{rv-tangle-injections}
     Because we prohibit $\blacksquare$ tiles along the boundary of $\rv{2,4}$ tangles, the tangle injection from Definition \ref{tangle-injections} can be seen as a function on $\rv{2,4}$ tangle mosaics $\iota: \LL_{\text{RV}}^{(n)}\rightarrow \LL_{\text{RV}}^{(n+1)}$, without any ambiguity.
\end{remark}

Since we will be working most often with $\rv{2,4}$ tangle mosaics, we provide an example in Figure \ref{L-ex}.

\begin{figure}[H]
\includegraphics[width=.23\linewidth]{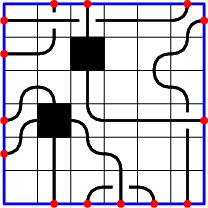}
\caption{A typical $\rv{2,4}$ tangle mosaic -- in this case, a $7$-tangle.}\label{L-ex}
\end{figure}

\begin{figure}
\centering
\includegraphics[scale=0.6]{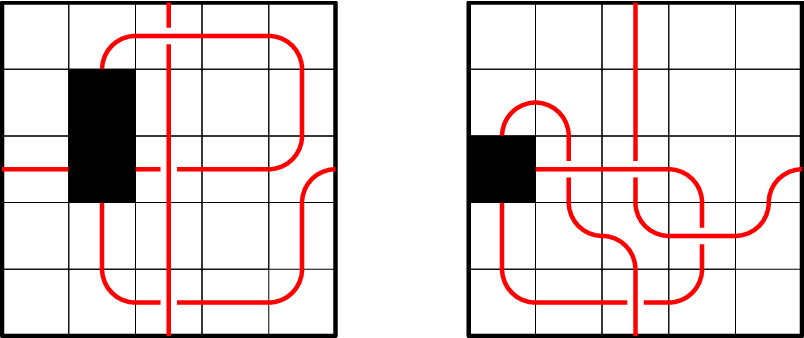}
\caption {Non-examples of tangle mosaics. These are forbidden because there are two adjacent $\blacksquare$ tiles (left) or a $\blacksquare$ tile is along the boundary (right).}
\end{figure}

\begin{remark}\label{tameRemark}
    Definition \ref{rv24def} implies that a $1\times 1$ comprised of a single $\blacksquare$ is not a valid $\rv{2,4}$ tangle mosaic. In other words, we require every $\rv{2,4}$ tangle mosaic to contain tiles in $\mathbb{T}$.
\end{remark}


If we consider the diagram of a given knot or tangle, we can use standard arguments to overlay a grid on the plane and associate a tile from $\mathbb{T}$ with each square to construct a mosaic. By considering an analogous argument for $\rv{2,4}$ knots and tangles, we can also associate squares containing rigid vertices with $\blacksquare$ tiles. Therefore, we record the following representability result.


\begin{proposition}[Representability of Tame $\rv{2,4}$ Graphs]\label{rv-rep}
    Every tame $\rv{2,4}$ graph $G$ admits an $\rv{2,4}$ mosaic representative $M \in \mathbb{M}_{\emph{RV}}^{(n)}$ for some $n \in \mathbb{N}$. If $G$ is closed (no boundary), then $M \in \mathbb{K}_{\emph{RV}}^{(n)}$; if $G$ has $2m$ boundary points, then $M \in \mathbb{L}_{\emph{RV}}^{(n)}$ with $2m$ connection points along the boundary.
\end{proposition}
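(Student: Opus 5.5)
The plan mirrors the proof of Proposition \ref{tangle-rep}, with extra care for the rigid vertices. Since $G$ is tame, I take a regular PL diagram $D$ of $G$ in a disk (or in the plane if $G$ is closed). By a small perturbation I can assume that no vertex lies on another edge or on a crossing, that every degree $4$ vertex is a transverse crossing-like point with four rays in the rigid cyclic order, and that every degree $2$ vertex lies in the interior of an edge with its two rays meeting straight through. If $G$ has boundary, its $2m$ endpoints lie on the boundary circle. I then apply a planar isotopy (straightening near each vertex) so that near every vertex the edges leave along the four compass directions for a degree $4$ vertex, and along one opposite pair (W--E or N--S) for a degree $2$ vertex. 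This step is legitimate because the rigid disk can be rotated in the plane. The result is the transverse condition the paper imposes on $\blacksquare$ tiles.

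Next I overlay a square grid fine enough that the following hold.
\begin{enumerate}
\item Each cell contains at most one local feature: a straight segment, a single turn, one crossing, or one vertex.
\item Cells containing vertices are centered on those vertices, so the emanating edges exit through edge midpoints.
\item No two vertex cells are adjacent. I can arrange this by refining, since the vertices are finitely many and pairwise separated.
\item No vertex cell lies on the outer boundary. I can arrange this by enlarging the square by a margin. When $G$ has boundary, I enlarge by a margin through which the $2m$ strands run straight to boundary midpoints.
\end{enumerate}
As in the tame case, a further small planar isotopy puts every strand crossing of a cell edge at its midpoint. I then assign $T_\infty$ to vertex cells and the appropriate tile of $\mathbb{T}$ to every other cell. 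Suitable connectivity follows from the midpoint condition. The prohibitions in Definition \ref{rv24def} hold by construction. For closed $G$ nothing reaches the boundary, so $M\in\mathbb{K}_{\text{RV}}^{(n)}$; otherwise $M\in\mathbb{L}_{\text{RV}}^{(n)}$ with exactly $2m$ boundary connection points.

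I expect the main obstacle to be making the grid alignment simultaneous. The vertex cells must be centered on the vertices while every edge still crosses grid lines only transversally at midpoints, and only one feature is allowed per cell. I would handle this in two stages. First, I isotope $D$ within small disjoint neighborhoods of the vertices and crossings to make it locally "grid-straight": axis-parallel segments of fixed length through each special point. Second, I place all special points on lattice points of a fine lattice, which is possible after a small planar isotopy. The remaining arcs between these special neighborhoods are then approximated by rectilinear staircase paths, each realized by straight tiles and turn tiles. Refining the lattice further if necessary, by taking a subdivision in which the special points sit at cell centers, separates any staircase turns that would otherwise collide.

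Because all the modifications are planar isotopies that fix the combinatorics of crossings and vertex cyclic orders, the mosaic represents the same rigid vertex graph as $G$.
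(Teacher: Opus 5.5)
Your proposal is correct and takes essentially the same approach as the paper's proof: choose a regular PL diagram, refine a grid until each cell holds at most one local feature, place $T_\infty$ tiles at the rigid-vertex cells, and use a small planar isotopy plus further refinement so that connection points match and no $T_\infty$ tile lies on the boundary. Your version is more explicit than the paper's. You center each vertex with compass-direction rays, you give the staircase approximation of the remaining arcs, and you explicitly arrange that no two $T_\infty$ tiles are adjacent, which Definition~\ref{rv24def} requires but the paper's proof leaves implicit.
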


\begin{proof}
Choose a regular PL diagram of $G$ in the square so that each rigid vertex projects to a distinct point and each edge segment is tame. Refine the square grid so that every cell contains at most one local feature: a tame strand segment, an ordinary crossing, a turn, a rigid vertex, or no part of the diagram. Put the corresponding tile from $\mathbb{T}$ in each tame cell and put a $T_\infty$ tile in each cell containing a rigid vertex, with the incident strand directions recording its degree-$2$ or degree-$4$ connection data. A small planar isotopy and further refinement ensure that adjacent tiles match and that no $T_\infty$ tile lies on the boundary. Thus the resulting mosaic is an $\rv{2,4}$ mosaic representative of $G$. If $G$ is closed then no boundary connection points occur, so the representative lies in $\KK_{\text{RV}}^{(n)}$; if $G$ has $2m$ boundary points, these become the $2m$ boundary connection points of an element of $\LL_{\text{RV}}^{(n)}$.
\end{proof}

Just as there are special Reidemeister moves governing interactions of knot and tangle strands with rigid vertices, we propose analogous moves describing equivalence in the $\rv{2,4}$ tangle mosaic setting. See Figure \ref{rv-mosaic-reid}.

\begin{figure}[H]
\includegraphics[width=\linewidth]{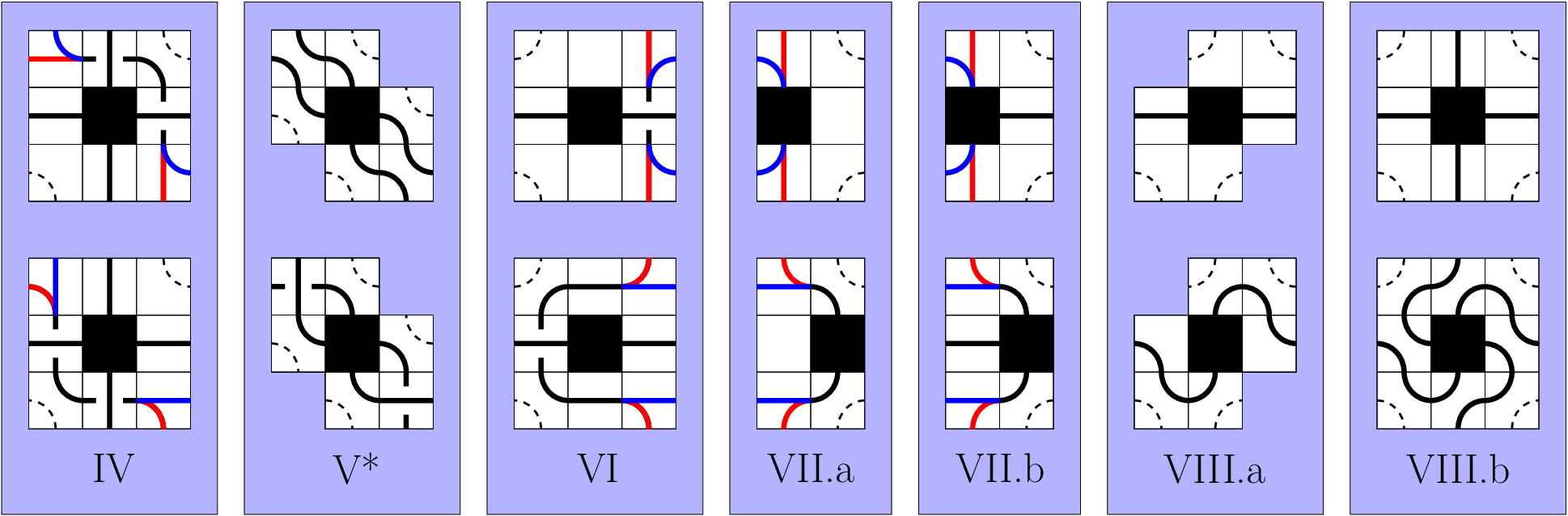}
\caption{The new $\rv{2,4}$ mosaic Reidemeister moves corresponding to IV, V*, and VI for $\rv{2,4}$ graphs, accounting for the $\blacksquare$ tile. We include the pictured moves as well as their mirror images and their rotations in our expanded set of mosaic equivalences.}\label{rv-mosaic-reid}
\end{figure}





\begin{lemma}[Boundary Adjustment]\label{bd_adjustment}
    Given any $1$- or $2$-tangle $\rv{2,4}$ mosaic, there is an equivalent odd-dimensional mosaic tangle such that:
    \begin{itemize}
        \item The boundaries of strands are in the center tile along the edges of the mosaic,
        \item For $1$-tangle $\rv{2,4}$, connection points of the mosaic occur on opposite sides.
    \end{itemize} 
\end{lemma}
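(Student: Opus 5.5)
The plan is to modify the mosaic only in a collar around its boundary. We use tangle injections (Definition \ref{tangle-injections}, applicable to $\rv{2,4}$ tangles by Remark \ref{rv-tangle-injections}) together with the planar isotopy moves of Figure \ref{mosaicpl}. Everything happens in a region containing only tiles from $\mathbb{T}$, so no move involving $\blacksquare$ tiles is needed. Since no $\blacksquare$ tile lies on the boundary, and all boundary strands are ordinary arcs, the outer frame can be rerouted freely.

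\textbf{Step 1 (room).} Starting from $L\in\LL^{(n)}_{\text{RV}}$ with $2m$ boundary points ($m\in\{1,2\}$), we first surround it by a blank collar. A tangle injection extends the strands exiting east and south by straight $\five$/$\six$ tiles. Applying the injection in rotated and reflected forms (equivalently, conjugating by the symmetries of the square, under which the move set is closed) extends the north and west sides as well. After enough injections, $L$ sits in a central block, and each boundary strand travels straight out through a corridor of width at least two to the edge of a mosaic of dimension $N$. We can choose $N$ odd by performing one extra injection if necessary.

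\textbf{Step 2 (rerouting).} Inside the collar, which contains only blank tiles and the straight exit corridors, we slide each exiting strand along its side toward the central tile of that side. We do this with the planar isotopy moves that push a turn across a blank tile (a pair of elbows $\two,\three,\ldots$ moving by one cell). For a $1$-tangle with both endpoints on the same side, or on adjacent sides, we carry one strand around a corner of the collar so that it leaves on the opposite side. Turning a corner is again a finite composition of elementary planar isotopies in the blank collar. For a $2$-tangle the four endpoints must end up at the four central edge tiles, one per side. Since the endpoints occur in some cyclic order around the square, we distribute them to the four sides by sliding them around the collar, preserving that cyclic order, so that no crossings are introduced. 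Moving endpoints along the boundary in this way is a planar isotopy of the underlying tangle that does not fix the endpoints pointwise. We regard it as allowed here, since the lemma only asks for an equivalent mosaic with endpoints in normalized position, which is the sense in which equivalence is intended.

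\textbf{Main obstacle.} The delicate point is Step 2's bookkeeping. We must check that each sliding strand can be realized by the specific mosaic planar isotopy moves while never colliding with another strand. Collisions are avoided if the collar is wide enough (width about $2m+1$) and strands are moved in an order dictated by the cyclic order. We would first make this rigorous for a single strand sliding one cell along a blank row bounded by a straight corridor, and then induct on distance and on the number of strands. A final injection restores odd dimension if it was lost, and the added $\five$/$\six$ tiles keep the exits at central edge tiles, because after re-centering the exits sit at the center row and column.
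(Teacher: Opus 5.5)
Your proposal follows the paper's argument: enlarge by tangle injections to an odd-dimensional mosaic, then isotope the boundary connection points to the central edge tiles (one per side for a $2$-tangle, opposite sides for a $1$-tangle), with your collar, corner-turning, and cyclic-order bookkeeping supplying details the paper leaves implicit. One correction to the mechanism: the moves of Figure~\ref{mosaicpl} replace a submosaic by one with the same boundary connection data, so they can never change which outer edge tile carries a connection point, and the re-centering step must be read as in the paper's ``planar isotop the connection points,'' namely as an isotopy that moves the endpoints followed by re-tiling (the looser equivalence the lemma intends); relatedly, your closing ``final injection'' is never needed since Step~2 preserves the dimension, and a one-sided injection would not preserve centering anyway.
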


\begin{proof}
    Use $\rv{2,4}$ tangle injections (see Remark \ref{rv-tangle-injections}) to map to a larger, equivalent, odd-dimensional mosaic. Then, planar isotop the connection points to be in the center tiles of each side. For a $1$-tangle $\rv{2,4}$ mosaic, ensure that the two connection points are isotoped to opposite sides.
\end{proof}

Note that these boundary-adjusted $1$- or $2$-tangle $\rv{2,4}$ mosaics thus have the same connection points assigned to $T_\infty = \blacksquare$.

\begin{figure}[H]
\includegraphics[width=0.7\linewidth]{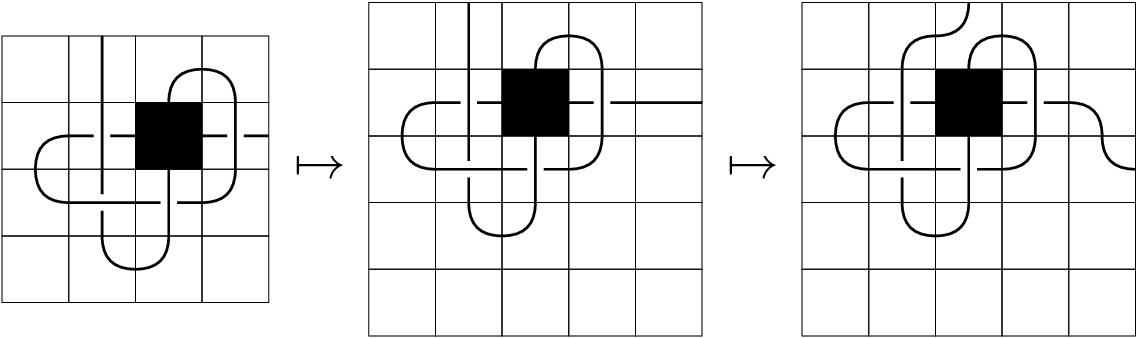}
\caption{Boundary adjustment of a $1$-tangle $\rv{2,4}$ mosaic.}\label{adjust}
\end{figure}

\begin{definition}[$\rv{2,4}$ Tangle Mosaic Equivalence]\label{tangequiv}
    We call two $\rv{2,4}$ mosaics $M$ and $M'$ {\bf boundary equivalent}, denoted $M\underset{b}{\sim}M'$, if one is a boundary adjustment of the other. Furthermore, two $\rv{2,4}$ mosaic tangles $M$ and $N$ are {\bf $\rv{2,4}$ tangle mosaic equivalent} if there exists $M\underset{b}{\sim}M'$ and $N\underset{b}{\sim}N'$ such that $M'$ and $N'$ are related by a finite sequence of mosaic $\rv{2,4}$ moves. 
\end{definition}

Note that, according to the notation in Definition \ref{tangequiv}, boundary-equivalent $\rv{2,4}$ mosaic tangles $M$ and $M'$ may (and typically do) have different mosaic dimensions. Similarly, equivalent mosaic tangles $M$ and $N$ may have different mosaic dimensions. However, $M'$ and $N'$ are assumed to have the same mosaic dimension. Given this new notion of equivalence, we generalize Conjecture \ref{conj:lk-tangles} to $\rv{2,4}$ tangle mosaics.

\begin{conjecture}[Topological Equivalence Conjecture for $\rv{2,4}$ tangles]\label{lk-conj-rvtangle}
Let $l_1$ and $l_2$ be two $\rv{2,4}$ tangles, and let $L_1$ and $L_2$ be two arbitrarily chosen $\rv{2,4}$ tangle $n$-mosaic representatives of $l_1$ and $l_2$, respectively, with the same connection points along their boundaries. Then $l_1$ and $l_2$ are ambient isotopic if and only if the representative mosaics $L_1$ and $L_2$ are $\rv{2,4}$ tangle mosaic equivalent.
\end{conjecture}

Note that Proposition \ref{rv-rep} establishes that every tame $\rv{2,4}$ graph admits an $\rv{2,4}$ mosaic representative, providing the ``representability'' direction implicit in the conjecture above. Additionally, although the forward implication remains open, we record an ambient-isotopy form of the backward implication for later use.


\begin{proposition}\label{prop:lk-easy-rv}
Let $L_1$ and $L_2$ be $\rv{2,4}$ tangle $n$-mosaics related by a finite sequence of $\rv{2,4}$ mosaic Reidemeister moves. Then there exists an ambient isotopy $\tilde{h} \colon [0,n]^2 \times [0,1] \to [0,n]^2 \times [0,1]$ taking the tame-tile arcs of $L_1$ to those of $L_2$, and the underlying $\rv{2,4}$ tangles are ambient isotopic.
\end{proposition}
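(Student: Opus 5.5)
The plan is to reduce to a single move and then compose. By induction on the length of the sequence of $\rv{2,4}$ mosaic Reidemeister moves relating $L_1$ to $L_2$, it suffices to treat the case where $L_2$ is obtained from $L_1$ by one move. Composing finitely many ambient isotopies of $[0,n]^2\times[0,1]$ gives an ambient isotopy, so the general case follows. Throughout I will use the geometric realization from the remark after Proposition~\ref{tangle-rep}. Each tile of $\mathbb{T}$ is realized by its standard local picture in the cube $[i-1,i]\times[j-1,j]\times[0,1]$, with arcs meeting the vertical faces only at the midpoint connection points at height $1/2$. Each $\blacksquare$ tile is realized as a small flat rigid disk (degree $4$) or a rigid vertex point on a straight arc (degree $2$) in the center of its cube, joined by straight segments to its connection points.

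Now fix one move. It replaces a $k\times k$ submosaic $S$ by $S'$, where both are suitably connected with the same boundary connection points, and all other tiles are unchanged. Let $C = R\times[0,1]$ be the sub-cube over the square region $R$ of the move. I will produce an ambient isotopy $h_t$ of $C$ that is the identity on $\partial C$ and carries the realization of $S$ to that of $S'$. Extending $h_t$ by the identity outside $C$ then gives the required $\tilde h$. The moves split into two families.

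\emph{Moves from $\mathbb{T}$ only.} These are the planar isotopy moves and the mosaic Reidemeister 1, 2, 3 moves of Figures~\ref{mosaicpl}, \ref{mosaic12}, \ref{mosaic3}. In each case the realizations of $S$ and $S'$ are tame tangles in the ball $C$ with the same endpoints on $\partial C$. By inspection, their diagrams differ by a planar isotopy or a single classical Reidemeister move rel boundary. The standard proof that Reidemeister moves are realized by ambient isotopies supported in a ball, for example via a triangle move as in the discussion of elementary combinatorial isotopies, then yields $h_t$ fixing $\partial C$.

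\emph{Moves involving $\blacksquare$.} These are the moves of Figure~\ref{rv-mosaic-reid} and their mirror images and rotations. Here the realizations of $S$ and $S'$ are $\rv{2,4}$ tangles in $C$ whose diagrams differ by exactly one of Kauffman's moves IV or V*, or by move VI, together with planar isotopy. For IV and V*, Kauffman's theorem \cite{kauffman-graphs} shows that these moves are realized by ambient isotopies of rigid vertex graphs: a strand passes over or under a rigid disk, or the disk is twisted by $\pi$ about an axis in its plane. Both are supported in a neighbourhood of the disk and the adjacent strand segments, and I will choose that neighbourhood inside the interior of $C$. For VI, the earlier discussion shows it is an elementary combinatorial isotopy, realized by a triangle move inside $C$. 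By Graeub \cite{graeub1950semilinearen}, such a combinatorial isotopy is an ambient PL isotopy, and it can be taken to be supported in a small neighbourhood of the triangle, again inside $C$.

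The resulting $\tilde h$ respects rigid vertices, so the underlying $\rv{2,4}$ tangles are ambient isotopic, and in particular the tame-tile arcs are carried to one another. The step I expect to be the main obstacle is the case check that each pictured mosaic move really corresponds to exactly one classical or rigid-vertex move rel boundary. The most delicate part is the V* twist, since the cyclic order of the four connection points must be preserved by the rigid disk. To handle it I will treat each figure in turn, note that rotations and mirrors are handled by conjugating with the corresponding symmetry of $C$, and pin the disk's rotation axis to the line through two opposite connection points.
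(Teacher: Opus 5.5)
Your proposal is correct and follows the same route as the paper: realize each single move by a PL ambient isotopy supported on the union of the affected tile sub-cubes and fixed on its boundary, extend by the identity, and concatenate. Your split into tame moves and rigid-vertex moves (via Kauffman's moves and Graeub's theorem) fills in what the paper's two-line proof leaves implicit. One small correction for your case check: for a $\blacksquare$ with N, E, S, W connections, the half-turn realizing the $V^*$ twist should be about the diagonal axis bisecting two adjacent connection points (the paper's generator $f$, reflection across the NW--SE diagonal), not about the line through two opposite connection points, which would swap two opposite strands instead of moving a crossing from one adjacent pair of strands to the other.
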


\begin{proof}
This is the backward implication of Conjecture~\ref{lk-conj-rvtangle}: each elementary $\rv{2,4}$ mosaic Reidemeister move is supported on a union of tile sub-cubes in $[0,n]^2 \times [0,1]$ and realizes a PL ambient isotopy of that sub-region rel boundary. Concatenating finitely many such isotopies yields~$\tilde{h}$.
\end{proof}

\section{Tree Mosaic Knots}

\subsection{Mosaic Embeddings}

Let us begin with some important technical details that will enable us to define our main object of study, the tree mosaic, a discretized representation of a class of wild knots. We first define a way to take an $n$-dimensional mosaic and replace it with a topologically equivalent $pn$-dimensional mosaic for some natural number $n$ and odd natural number $p$. This will enable us to rigorously describe a process whereby we insert a tangle $p$-mosaic into a $\blacksquare$ tile of an $\rv{2,4}$ knot or tangle $n$-mosaic.









For our first definition of this section, we encourage the reader to refer to the example in Figure \ref{pzoom} to get an intuitive idea of what the definition is formalizing.

\begin{definition}
    For $p = 2q -1$ odd, the \textbf{$p$-zoom} is a map
    $$z_p: \mathbb{M}^{(n)}_{\text{RV}} \longrightarrow \mathbb{M}_{\text{RV}}^{(pn)}$$
    
    where the $ij$-th tile of a mosaic $M^{(n)}$ determines a submosaic in $N^{(pn)}$ for  $p(i-1)< k \leq pi$ and $p(j-1)< \ell \leq pj$. Let $D(M^{(n)}_{ij}) \subseteq \{N,S,E,W\}$ denote the sides of $M^{(n)}_{ij}$ with connection points, using the specified connection data when $M^{(n)}_{ij}=\blacksquare$. Then
    
    \begin{equation*}
    N^{(pn)}_{k \ell} = 
        \left\{
        \begin{array}{lr}
            M^{(n)}_{ij}, & \text{if } \ k = p(i-1)+q \\
            & \text{ and } \ \ell = p(j-1)+q\\
            \\
            \five, & \text{if } \ k = p(i-1)+q,\ \ell > p(j-1)+q,\ \text{and } E \in D(M^{(n)}_{ij})\\
            \\
            \five, & \text{if } \ k = p(i-1)+q,\ \ell < p(j-1)+q,\ \text{and } W \in D(M^{(n)}_{ij})\\
            \\
            \six, & \text{if } \ \ell = p(j-1)+q,\ k > p(i-1)+q,\ \text{and } S \in D(M^{(n)}_{ij})\\
            \\
            \six, & \text{if } \ \ell = p(j-1)+q,\ k < p(i-1)+q,\ \text{and } N \in D(M^{(n)}_{ij})\\
            \\
            \zero , & \text{otherwise.}
        \end{array}
        \right.
    \end{equation*}



\end{definition}


\begin{figure}[H]
\includegraphics[width=.8\linewidth]{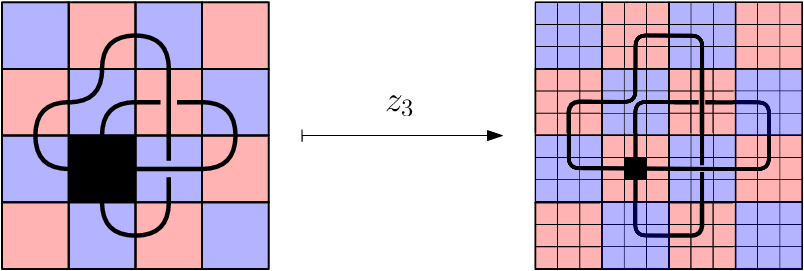}
\caption{The $3$-zoom map applied to an $\rv{2,4}$ mosaic.}\label{pzoom}
\end{figure}

For this next definition, we show how a tangle mosaic $M$ can be inserted into another (knot or tangle) mosaic  $N$. As the reader considers this definition, we recommend looking at the example in Figure \ref{embedfigure}.

\begin{definition}\label{embed}
    Let $m$ be odd and $(M,N)\in \LL_{\text{RV}}^{(m)} \times \MM_{\text{RV}}^{(n)}$ and suppose that the tile in the $(i, j)$ position of $N$ is a  $\blacksquare$ tile. Then the \textbf{fundamental embedding of mosaics} $x_{ij}$ is the map
    $$x_{ij}: (M,N)\in \LL_{\text{RV}}^{(m)} \times \MM_{\text{RV}}^{(n)} \longrightarrow \MM_{\text{RV}}^{(mn)}$$
   where $N' = z_m(N)$ is the $m$-zoom of $N$ and, for $1 \leq t,u \leq m$, a pair $(M,N)$ is sent to mosaic $W^{(mn)}$ given by
    $$W^{(mn)}_{k\ell} = \begin{cases}
        M_{tu} &\text{if  $(k,\ell) = (m(i-1)+t,m(j-1)+u)$} \\
        N'_{k\ell} &\text{otherwise}.
    \end{cases}$$

    Note that we can similarly define an embedding of mosaic tangles into mosaic {\em tangles} (rather than into mosaic knots), i.e. $(M,N)\in \LL_{\text{RV}}^{(m)} \times \LL_{\text{RV}}^{(n)}$.

    In either case, we refer to $W^{(mn)}$ as the \textbf{insertion} of $M$ into $N$.

    More generally, an embedding of mosaics can take many forms, and may not be an explicit embedding into a particular $(i,j)$th entry (e.g.\ the embedded mosaic may be flipped, or padded with a boundary adjustment, etc.); the formal framework is given in Definitions~\ref{def:D4-action} and~\ref{def:oriented-embed} below. The location and orientation of the embedding will be clear from context. We denote the space of all mosaic embeddings $\textsf{Hom}_\text{RV}$.
\end{definition}

\begin{figure}[H]
\includegraphics[width=.7\linewidth]{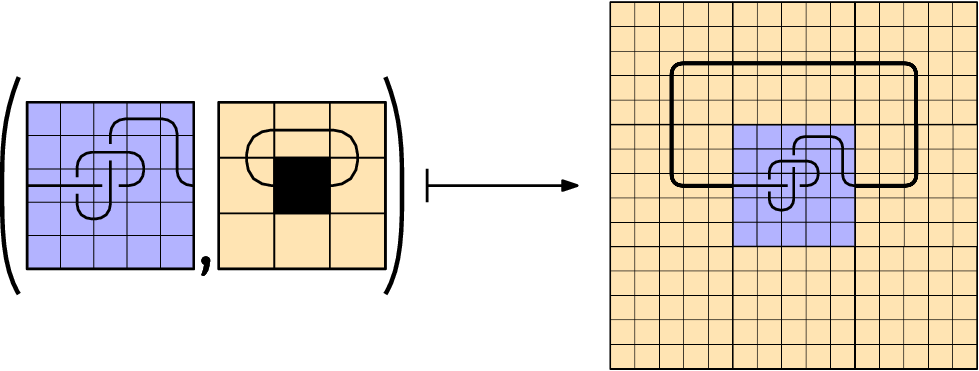}
\caption{An example of the mosaic embedding $x_{22}: \LL_{\text{RV}}^{(5)} \times \MM_{\text{RV}}^{(3)}\longrightarrow \MM_{\text{RV}}^{(15)}$.}\label{embedfigure}
\end{figure}

Notice that $\rv{2,4}$ mosaic moves $V^*$, $VIII.a$, and $VIII.b$ from Figure \ref{rv-mosaic-reid} flip and rotate a $\blacksquare$ rigid vertex. To later account for these moves at the tree mosaic level, we formalize the symmetries of the square as a group action on the space of $\rv{2,4}$ mosaics.

\begin{definition}[Dihedral Group $D_4$]\label{def:D4}
    The \textbf{dihedral group of the square} is the group
    \[
        D_4 = \langle\, r,\, f \mid r^4 = f^2 = e,\; frf^{-1} = r^{-1}\,\rangle,
    \]
    where $r$ denotes counterclockwise rotation by $90^\circ$ and $f$ denotes reflection across the NW--SE diagonal.
\end{definition}

\begin{definition}[$D_4$-Action on $\MM_\text{RV}$]\label{def:D4-action}
    We define a left action of $D_4$ on $\MM_{\text{RV}}^{(m)}$ by specifying the action of the generators $r$ and $f$ on an $m$-mosaic $M$. Set $k = m + 1 - j$. Then:
\begin{multicols}{2}
    $$(f \cdot M)_{ij} = \begin{cases}
        \one &\text{if }M_{ji} =  \three\\
        \three &\text{if }M_{ji} =  \one\\
        \five &\text{if }M_{ji} = \six \\
        \six &\text{if }M_{ji} =  \five \\
        M_{ji} &\text{otherwise}\\
    \end{cases}$$
\columnbreak
    $$(r \cdot M)_{ij} = \begin{cases}
        \one &\text{if }M_{ki} =  \two\\
        \two &\text{if }M_{ki} =  \three\\
        \three &\text{if }M_{ki} =  \four\\
        \four &\text{if }M_{ki} =  \one\\
        \five &\text{if }M_{ki} =  \six\\
        \six &\text{if }M_{ki} =  \five\\
        \seven &\text{if }M_{ki} =  \eight\\
        \eight &\text{if }M_{ki} =  \seven\\
        \nine &\text{if }M_{ki} =  \ten\\
        \ten &\text{if }M_{ki} =  \nine\\
        M_{ki} &\text{if }M_{ki} = \blacksquare,\zero \\
    \end{cases}$$
\end{multicols}
    That is, $f$ acts by NW--SE diagonal reflection (transposing indices and swapping tiles that exchange N/S and E/W connections) and $r$ acts by counterclockwise $90^\circ$ rotation. Since $r$ and $f$ satisfy the relations of $D_4$, this extends to a well-defined action of $D_4$ on $\MM_{\text{RV}}^{(m)}$. Moreover, since each generator preserves suitably-connectedness, the action restricts to the subspaces $\KK_{\text{RV}}^{(m)}$ and $\LL_{\text{RV}}^{(m)}$.
\end{definition}

\begin{definition}[Oriented Embeddings]\label{def:oriented-embed}
    For $\sigma \in D_4$ and a mosaic embedding $x_{ij}$ as in Definition \ref{embed}, the \textbf{$\sigma$-oriented embedding} is the map
    \[
        x_{ij}^\sigma(M, N) = x_{ij}(\sigma \cdot M,\, N).
    \]
    The fundamental embedding $x_{ij}$ of Definition~\ref{embed} is the special case $\sigma = e$, the identity element of $D_4$. In particular, $x_{ij}^r$ and $x_{ij}^f$ are the \textbf{rotated embedding} and \textbf{flipped embedding}, respectively. More generally, $\textsf{Hom}_\text{RV}$ includes all $\sigma$-oriented embeddings $x_{ij}^\sigma$ for $\sigma \in D_4$.
\end{definition}

\begin{figure}[t]
    \centering
    \captionsetup[subfigure]{labelformat=empty}
    \begin{subfigure}[b]{0.2\textwidth}
        \centering
        \includegraphics[width=\textwidth]{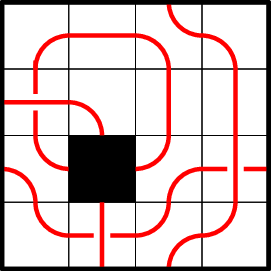}
        \caption{$M$}
    \end{subfigure}
    \hfill
    \begin{subfigure}[b]{0.2\textwidth}
        \centering
        \includegraphics[width=\textwidth]{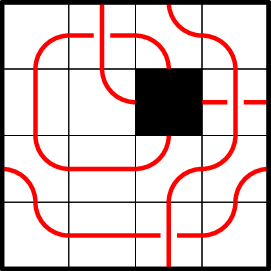}
        \caption{$f \cdot M$}
    \end{subfigure}
    \hfill
    \begin{subfigure}[b]{0.2\textwidth}
        \centering
        \includegraphics[width=\textwidth]{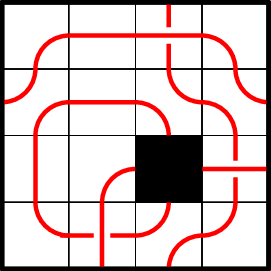}
        \caption{$r \cdot M$}
    \end{subfigure}
    \hfill
    \begin{subfigure}[b]{0.2\textwidth}
        \centering
        \includegraphics[width=\textwidth]{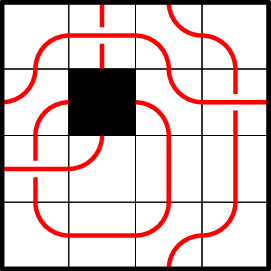}
        \caption{$r \cdot (f \cdot M)$}
    \end{subfigure}
    \caption{The $D_4$-action on an $\rv{2,4}$ mosaic $M$: identity, reflection $f$, rotation $r$, and their composition $rf$.}
\end{figure}

We now proceed to assign $\rv{2,4}$ mosaics to the vertices of a rooted tree with labeling on the edges dictating the procedure of insertion. In particular, rather than limiting ourselves to just finite trees (in which case we retain the theory of expressing tame knots, cf.\ Proposition \ref{geometric-realization-prop}), we allow infinitely many vertices in our tree. 

\subsection{Definition of a Tree Mosaic}

A \textbf{graph} $G$ consists of a set $V(G)$ of \textbf{vertices} and a family $E(G)$ of unordered pairs of elements of $V(G)$ called \textbf{edges}. If $V(G)$ and $E(G)$ are both countably infinite, then $G$ is a \textbf{countable graph}. The \textbf{degree} of a vertex $v \in V(G)$ is the cardinality of the set of edges incident to $v$, denoted by $\deg(v)$, and may be finite or infinite. 

An infinite graph is \textbf{locally finite} if each of its vertices has finite degree. For a locally finite graph $G$ and vertex $v \in V(G)$, we use $N(v)\subseteq V(G)$ to denote the neighbors of the vertex $v$ in $G$. Finally, if $G$ is a tree, then we may distinguish a vertex $v \in V(G)$ as being a \textbf{root}, and there exists a unique path from every vertex of $G$ to $v$. Therefore if $G$ is \textbf{rooted} at a vertex $v$, there exists a unique orientation on all of its edges pointing ultimately to $v$, hence we may consider it as an oriented graph. For two neighboring vertices on an oriented tree, the vertex pointed to is called the {\bf parent} while the other is called the {\bf child}. Notice, a vertex may have at most one parent and, if the tree is locally finite, it may have finitely many children. In the mosaic data below, we write an edge as $(u,w)$ with the parent $u$ listed first and the child $w$ listed second.

The following are the main definitions of our paper. Definition \ref{treemosaic} gives a general description of tree mosaics, while Definition \ref{suitable_tree} is the refinement we will use to characterize wild knots.

\begin{definition} \label{treemosaic}
A \textbf{tree mosaic} $M$ is a labeled rooted tree $T$, where
\begin{itemize}
    \item $T=(V,E)$ is a countable locally-finite rooted tree,
    \item $F: V \to \mathbb{M}_\text{RV}$ associates an $\rv{2,4}$-mosaic to every vertex, and
    \item $i: E \to \textsf{Hom}_{\text{RV}}$ associates an oriented embedding of mosaics (cf. Def. \ref{def:oriented-embed})  to every edge.
\end{itemize}

We say that a tree mosaic is \textbf{well-defined} if, for every vertex $v$, there is an order-preserving bijection between the $\blacksquare$ tiles of $F(v)$ and the children of $v$, and if the embedding $i(v,w)$ targets the $\blacksquare$ tile corresponding to the child $w$ under this bijection.
\end{definition}


\begin{definition} \label{suitable_tree}
    A well-defined tree mosaic $W=(T, F, i)$ is said to be \textbf{suitably connected} if 
    \begin{itemize}
        \item the root of the tree is assigned an $\rv{2,4}$ knot mosaic (from $\KK_{\text{RV}}$), 
        \item all subsequent vertices are assigned $\rv{2,4}$ tangle mosaics (from $\LL_{\text{RV}}$) of odd dimension at least $3$, and
        \item for every edge $(v,w) \in E(T)$, if $i(v,w)$ targets the $T_\infty$ tile of $F(v)$ at position $(a,b)$, then the boundary connection points of $F(w)$ match the number, sides, and strand directions of the connection points of that $T_\infty$ tile.
    \end{itemize}
    
The well-defined criterion ensures that every $\blacksquare$ tile is associated with embedding one of its mosaic children, and prevents having too many such squares. The suitably-connected definition ensures that the $\rv{2,4}$ mosaics are themselves connected, and connect properly when they are embedded into each other. From now on, we assume that all tree mosaics are well-defined and suitably connected, unless otherwise stated.
\end{definition}

\begin{definition}\label{subtree}
    If $\tilde{F}$ and $\tilde{i}$ are the restrictions of $F$ and $i$ to some subtree $\tilde{T}\subset T$, then $\tilde{M}=(\tilde{T},\tilde{F},\tilde{i})$ is called a \textbf{subtree mosaic} of $M$.

    We may restate this as: all definitions of a tree mosaic apply, except that the root of the tree is assigned an $\rv{2,4}$ tangle mosaic (from $\LL_{\text{RV}}$), instead of an $\rv{2,4}$ knot mosaic (from $\KK_{\text{RV}}$).
\end{definition}


\begin{figure}[t]
    \centering
    \newcommand{\TreeMosaicPanelWidth}{0.12\textwidth}
    \newcommand{\TreeMosaicPanelSep}{0.08\textwidth}
    \captionsetup[subfigure]{}

    \begin{subfigure}[b]{\TreeMosaicPanelWidth}
        \centering
        \includegraphics[angle=-90,width=0.85\textwidth]{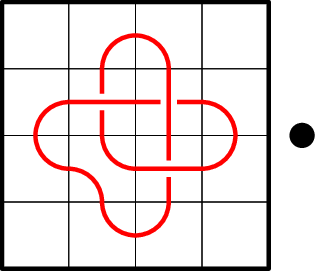}
        \caption{Trefoil mosaic on a single vertex}
    \end{subfigure}
    \hspace{\TreeMosaicPanelSep}
    \begin{subfigure}[b]{\TreeMosaicPanelWidth}
        \centering
        \includegraphics[angle=-90,width=\textwidth]{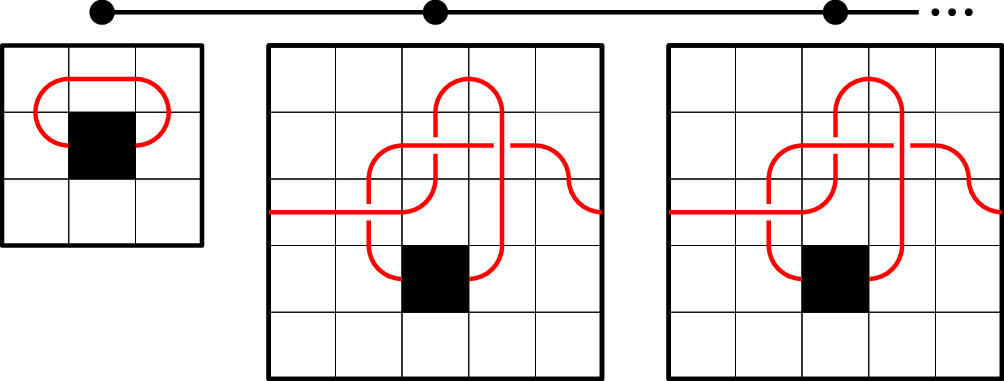}
        \caption{Connect sum of trefoils}
    \end{subfigure}
    \hspace{\TreeMosaicPanelSep}
    \begin{subfigure}[b]{\TreeMosaicPanelWidth}
        \centering
        \includegraphics[angle=-90,width=\textwidth]{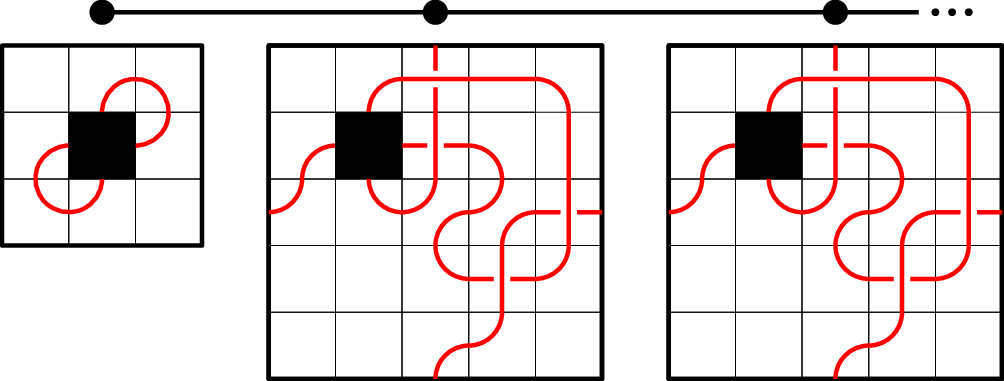}
        \caption{Infinite slipknot from Figure \ref{slip}}
    \end{subfigure}
    \hspace{\TreeMosaicPanelSep}
    \begin{subfigure}[b]{\TreeMosaicPanelWidth}
        \centering
        \includegraphics[angle=-90,width=\textwidth]{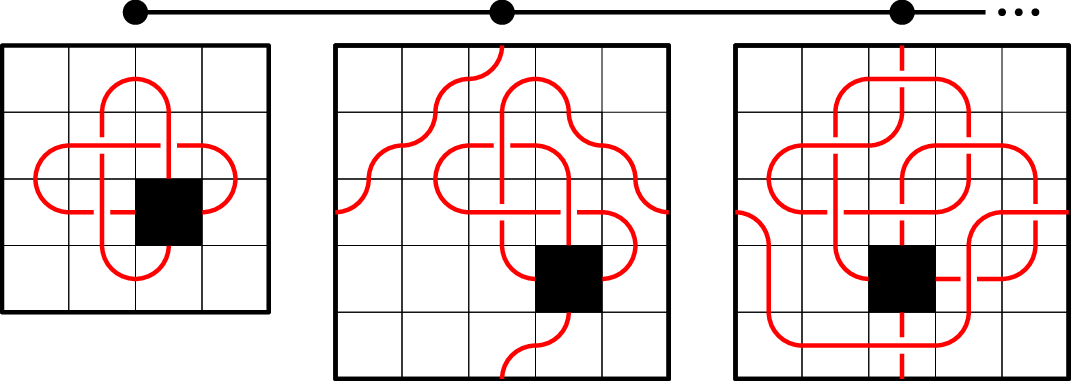}
        \caption{Non-repeating tree mosaic}
    \end{subfigure}

    \caption{Examples of tree mosaics. ({\tiny A}) captures the usual notion of a knot mosaic. ({\tiny B}) and ({\tiny C}) demonstrate our ability to describe classical wild knot constructions. ({\tiny D}) shows that the tangles associated with each vertex need not be constant.}
\end{figure}



\begin{remark}
    Note that above we may actually replace $\MM_{\text{RV}}$ with subsets of $\mathbb{R}^3$ (or, with more care, various other topological spaces) with some boundary conditions for connectedness later. Then this framework can be used to study recursive topological spaces that exhibit some degree of self-similarity.
\end{remark}

\begin{definition}[Tile realization] \label{tile-realization}
For each tile $T_k \in \mathbb{T} = \{T_0, \ldots, T_{10}\}$, define the \textbf{arc domain}:
$$A(T_k) = \begin{cases}
    \varnothing & k = 0 \\
    [0,1] & k \in \{1, \ldots, 6\} \\
    [0,1] \sqcup [0,1] & k \in \{7, \ldots, 10\}
\end{cases}$$
Each tile has a canonical PL embedding $\varphi_k: A(T_k) \to [0,1]^3$ realizing its arcs within the unit cube. The four possible \textbf{connection points} are the face centers: $c_N = (\frac{1}{2}, 1, \frac{1}{2})$, $c_S = (\frac{1}{2}, 0, \frac{1}{2})$, $c_E = (1, \frac{1}{2}, \frac{1}{2})$, $c_W = (0, \frac{1}{2}, \frac{1}{2})$. Each arc in $\varphi_k(A(T_k))$ is a PL path connecting two of these face centers according to the tile's combinatorial type, with the interior of the arc contained in $(0,1)^3$.

For the $T_\infty$ tile, the arc domain depends on the degree of the rigid vertex: $A(T_\infty) = [0,1]$ for degree $2$ (a transverse pair of lines connecting opposite faces), and $A(T_\infty) = [0,1] \sqcup [0,1]$ for degree $4$ (two lines, one connecting $c_N$ to $c_S$ and one connecting $c_E$ to $c_W$).
\end{definition}

\begin{figure}[H] \label{tile-representation}
\includegraphics[width=.5\linewidth]{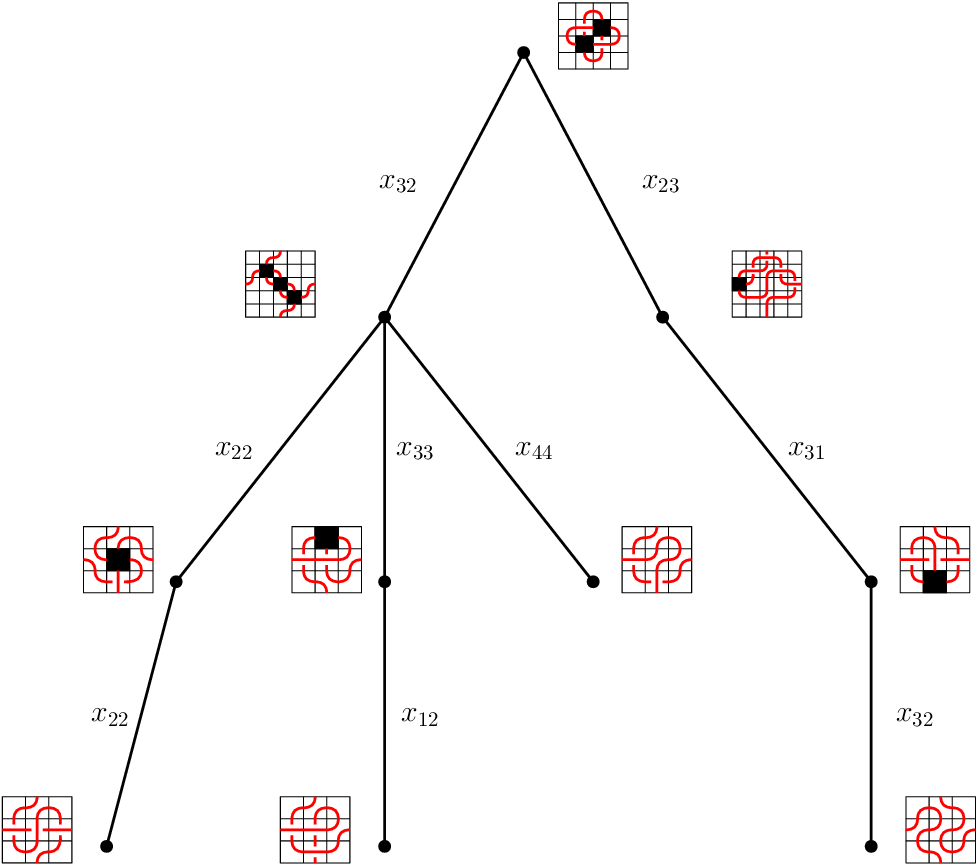} \qquad \qquad
\includegraphics[width=.35\linewidth]{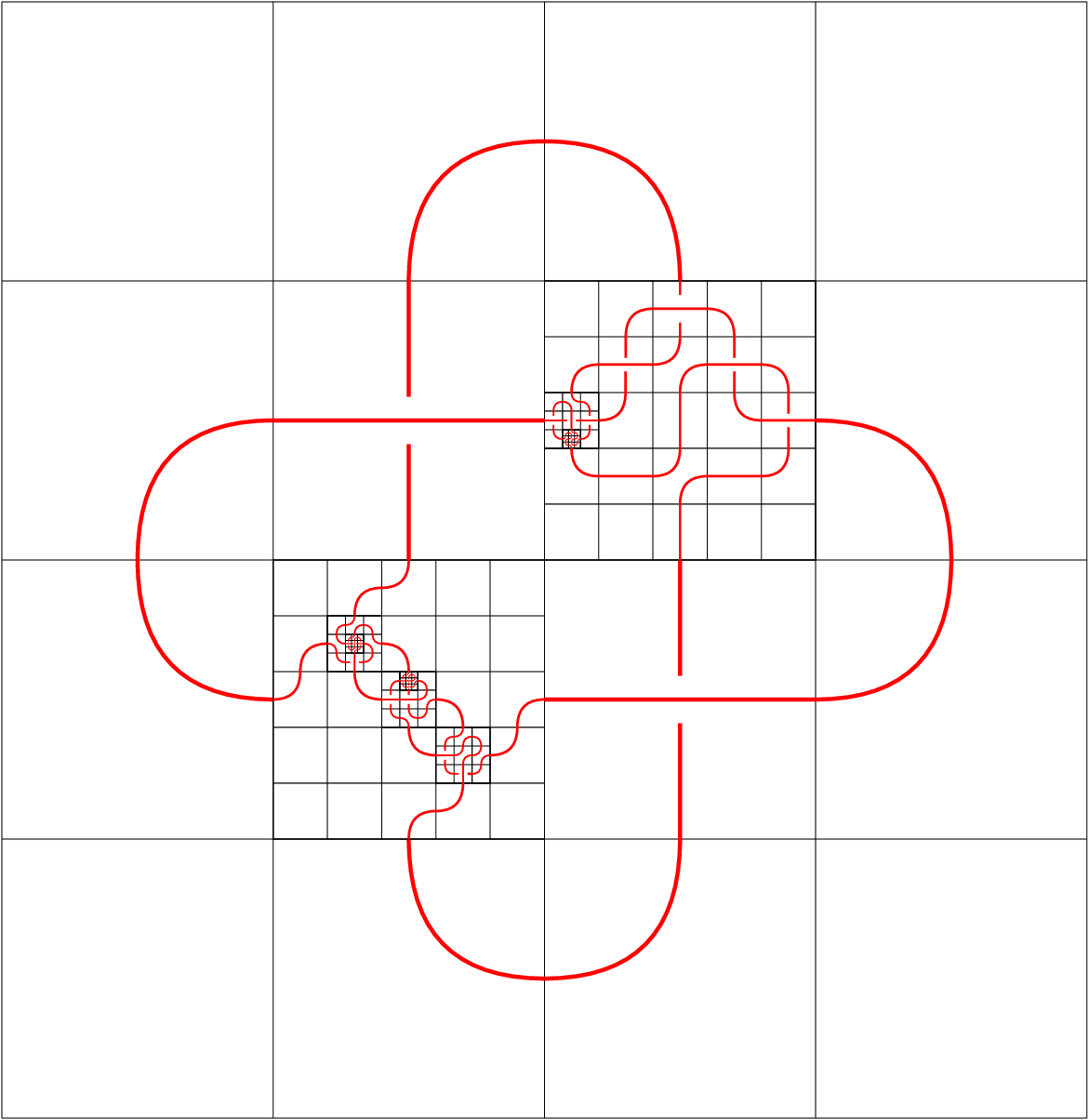} 
\caption{We can create a visual for the tree mosaic structure by drawing the mosaic embeddings directly. The right-hand picture may be interpreted as a ``top-down view" of the tree.}
\end{figure}

\begin{definition}[Geometric realization of a mosaic] \label{mosaic-realization}
Let $M \in \mathbb{M}^{(n)}$ be a suitably connected $n$-mosaic. The \textbf{geometric realization} $|M| \subset \mathbb{R}^3$ is the embedded $1$-complex obtained by placing tile $M_{ij}$ in the region $[i-1, i] \times [j-1, j] \times [0,1]$:
$$|M| = \bigcup_{i,j=1}^{n} \tau_{(i-1, j-1, 0)}\big(\varphi_{M_{ij}}(A(M_{ij}))\big)$$
where $\tau_{\mathbf{v}}: \mathbb{R}^3 \to \mathbb{R}^3$ denotes translation by $\mathbf{v}$. Since $M$ is suitably connected, adjacent tiles share connection points, ensuring arcs join continuously across tile boundaries. Hence $|M| \subset [0,n]^2 \times [0,1]$ is a finite union of PL arcs.

Similarly, for $\rv{2,4}$ mosaics $M \in \mathbb{M}_{\text{RV}}^{(n)}$, each $T_\infty$ tile contributes its line(s) meeting at a \textbf{singular transverse crossing} at the cube center $(\frac{1}{2}, \frac{1}{2}, \frac{1}{2})$: for degree $4$, two lines cross transversally at this point; for degree $2$, a single line passes through, marked as a rigid vertex.
\end{definition}

\begin{figure}[h]
    \centering
    \captionsetup[subfigure]{labelformat=empty}
    \begin{subfigure}[b]{0.6\textwidth}
         \includegraphics[width=\textwidth]{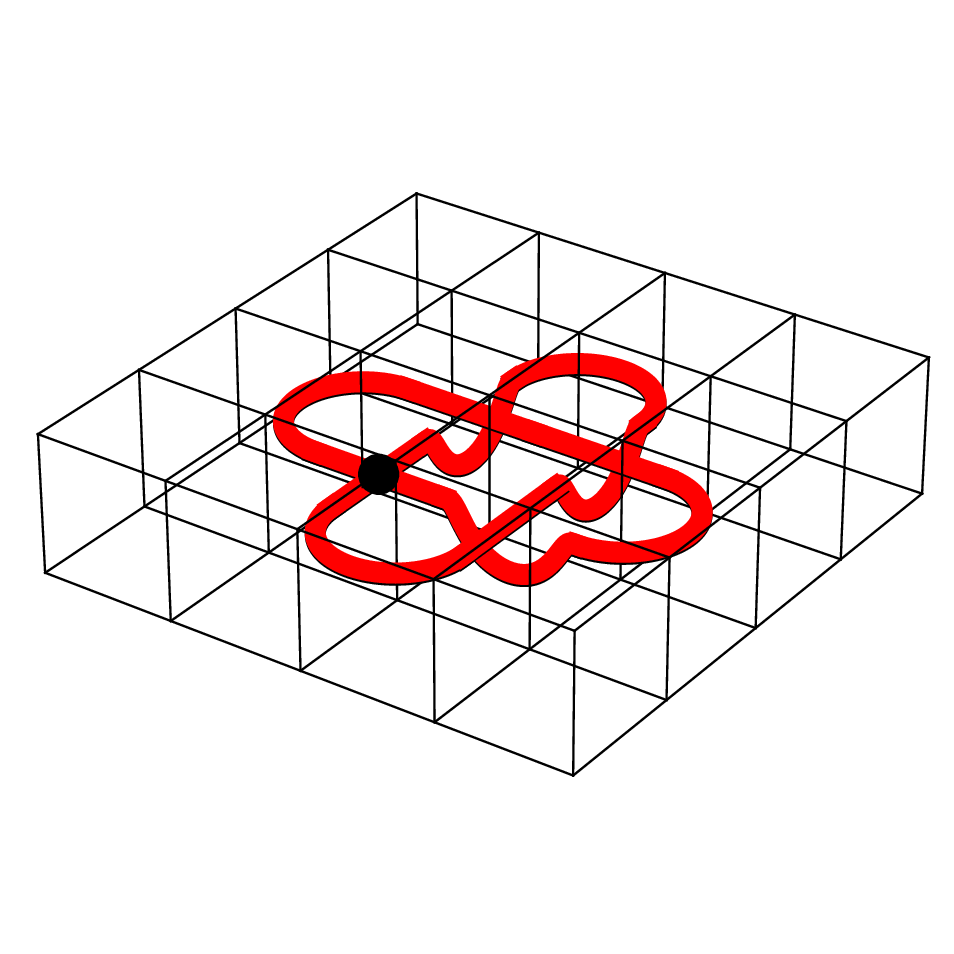}
        \vspace{-0.85in}
    \end{subfigure}
    \hfill
    \begin{subfigure}[b]{0.35\textwidth}
        \includegraphics[width=0.9\textwidth]{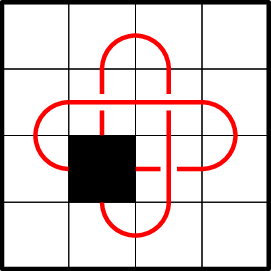}
        \vspace{0.4in}
    \end{subfigure}

    \caption{The geometric realization of a mosaic can be visualized as extending the mosaic tiles into cubes}
\end{figure}

\begin{definition}[Geometric realization of a tree mosaic] \label{tree-mosaic-realization}
Let $\mathcal{M} = (T, F, i)$ be a well-defined, suitably connected tree mosaic with root $r$. For each vertex $v \in V(T)$, let $\gamma_v = (r = v_0, v_1, \ldots, v_k = v)$ denote the unique path from the root to $v$, and let $d(v) = k$ denote the depth of $v$. Define:
\begin{enumerate}
    \item \textbf{Scale factor:} $\sigma: V(T) \to \mathbb{Q}_{>0}$ by $\sigma(r) = 1$, and if $v$ is a child of $u$ via edge $e$, then $\sigma(v) = \sigma(u) / \dim F(v)$. Explicitly, $\sigma(v) = \prod_{j=1}^{k} \frac{1}{\dim F(v_j)}$.
    \item \textbf{Position:} $\rho: V(T) \to \mathbb{Q}^3$ by $\rho(r) = \mathbf{0}$, and if the embedding $i(u,v)$ places $F(v)$ at position $(a,b)$ in $F(u)$ (i.e., into the $T_\infty$ tile at row $a$, column $b$), then $\rho(v) = \rho(u) + \sigma(u) \cdot (a-1, b-1, 0)$.
\end{enumerate}
For a subset $S \subset \mathbb{R}^3$, scalar $\lambda > 0$, and vector $\mathbf{w} \in \mathbb{R}^3$, write $\lambda \cdot S + \mathbf{w} := \{\lambda \mathbf{x} + \mathbf{w} : \mathbf{x} \in S\}$. The \textbf{geometric realization} is:
$$|\mathcal{M}| = \overline{\bigcup_{v \in V(T)} \Big( \sigma(v) \cdot |F(v)|_{\text{tame}} + \rho(v) \Big) \;\cup\; \bigcup_{(u,v) \in E(T)} \mathcal{C}(u,v)}$$
where $|F(v)|_{\text{tame}} \subset [0, \dim F(v)]^2 \times [0,1]$ denotes the realization of $F(v)$ using only tiles $T_0, \ldots, T_{10}$ (treating each $T_\infty$ tile as $T_0$, i.e., contributing $\varnothing$), and $\mathcal{C}(u,v)$ consists of \textbf{connecting segments}: for each connection point $q$ on the face of the $T_\infty$ sub-cube in $F(u)$ that corresponds to a connection point $q'$ on the boundary of $F(v)$ (as determined by the suitably connected condition), $\mathcal{C}(u,v)$ contains the straight-line segment $[\sigma(u) \cdot q + \rho(u),\; \sigma(v) \cdot q' + \rho(v)]$. Since $\sigma(v) \leq \sigma(u)/3$, the length of each connecting segment is at most $\sqrt{3}\,\sigma(u)$, tending to $0$ as depth increases. The closure is taken in $\mathbb{R}^3$.
\end{definition}

\begin{proposition} \label{geometric-realization-prop}
Let $\mathcal{M} = (T, F, i)$ be a well-defined, suitably connected tree mosaic.
\begin{enumerate}
    \item The geometric realization $|\mathcal{M}|$ is well-defined and compact.
    \item If the tree $T$ is finite, then $|\mathcal{M}|$ is a tame $\rv{2,4}$ graph.
    \item Each infinite path $\gamma = (v_0, v_1, v_2, \ldots)$ in $T$ determines a well-defined limit point $p_\gamma \in |\mathcal{M}|$.
\end{enumerate}
\end{proposition}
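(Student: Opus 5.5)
The plan is to prove the three parts in the order (1), (3), (2), all driven by one nesting estimate. I will first show by induction on depth that every piece attached to the subtree below a vertex $v$ lies in the box
\[
B(v) = \rho(v) + \sigma(v)\cdot\big([0,\dim F(v)]^2\times[0,1]\big).
\]
The reason is that the embedding $i(u,v)$ places $F(v)$ into the $T_\infty$ tile at position $(a,b)$ of $F(u)$. By the definitions of $\sigma$ and $\rho$, the box $B(v)$ is exactly the sub-cube $\rho(u)+\sigma(u)\big([a-1,a]\times[b-1,b]\times[0,1]\big)$ of $B(u)$. The only exception is the vertical direction, where $B(v)$ has height $\sigma(v)\le\sigma(u)/3$ and so still sits inside that sub-cube. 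The connecting segments $\mathcal{C}(u,v)$ join two points of this same sub-cube, so they lie in it by convexity. Consequently everything in the union defining $|\mathcal M|$ lies in $B(r)=[0,\dim F(r)]^2\times[0,1]$.

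For part (1), $|\mathcal M|$ is a closed subset of a bounded box, hence compact. Well-definedness needs two checks. First, $\sigma$ and $\rho$ are determined uniquely, because each vertex has a unique root path. Second, the well-defined hypothesis supplies the bijection between children and $\blacksquare$ tiles, and the suitably connected hypothesis supplies the matching of connection points $q\leftrightarrow q'$, so each segment in $\mathcal{C}(u,v)$ is unambiguous. The oriented embedding $x^\sigma$ acts only through $\sigma\cdot F(v)$, so we should replace $F(v)$ by $\sigma\cdot F(v)$ throughout; this changes no size estimate.

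For part (3), take an infinite path $\gamma=(v_0,v_1,\dots)$. The boxes $B(v_k)$ are nested, nonempty and compact. Their diameters are bounded by $\sqrt{3}\,\sigma(v_{k-1})$, since each sits in a tile sub-cube of side $\sigma(v_{k-1})$, and $\sigma(v_{k-1})\le 3^{-(k-1)}$ because every non-root mosaic has dimension at least $3$. Cantor's intersection theorem then gives a single point $p_\gamma$ with $\bigcap_k B(v_k)=\{p_\gamma\}$. To see that $p_\gamma\in|\mathcal M|$, I will pick a point of $|F(v_k)|_{\text{tame}}$ or of a connecting segment inside $B(v_k)$. Such a point exists because $F(v_k)$ is a tangle mosaic with boundary connection points (Remark~\ref{tameRemark} ensures tame tiles are present). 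These chosen points converge to $p_\gamma$, and $p_\gamma$ lies in the closure.

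For part (2), when $T$ is finite the union is finite, so the closure adds nothing. Each $|F(v)|_{\text{tame}}$ is a finite union of PL arcs by Definition~\ref{mosaic-realization}, and there are finitely many straight connecting segments. I expect the main obstacle to be checking that the result is an embedded $\rv{2,4}$ graph whose rigid vertices are at the leaf-level $T_\infty$ tiles, with no extra intersections. Inside the $T_\infty$ sub-cube of $F(u)$, the only pieces present are the child's realization and the segments joining each face point $q$ to the matching boundary point $q'$. For these segments I will argue that they are disjoint from each other and meet the child only at $q'$. The argument uses the matching of sides in Definition~\ref{suitable_tree}. With Lemma~\ref{bd_adjustment} placing $q'$ at the center of the corresponding face of the child box, each segment runs monotonically toward its own face. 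If that is too loose, I would instead straighten the segments using extra $\five$/$\six$ tiles via $z_p$, which is PL isotopic. The vertices of $T$ that have no children carry the genuine $T_\infty$ crossings at cube centers, which give the degree-$2$ and degree-$4$ rigid vertices. Since the complex is a finite PL complex, it is tame.
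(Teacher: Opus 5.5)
Parts (1) and (3) are sound. Part (1) follows the paper's argument: you induct down the tree so that every piece lies in $[0,\dim F(r)]^2\times[0,1]$, and you also handle the connecting segments by convexity, which the paper leaves implicit. In part (3) you take a slightly different route from the paper, which shows the corners $\rho(v_k)$ form a Cauchy sequence. You instead apply Cantor's intersection theorem to the nested boxes $B(v_k)$, whose diameters are at most $\sqrt3\,3^{-(k-1)}$. This is shorter, and it gives $p_\gamma=\bigcap_k B(v_k)$ directly, which is the description the paper itself relies on in Lemma~\ref{lem:mosaic-level-equiv}. Membership $p_\gamma\in|\mathcal M|$ is then argued as in the paper.

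\textbf{The false claim in part (2).} You say the childless vertices of $T$ carry the $T_\infty$ crossings that become the rigid vertices. They cannot. Well-definedness (Definition~\ref{treemosaic}) demands a bijection between the $\blacksquare$ tiles of $F(v)$ and the children of $v$, so a leaf has no $\blacksquare$ tile. Moreover, Definition~\ref{tree-mosaic-realization} uses $|F(v)|_{\text{tame}}$, which contributes nothing for $T_\infty$ tiles anyway. When $T$ is finite, every $T_\infty$ tile is filled by a child, so $|\mathcal M|$ has no rigid vertices at all. It is a closed PL $1$-manifold, i.e.\ a tame link, which is the vertex-free case of an $\rv{2,4}$ graph.

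\textbf{The embeddedness check in part (2).} You support it with the wrong tools. Lemma~\ref{bd_adjustment} and re-zooming by $z_p$ both modify the tree mosaic, but the proposition concerns the given one, whose children need not be boundary adjusted. No adjustment is needed. Since $B(v)$ has exactly the horizontal footprint of the $T_\infty$ sub-cube of $F(u)$, the point $q$ and its partner $q'$ lie in the relative interior of the same lateral face of that sub-cube. (They are on the same side by Definition~\ref{suitable_tree}.) Hence the segment joining them lies in that face. A $T_\infty$ tile has at most one connection per side, so distinct segments lie in distinct faces and are disjoint. The realization of the subtree at $v$ meets that face only at $q'$. The adjacent parent tile is tame, because $\blacksquare$ tiles are neither adjacent nor on the boundary, and it meets the face only at $q$. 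With these two repairs your part (2) is more complete than the paper's, which simply asserts that the pieces glue to a finite PL $1$-complex.
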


\begin{proof}
We prove each statement individually.

\begin{enumerate} 
	\item We first show $|\mathcal{M}|$ is bounded. Since $F(r) \in \mathbb{K}_{\text{RV}}$ is a knot mosaic, $|F(r)|_{\text{tame}} \subset [0, n_0]^2 \times [0,1]$ where $n_0 = \dim F(r)$. For any child $v$ of the root, the scaled and translated contribution $\sigma(v) \cdot |F(v)|_{\text{tame}} + \rho(v)$ lies entirely within the unit cube at position $(a-1, b-1, 0)$ in the root mosaic (where the corresponding $T_\infty$ tile was located). By induction, each $\sigma(v) \cdot |F(v)|_{\text{tame}} + \rho(v)$ is contained in $[0, n_0]^2 \times [0,1]$. Thus $|\mathcal{M}|$, being the closure of a bounded set, is bounded.

To show $|\mathcal{M}|$ is closed (hence compact), observe that the closure operation is included in the definition. The only limit points not already in the union are limits of sequences approaching infinite paths, which we address in (3).

	\item If $T$ is finite, the union is finite and each $|F(v)|_{\text{tame}}$ is a finite union of PL arcs. The suitably connected condition ensures arcs match at boundaries between parent and child mosaics. Hence $|\mathcal{M}|$ is a finite PL 1-complex, i.e., a tame $\rv{2,4}$ graph.

	\item Let $\gamma = (v_0, v_1, v_2, \ldots)$ be an infinite path from the root. We show $\{\rho(v_k)\}_{k=0}^{\infty}$ is a Cauchy sequence in $\mathbb{R}^3$.

First, we bound the distance between consecutive terms (positions between descending black boxes). For $k \geq 0$, let $(a_k, b_k)$ denote the row and column of the $T_\infty$ tile in $F(v_k)$ into which $F(v_{k+1})$ is embedded. The position update formula (cf.\ Definition \ref{tree-mosaic-realization}) states $\rho(v_{k+1}) = \rho(v_k) + \sigma(v_k) \cdot (a_k - 1, b_k - 1, 0)$, so:
\begin{align*}
\|\rho(v_{k+1}) - \rho(v_k)\|
&= \|\rho(v_k) + \sigma(v_k) \cdot (a_k - 1, b_k - 1, 0) - \rho(v_k)\| \\
&= \|\sigma(v_k) \cdot (a_k - 1, b_k - 1, 0)\| \\
&= \sigma(v_k) \cdot \|(a_k - 1, b_k - 1, 0)\| \qquad \text{(since $\sigma(v_k) > 0$)}.
\end{align*}
Since $F(v_k)$ is a $(\dim F(v_k))$-mosaic, the tile indices satisfy $1 \leq a_k, b_k \leq \dim F(v_k)$, hence $0 \leq a_k - 1, b_k - 1 \leq \dim F(v_k) - 1$. The norm is therefore bounded:
\begin{align*}
\|(a_k - 1, b_k - 1, 0)\|
&= \sqrt{(a_k - 1)^2 + (b_k - 1)^2 + 0^2} \\
&\leq \sqrt{(\dim F(v_k) - 1)^2 + (\dim F(v_k) - 1)^2} \\
&= (\dim F(v_k) - 1) \cdot \sqrt{2} \\
&< \sqrt{2} \cdot \dim F(v_k).
\end{align*}
Combining these, we obtain $\|\rho(v_{k+1}) - \rho(v_k)\| < \sqrt{2} \cdot \sigma(v_k) \cdot \dim F(v_k)$.

Now, since $\sigma(v_{k+1}) = \sigma(v_k) / \dim F(v_{k+1})$, we have $\sigma(v_k) \cdot \dim F(v_k) = \sigma(v_{k-1})$ for $k \geq 1$. Moreover, each non-root mosaic dimension satisfies $\dim F(v_j) \geq 3$ by Definition~\ref{suitable_tree}, so:
$$\sigma(v_k) = \prod_{j=1}^{k} \frac{1}{\dim F(v_j)} \leq \frac{1}{3^k}.$$

For $m > n \geq 1$, the triangle inequality gives:
$$\|\rho(v_m) - \rho(v_n)\| \leq \sum_{k=n}^{m-1} \|\rho(v_{k+1}) - \rho(v_k)\| < \sqrt{2} \sum_{k=n}^{m-1} \sigma(v_{k-1}) \leq \sqrt{2} \sum_{k=n}^{m-1} \frac{1}{3^{k-1}} = \frac{\sqrt{2}}{3^{n-1}} \sum_{j=0}^{m-n-1} \frac{1}{3^j}.$$
The geometric series is bounded by $\sum_{j=0}^{\infty} 3^{-j} = \frac{3}{2}$, so:
$$\|\rho(v_m) - \rho(v_n)\| < \frac{3\sqrt{2}}{2 \cdot 3^{n-1}} = \frac{3^2 \sqrt{2}}{2 \cdot 3^n} = \frac{9\sqrt{2}}{2} \cdot 3^{-n}.$$

Given $\varepsilon > 0$, choose $N \geq 1$ such that
\[
    \frac{9\sqrt{2}}{2} \cdot 3^{-N} < \varepsilon.
\]
Then for all $m > n \geq N$, we have
\[
    \|\rho(v_m) - \rho(v_n)\| < \varepsilon.
\]
Thus $\{\rho(v_k)\}$ is Cauchy.

Since $\mathbb{R}^3$ is complete, the limit $p_\gamma := \lim_{k \to \infty} \rho(v_k)$ exists and is unique for the fixed path $\gamma$. The point $p_\gamma$ lies in $|\mathcal{M}|$: choose any point of the nonempty tame realization inside each scaled descendant box
\[
    \rho(v_k) + [0,\; \sigma(v_k)\dim F(v_k)]^2 \times [0,\; \sigma(v_k)].
\]
The diameters of these boxes tend to zero, and $\rho(v_k) \to p_\gamma$, so these chosen points also converge to $p_\gamma$; the closure in the definition of $|\mathcal{M}|$ therefore contains $p_\gamma$.

\end{enumerate}

\end{proof}

\begin{definition}
    A tree mosaic $M=(T, F, i)$ is called \textbf{ray-repeating} if $T$ is an infinite ray such that for any two non-root vertices $w,v\in V(T)$ the associated $\rv{2,4}$ tangle mosaics are equal $F(w)=F(v).$
\end{definition}

Ray-repeating tree mosaics are a useful class of tree mosaics to draw examples from.

\begin{definition}
Let $M$ be a tree mosaic on $T$. We call an infinite path originating from the root a \textbf{combinatorial-wild point}.
\end{definition}


As we show below, every wild point (having $p$-index $\leq 4$) must be represented by a combinatorial wild point. Note the converse is not true, which is discussed in Theorem \ref{inf-ray-contract}.

\begin{lemma} \label{finite-wild-points}
A knot with isolated wild points has only finitely many wild points.
\end{lemma}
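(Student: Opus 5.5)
The plan is a compactness argument. Let $K$ be a knot, i.e.\ the image of an embedding $f\colon S^1 \to S$, and let $W \subseteq K$ be its set of wild points. The hypothesis ``isolated wild points'' means that every $q \in W$ has a neighborhood $U_q$ in $S$ with $U_q \cap W = \{q\}$. In other words, $W$ is a discrete subspace of $K$. The goal is to show that $W$ is also closed in $K$. Since $K \cong S^1$ is compact, a closed discrete subset of $K$ is compact and discrete, hence finite.

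The main step is closedness of $W$, which is the same as openness of the set of locally tame points. Take $q \in K \setminus W$. By Definition~\ref{wilddef}(7), there is a neighborhood $N_q$ of $q$ such that $K \cap \overline{N_q}$ can be approximated by a finite piecewise-linear curve, i.e.\ $K$ is tame in a neighborhood of $q$. Shrink to an open set $O \subseteq \mathrm{int}(N_q)$ containing $q$. For any $q' \in K \cap O$, the set $N_q$ is itself a neighborhood of $q'$ of the required kind, so $q'$ is locally tame as well. Hence $K \cap O$ contains no wild points, and $K \setminus W$ is open in $K$.

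The step I expect to need the most care is this one: verifying that local tameness, as phrased in Definition~\ref{wilddef}, is genuinely inherited by nearby points. The concern is that the phrase ``approximated by a finite PL curve'' should be interpreted as ``ambient-isotopic, near $q$, to a PL arc'' (the standard notion from Fox--Artin). Under that interpretation the tame neighborhood of $q$ also serves as a tame neighborhood of every nearby point of $K$. If one instead reads the definition with a closed ball, I would first pass to the interior, using that a subarc of a tame arc is tame.

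Finally, combine the two properties. Suppose $W$ were infinite. By compactness of $K$, an infinite sequence of distinct points of $W$ would have an accumulation point $p \in K$. Since $W$ is closed, $p \in W$. But then every neighborhood of $p$ meets infinitely many points of $W$, contradicting the assumption that $p$ is an isolated wild point. So $W$ is finite.
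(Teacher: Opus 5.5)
Your proposal is correct and follows essentially the same route as the paper: the locally tame points form an open subset of $K$ because one tame neighborhood of $q$ also serves every nearby point of $K$, so $W$ is closed, and a closed discrete subset of the compact space $K \cong S^1$ is finite. Your accumulation-point paragraph just writes out the final compactness step that the paper states in one line.
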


\begin{proof}
Local tameness is open along the knot: if $p \in K$ is locally tame, then there is an ambient ball $B$ about $p$ for which the pair $(B, B \cap K)$ is PL-equivalent to a standard unknotted arc in a ball. Any sufficiently nearby point of $K \cap B$ has a smaller ambient ball with the same property. Hence the locally tame points form an open subset of $K$, so the wild points $W \subset K$ form a closed subset. Since $W$ is also discrete (the wild points are isolated) and $K \cong \mathbb{S}^1$ is compact, $W$ is finite.
\end{proof}

\begin{theorem} \label{wild-knot-representability}
    If $K$ is a wild knot with isolated wild points, each of $p$-index at most $4$, then $K$ admits a tree mosaic representative. 
\end{theorem}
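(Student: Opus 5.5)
By Lemma~\ref{finite-wild-points}, $K$ has finitely many wild points $w_1,\dots,w_s$, so the plan is to build a tree whose root carries all the tame knotting and which has one infinite ray for each $w_j$. Since each $w_j$ has $p$-index $p_j\le 4$, we can choose a nested sequence of closed neighborhoods $B_j^{(0)}\supset B_j^{(1)}\supset\cdots$ shrinking to $w_j$, with each $\partial B_j^{(k)}$ meeting $K$ transversally in exactly $p_j$ points. We may take $p_j\in\{2,4\}$: the $p$-index is even for a closed curve meeting a sphere transversally, and a point of $p$-index $\le 4$ can be treated with $2$ or $4$ boundary points. We also choose the balls disjoint for distinct $j$. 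After an ambient isotopy, these may be taken to be round, or at least cube-shaped, balls of diameters tending to zero.

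Next we extract tame pieces. The complement $K\setminus\bigcup_j \operatorname{int}B_j^{(0)}$ is a compact subset of $K$ avoiding the wild points, so it is covered by finitely many locally tame neighborhoods and is therefore a tame $\rv{2,4}$ graph. We do this by collapsing each $B_j^{(0)}$ to a rigid vertex of degree $p_j$. Each shell $B_j^{(k)}\setminus \operatorname{int}B_j^{(k+1)}$ likewise meets $K$ in a tame tangle, again by compactness and local tameness away from $w_j$. Collapsing the inner ball to a rigid vertex gives a tame $\rv{2,4}$ tangle with $p_j$ boundary points and one rigid vertex.

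We then apply Proposition~\ref{rv-rep} to get a root mosaic $F(r)\in\KK_{\text{RV}}$ with $s$ $\blacksquare$ tiles. For each shell we get a tangle mosaic $F(v_j^{(k)})\in\LL_{\text{RV}}$. Lemma~\ref{bd_adjustment} and tangle injections make each of these odd-dimensional, at least $3$, with connection points centered on the correct sides. The $D_4$-oriented embeddings of Definition~\ref{def:oriented-embed} then let us match each child's boundary data to the sides and strand directions of its parent's $\blacksquare$ tile. The tree $T$ is the root with $s$ rays attached; it is countable and locally finite, well-definedness holds by construction, and the conditions of Definition~\ref{suitable_tree} are satisfied.

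Finally we check that $|\mathcal M|$ is ambient isotopic to $K$. The plan is to build a homeomorphism of $\mathbb R^3$ level by level. At stage $k$ we use the finite subtree, which by Proposition~\ref{geometric-realization-prop}(2) realizes a tame graph, and an isotopy supported in the stage-$k$ balls carries the $k$-th shells of $K$ onto the scaled mosaic realizations. Because supports shrink to $w_j$ and to the limit points $p_\gamma$ of Proposition~\ref{geometric-realization-prop}(3), the infinite composition converges uniformly to a homeomorphism sending $w_j\mapsto p_{\gamma_j}$.

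The main obstacle is this last step: ensuring the infinite composition is a homeomorphism, with the inverse also converging. It also requires the rigid-vertex collapse to be compatible with the nested balls. To handle it, we would first normalize the balls $B_j^{(k)}$ by a single preliminary homeomorphism so that they coincide with the nested boxes $\rho(v)+\sigma(v)[0,\dim F(v)]^2\times[0,1]$. After that, each later adjustment is a homeomorphism fixing the boundary of its box, so the maps agree outside ever-smaller boxes and the limit is bijective and continuous in both directions.
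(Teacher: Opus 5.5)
Your proposal follows the paper's proof essentially step for step: finitely many wild points via Lemma~\ref{finite-wild-points}, nested neighborhoods meeting $K$ in $2$ or $4$ points, a tame root piece and tame shells with the inner ball collapsed to a rigid vertex, Proposition~\ref{rv-rep} plus boundary adjustment, and a star-like tree with one ray per wild point. Your patching of boundary-fixing shell homeomorphisms is a more careful version of the paper's one-line Step~8 and mirrors the shell argument of Lemma~\ref{lem:mosaic-level-equiv}, but one detail needs correcting: the boxes $\rho(v)+\sigma(v)\bigl([0,\dim F(v)]^2\times[0,1]\bigr)$ all rest on the plane $z=0$, so $p_{\gamma}$ lies on every one of their boundaries and they are not strictly nested, so you must normalize to slightly enlarged, strictly nested neighborhoods of these boxes (e.g.\ extended below $z=0$ and adjusted so that $|\mathcal M|$ crosses their boundaries transversally) rather than to the boxes themselves.
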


\begin{proof}
Let $K \subset \mathbb{R}^3$ be a wild knot with isolated wild points, each having $p$-index at most $4$. We construct a tree mosaic $M = (T, F, i)$ whose geometric realization is $K$.

\medskip
\noindent\textbf{Step 1: Finiteness of wild points.}
Let $W \subset K$ denote the set of wild points. By Lemma \ref{finite-wild-points}, $W$ is finite; write $W = \{p_1, \ldots, p_N\}$ for some $N \geq 1$. For each $p_j$, let $P_j := P(K, p_j) \in \{2, 4\}$ denote its $p$-index.

\medskip
\noindent\textbf{Step 2: Separating neighborhoods.}
Since $W$ is finite and the $p_j$ are distinct, we may choose $\varepsilon > 0$ such that:
\begin{enumerate}
    \item[(i)] $\varepsilon < \frac{1}{2} \min_{j \neq k} \|p_j - p_k\|$, so the balls $B_\varepsilon(p_j)$ are pairwise disjoint, and
    \item[(ii)] each $B_\varepsilon(p_j)$ contains no wild points other than $p_j$.
\end{enumerate}

\medskip
\noindent\textbf{Step 3: Constructing the root mosaic.}
For each $j \in \{1, \ldots, N\}$, by definition of $p$-index, there exists $\varepsilon_{j,1}$ with $0 < \varepsilon_{j,1} < \varepsilon$ such that $\partial B_{\varepsilon_{j,1}}(p_j)$ intersects $K$ in exactly $P_j$ points. Consider the compact subset
$$K_0 := K \setminus \bigcup_{j=1}^{N} \mathrm{int}(B_{\varepsilon_{j,1}}(p_j)).$$
Since $K_0$ contains no wild points, it is a tame arc (or union of tame arcs) with $P_j$ boundary points on each sphere $\partial B_{\varepsilon_{j,1}}(p_j)$. We view $K_0$ as an $\rv{2,4}$ graph by collapsing each sphere $\partial B_{\varepsilon_{j,1}}(p_j)$ to a single rigid vertex of degree $P_j$ (either $2$ or $4$), preserving the cyclic order of the incident strands.

By Proposition \ref{rv-rep}, there exists an $\rv{2,4}$ knot mosaic $M_0 \in \mathbb{K}_{\text{RV}}$ representing $K_0$. This mosaic $M_0$ contains exactly $N$ tiles of type $T_\infty$, one for each wild point neighborhood. Let $M_0$ be the mosaic assigned to the root $r$ of our tree.

\medskip
\noindent\textbf{Step 4: Constructing rays for each wild point.}
Fix $j \in \{1, \ldots, N\}$. We construct an infinite sequence of $\rv{2,4}$ tangle mosaics corresponding to nested annular regions around $p_j$.

Since $p_j$ has $p$-index $P_j$, there exist radii $\varepsilon_{j,1} > \varepsilon_{j,2} > \varepsilon_{j,3} > \cdots$ with $\varepsilon_{j,n} \to 0$ such that each sphere $\partial B_{\varepsilon_{j,n}}(p_j)$ meets $K$ in exactly $P_j$ points. For each $n \geq 1$, define the annular region
$$A_{j,n} := \left( \overline{B_{\varepsilon_{j,n}}(p_j)} \setminus \mathrm{int}(B_{\varepsilon_{j,n+1}}(p_j)) \right) \cap K.$$
Each $A_{j,n}$ is a tame tangle: it is compact, contains no wild points (since $p_j$ is the only wild point in $B_\varepsilon(p_j)$ and $p_j \in B_{\varepsilon_{j,n+1}}(p_j)$), and has exactly $P_j$ boundary points on each of its two boundary spheres.

Collapse the inner sphere $\partial B_{\varepsilon_{j,n+1}}(p_j)$ to a single rigid vertex of degree $P_j$. This turns $A_{j,n}$ into a tame $\rv{2,4}$ tangle with $P_j$ boundary points on the outer sphere and one $T_\infty$ vertex representing the next level of nesting. By Proposition \ref{rv-rep}, it admits an $\rv{2,4}$ tangle mosaic representative; after applying the boundary adjustment lemma, we may assume this representative has odd dimension. Denote the resulting mosaic by $M_{j,n} \in \mathbb{L}_{\text{RV}}$. It has $P_j$ connection points on its outer boundary and $P_j$ connection points incident to its $T_\infty$ tile.

\begin{figure}[h]
    \centering
    \captionsetup[subfigure]{labelformat=empty}
    \begin{subfigure}[b]{0.4\textwidth}
        \centering
        \includegraphics[width=\textwidth]{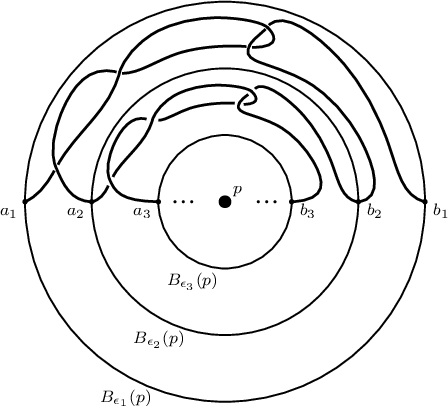}
        \caption{}
    \end{subfigure}
    \qquad
    \begin{subfigure}[b]{0.3\textwidth}
        \centering
        \includegraphics[width=\textwidth]{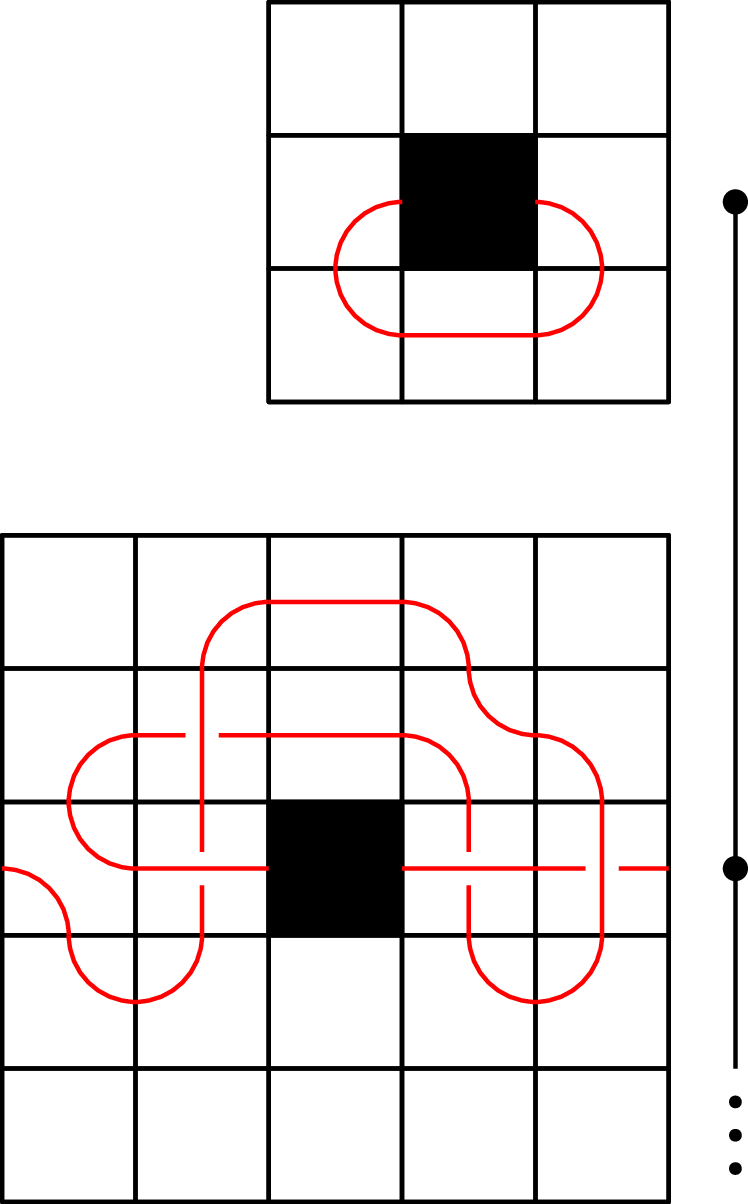}
        \caption{}
    \end{subfigure}
    \hfill
     \caption {Nested neighborhoods of an isolated wild point with $p$-index $2$. Each annular region $A_{j,n}$ is tame and admits a mosaic representative.}
\end{figure}

\medskip
\noindent\textbf{Step 5: Defining the tree structure.}
We define the rooted tree $T = (V, E)$ as follows:
\begin{itemize}
    \item The vertex set is $V = \{r\} \cup \{v_{j,n} : 1 \leq j \leq N, \, n \geq 1\}$.
    \item The root $r$ has $N$ children: $v_{1,1}, \ldots, v_{N,1}$.
    \item For each $j$ and $n \geq 1$, vertex $v_{j,n}$ has exactly one child: $v_{j,n+1}$.
\end{itemize}
Thus $T$ consists of the root $r$ with $N$ infinite rays attached, one ray $r \to v_{j,1} \to v_{j,2} \to \cdots$ for each wild point $p_j$.

\medskip
\noindent\textbf{Step 6: Assigning mosaics and embeddings.}
Define the mosaic assignment $F: V \to \mathbb{M}_{\text{RV}}$ by:
\begin{itemize}
    \item $F(r) = M_0$, the root mosaic constructed in Step 3.
    \item $F(v_{j,n}) = M_{j,n}$, the tangle mosaic for the $n$-th annular region around $p_j$.
\end{itemize}

For the embedding functions $i: E \to \textsf{Hom}_{\text{RV}}$, we specify:
\begin{itemize}
    \item For the edge $(r, v_{j,1})$: the embedding $i(r, v_{j,1})$ inserts $M_{j,1}$ into the $T_\infty$ tile of $M_0$ corresponding to wild point $p_j$, matching the $P_j$ connection points.
    \item For each edge $(v_{j,n}, v_{j,n+1})$: the embedding $i(v_{j,n}, v_{j,n+1})$ inserts $M_{j,n+1}$ into the unique $T_\infty$ tile of $M_{j,n}$, again matching connection points.
\end{itemize}
The connection points are matched using the boundary adjustment lemma to ensure consistent orientation.

\medskip
\noindent\textbf{Step 7: Verification.}
We verify that $M = (T, F, i)$ is a well-defined, suitably connected tree mosaic:
\begin{itemize}
    \item \emph{Well-defined}: Each $T_\infty$ tile has exactly one corresponding child vertex, and vice versa.
    \item \emph{Suitably connected}: The root mosaic $F(r) \in \mathbb{K}_{\text{RV}}$ is a knot mosaic, each $F(v_{j,n}) \in \mathbb{L}_{\text{RV}}$ is an odd-dimensional tangle mosaic, and connection points match across embeddings by construction.
\end{itemize}

\medskip
\noindent\textbf{Step 8: Geometric realization.}
It remains to show that the geometric realization $|M|$ (cf.\ Definition \ref{tree-mosaic-realization}) recovers $K$ up to ambient homeomorphism. The knot $K$ decomposes as
$$K = K_0 \cup \bigcup_{j=1}^{N} \left(\{p_j\} \cup \bigcup_{n=1}^{\infty} A_{j,n}\right),$$
where this follows from $\bigcap_{n=1}^{\infty} B_{\varepsilon_{j,n}}(p_j) = \{p_j\}$ (since $\varepsilon_{j,n} \to 0$). By construction, the root mosaic represents $K_0$ with one $T_\infty$ tile for each collapsed sphere, and each ray mosaic $M_{j,n}$ represents the corresponding annular tangle $A_{j,n}$ with matching boundary data. Gluing these piecewise ambient homeomorphisms along the matched boundary connection points gives an ambient homeomorphism from $|M|$ to $K$; the closure in Definition~\ref{tree-mosaic-realization} adds exactly the limiting points corresponding to the $p_j$.

Each wild point $p_j$ corresponds to the unique infinite path $r \to v_{j,1} \to v_{j,2} \to \cdots$ in $T$, confirming that wild points are represented by combinatorial-wild points.
\end{proof}

\begin{theorem}\label{p-index-constraint}
    If $K$ is a knot with a point $p \in K$ that has $p$-index $P(K,p)>4$, there cannot exist a tree mosaic representative of $K$.
\end{theorem}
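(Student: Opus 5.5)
We argue by contradiction. Suppose $K$ has a point $p$ with $P(K,p) > 4$, and suppose $\mathcal{M} = (T,F,i)$ is a well-defined, suitably connected tree mosaic with $|\mathcal{M}|$ ambient homeomorphic to $K$. The strategy is to exhibit, for every point of $|\mathcal{M}|$, arbitrarily small neighborhoods whose boundaries meet $|\mathcal{M}|$ in at most $4$ points. Since $p$-index is invariant under ambient homeomorphism, this contradicts $P(K,p)>4$.

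\textbf{Case 1: the point is in the tame part.} If the image of $p$ lies in $\sigma(v)\cdot|F(v)|_{\text{tame}}+\rho(v)$ for some vertex $v$, or on a connecting segment $\mathcal{C}(u,v)$, then it has a neighborhood meeting $|\mathcal{M}|$ in a finite PL arc. This follows because only finitely many levels of the tree are visible near such a point: the descendant boxes of other $T_\infty$ tiles stay a positive distance away. Hence the point is locally tame and has $p$-index $2$.

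\textbf{Case 2: the point is a limit point.} Otherwise the point is not in the union and is added by the closure. We first show it equals $p_\gamma$ for some infinite path $\gamma=(v_0,v_1,\dots)$ from the root, using Proposition~\ref{geometric-realization-prop}(3). Take a sequence of points in the union converging to it. These points must lie in descendant boxes
\[
B_k=\rho(v_k)+[0,\sigma(v_k)\dim F(v_k)]^2\times[0,\sigma(v_k)]
\]
of increasing depth. Because $T$ is locally finite, K\H{o}nig's lemma extracts a single infinite path along which the sequence is eventually nested, and the point is then $p_\gamma$.

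For such a limit point we build the neighborhoods explicitly. For each $k$, let $U_k$ be a slight open thickening of the cube in $F(v_k)$ occupied by the $T_\infty$ tile into which $F(v_{k+1})$ is embedded. This region contains $B_{k+1}$ and hence $p_\gamma$, and its diameter is at most $\sqrt{3}\,\sigma(v_k)\to 0$. The key observation is that $|\mathcal{M}|$ meets $\partial U_k$ only at the connecting segments $\mathcal{C}(v_k,v_{k+1})$, each crossing once. These segments correspond to the connection points of a single $T_\infty$ tile, which has degree $2$ or $4$ by Definition~\ref{rv24def} and the suitably connected condition (Definition~\ref{suitable_tree}). Therefore $|\partial U_k\cap|\mathcal{M}||\in\{2,4\}$ for every $k$, giving $P(|\mathcal{M}|,p_\gamma)\le 4$. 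Finally, because $\partial U_k$ is PL and tame, transferring through the ambient homeomorphism gives neighborhoods of $p$ in $K$ with the same boundary count, so $P(K,p)\le4$, which is the contradiction.

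\textbf{Main obstacle.} The hardest step is showing that $\partial U_k$ meets $|\mathcal{M}|$ exactly in the connecting segments. This requires checking two things. First, the tame arcs of $F(v_k)$ in adjacent tiles avoid the thickened cube; this uses the rule that no two $\blacksquare$ tiles are adjacent and that tile arcs have interiors inside open unit cubes. Second, all deeper material, including the closure points, stays strictly inside $U_k$. My first attempt would be to take $U_k$ as the open cube scaled by a factor $1+\delta$ with $\delta$ small. The arcs of neighboring tiles enter only through face centers, so a small $\delta$ cuts each exactly once, at a point corresponding to a connecting segment.
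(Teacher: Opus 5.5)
Your proof is correct and follows essentially the same route as the paper: you rule out points in the tame part, identify the offending point as a limit $p_\gamma$ of an infinite path (you supply the K\"onig-type argument the paper only asserts), and use shrinking neighborhoods of the nested boxes whose boundaries meet the knot only near the $2$ or $4$ connection points of a single $T_\infty$ tile. One small correction: in the paper's realization both endpoints of each segment in $\mathcal{C}(v_k,v_{k+1})$ lie in the same face of the $T_\infty$ sub-cube, so your thickened $U_k$ contains these segments in its interior, and $\partial U_k$ actually cuts the adjacent tile arcs of $F(v_k)$ near the face centers (as your final paragraph says), which gives the same count.
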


\begin{proof}
    Suppose, for contradiction, that $M$ is a tree mosaic representative of $K$. Since $P(K,p)>4$, the point $p$ is wild. Points in the interior of a finite tile, connecting segment, or finite-stage mosaic arc have tame neighborhoods, so $p$ must occur as the limit point associated to an infinite path $\gamma=(v_0,v_1,v_2,\ldots)$ in the tree.

    For each $j$, let $R_j$ be the geometric box occupied by the subtree rooted at $v_j$. These boxes are nested and have diameters tending to zero. Since the rest of the realization meets $R_j$ only through the boundary connection points inherited from the parent $T_\infty$ tile, we may choose an arbitrarily small regular neighborhood $U_j$ of $R_j$ whose boundary intersects $K$ only in small transverse cuts near those connection points. Since every $T_\infty$ tile in an $\rv{2,4}$ mosaic has either $2$ or $4$ connection points, we have
    \[
        |K \cap \partial U_j| \leq 4
    \]
    for arbitrarily small neighborhoods $U_j$ of $p$. Hence $P(K,p) \leq 4$, contradicting the assumption that $P(K,p)>4$.
\end{proof}

\begin{theorem}[Contracting an Infinite Ray]\label{inf-ray-contract}
    Let $M$ be a subtree mosaic whose tree is an infinite ray. Suppose that this ray $\gamma$, which is a combinatorial-wild point by definition, does not correspond to a wild point $p_\gamma$. Then, provided that $|M|$ is a tangle, there exists a finite subtree mosaic $N$ such that $|M|$ and $|N|$ are ambient homeomorphic.
\end{theorem}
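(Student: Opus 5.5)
The plan is to use the hypothesis that $p_\gamma$ is not wild to replace the whole infinite ray by a single tame tangle mosaic sitting at the root. Write the ray as $v_0 \to v_1 \to v_2 \to \cdots$. By Proposition~\ref{geometric-realization-prop}(3) the ray determines a limit point $p_\gamma \in |M|$. Since $p_\gamma$ is not a wild point, Definition~\ref{wilddef} gives a closed neighborhood $N_{p}$ of $p_\gamma$ in which $|M|\cap N_p$ is a finite PL curve. Away from $p_\gamma$, every point of $|M|$ lies in some finite stage (a tile arc or a connecting segment) and so is locally tame. Hence $|M|$ is locally tame at every point. Since $|M|$ is compact (Proposition~\ref{geometric-realization-prop}(1)), finitely many tame neighborhoods cover it. We conclude that $|M|$ is a tame tangle in the box $R_0=[0,\dim F(v_0)]^2\times[0,1]$, with boundary points exactly the boundary connection points of $F(v_0)$.

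The second step is to handle the ``$p$-index-$\le 4$ plug''. Choose $k$ large enough that the box $R_k$ occupied by the subtree rooted at $v_k$ lies inside $\mathrm{int}(N_p)$; this is possible because the diameters of the $R_k$ tend to $0$, as in the proof of Theorem~\ref{p-index-constraint}. Then $|M|\cap R_k$ is a tame tangle with $2$ or $4$ endpoints (matching the $T_\infty$ tile of $F(v_{k-1})$), contained in a tame arc system inside $N_p$. By Proposition~\ref{tangle-rep} (or Proposition~\ref{rv-rep}), this tangle admits a tangle mosaic representative $L$. Applying the boundary adjustment lemma (Lemma~\ref{bd_adjustment}) and tangle injections, we may take $L$ of odd dimension with its connection points matching the sides and directions of that $T_\infty$ tile, so that the suitably connected condition of Definition~\ref{suitable_tree} holds.

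Define $N$ to be the finite subtree mosaic on $v_0,\ldots,v_{k-1}$ together with a new leaf $w$, where $F(w)=L$, $i(v_{k-1},w)$ is the embedding $i(v_{k-1},v_k)$ retargeted to $L$, and $F$ and $i$ are otherwise unchanged. By Proposition~\ref{geometric-realization-prop}(2), $|N|$ is a tame $\rv{2,4}$ tangle. Moreover, $|N|$ and $|M|$ agree outside the scaled box $\sigma(v_k)R_k+\rho(v_k)$ up to the connecting segments, which are straight and meet the box boundary at the same points.

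The main obstacle is producing an ambient homeomorphism of the box that is the identity (or at least fixed) outside the small box and carries $|M|\cap R_k$ onto $|L|$ rescaled. Both are tame tangles with the same boundary, and they represent the same tangle type because $L$ was chosen as a representative of exactly that tangle. Two PL tangles of the same type in a ball are related by an ambient isotopy rel boundary. I would obtain this by coning/Alexander-trick arguments on the ball $R_k$, after a small collar adjustment absorbing the straight connecting segments. The delicate point is checking that the closure point $p_\gamma$ contributes nothing extra, i.e.\ that $|M|\cap R_k$ really is a PL tangle rather than merely locally tame away from one point; here I would invoke local tameness at $p_\gamma$ once more to straighten a neighborhood of $p_\gamma$ to a finite polygonal arc before comparing. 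Extending by the identity outside $R_k$ then yields the ambient homeomorphism $|M|\cong|N|$.
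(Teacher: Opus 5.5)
Your proof is correct and shares the paper's key step, but it builds $N$ differently. Your first paragraph is essentially the paper's whole argument. The ray $\gamma$ is the only infinite path, every other point of $|M|$ lies in a finite stage and is locally tame, and $p_\gamma$ is not wild by hypothesis, so $|M|$ is tame. The paper then applies Proposition~\ref{tangle-rep} to all of $|M|$ and takes $N$ to be a single vertex carrying the resulting tangle mosaic.

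You instead keep $F(v_0),\dots,F(v_{k-1})$ and replace only the tail below depth $k$ by a leaf carrying a representative of $|M|\cap R_k$. For the theorem as stated this is more work than needed. Once $|M|$ is globally tame, the condition $R_k\subset\mathrm{int}\,N_p$ is redundant and collapsing everything already works. Your route also forces you to match $L$'s boundary data and orientation to the $T_\infty$ tile of $F(v_{k-1})$. What it buys is a localized conclusion: the homeomorphism is supported near $p_\gamma$, and $N$ has literally the same root mosaic and boundary connection points as $M$. That is what one needs to splice the replacement back into a larger tree mosaic, as the subsequent remark about discarding non-wild combinatorial-wild points implicitly requires. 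The paper's single-vertex $N$ would need a further boundary adjustment for that.

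Three small corrections:
\begin{itemize}
\item Passing from ``locally tame at every point'' to ``tame'' is not a finite-cover argument. It is the theorem of Bing and Moise that locally tame curves in $\mathbb{R}^3$ are tame; the paper glosses over this step the same way.
\item The rel-boundary equivalence of $|M|\cap R_k$ with $|L|$ comes directly from the definition of $L$ as a representative of that tangle. The Alexander trick plays no role; it concerns isotopies of homeomorphisms of a ball, not equivalence of two tangles.
\item Your homeomorphism should be supported on a slightly larger ball than $R_k$. By Definition~\ref{tree-mosaic-realization}, the new leaf's box has the same footprint as the $T_\infty$ sub-cube $C$ of $F(v_{k-1})$ but height $\sigma(v_{k-1})/\dim L$. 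This differs from the height of $R_k$ when $\dim L\neq\dim F(v_k)$. The connecting segments also lie in the lateral faces of $C$, with endpoints that move when the child box changes. So $|M|$ and $|N|$ agree only outside a slight enlargement of $C$, and that ball is where your rel-boundary homeomorphism should live.
\end{itemize}
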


\begin{proof}
    Let $\gamma$ denote the ray emanating from the root, and let $p_\gamma$ be the corresponding limit point from Proposition \ref{geometric-realization-prop}. Suppose that $p_\gamma$ is not a wild point.

    Since the tree of $M$ is an infinite ray, $\gamma$ is its only infinite path. Every point of $|M|$ other than $p_\gamma$ lies in a finite-stage tile arc, connecting segment, or finite union of such pieces, and therefore has a tame neighborhood. Thus the only possible wild point of $|M|$ is $p_\gamma$. By hypothesis, $p_\gamma$ is not wild, so $|M|$ is tame.

    Then, by Proposition \ref{tangle-rep}, there exists a tangle mosaic representative $L \in \LL^{(n)}$ for $|M|$.

    Let $N$ be a finite subtree mosaic on a single vertex, with the tangle mosaic $L$ associated to that vertex. Then $|M|$ and $|N|$ are ambient homeomorphic.
\end{proof}




\begin{remark}
    We may neglect combinatorial-wild points of a tree knot mosaic that do not correspond to true wild points on its realization. That is, without loss of generality, \textbf{we may assume that all combinatorial-wild points represent true wild points.}
\end{remark}


\begin{example} In Figure \ref{combo}, we show an example of a combinatorial-wild point that does not correspond to a wild point. Topologically, the pictured knot is the unknot.

\begin{figure}[H]
\centering
\includegraphics[width=.15\linewidth]{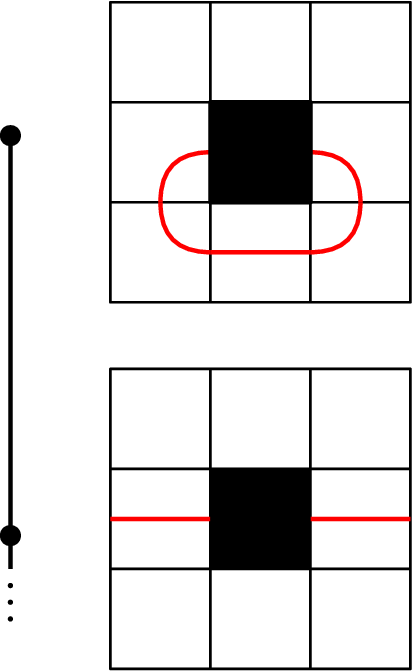} 
\caption{Ray-repeating mosaics always have one combinatorial-wild point, by construction.}\label{combo}
\end{figure}

\end{example}

\begin{remark}
    Notably, just because a knot has non-isolated wild points, that does not prohibit us from representing it as a mosaic. For example, consider a tree mosaic defined on an infinite binary tree. This has uncountably many combinatorial-wild points (i.e. infinitely many rays emanating from the root).

    \textbf{From now on, all tree mosaics we consider will be assumed to have isolated, and thus finitely many, wild points.}
\end{remark}

\subsection{Contractions}

\begin{definition}[Tree mosaic contractions]\label{def:contraction}
Let $\mathcal{M} = (T,F,i)$ be a tree mosaic. A \textbf{contraction} $\mathcal{M} \searrow \mathcal{M}'$ to a tree mosaic $\mathcal{M}' = (T',F',i')$ is defined as follows.

Let $\{S_j\}_{j \in J}$ be a countable collection of pairwise vertex-disjoint finite subtrees of $T$, no two of which are joined by an edge of $T$, each with root $r_j$ (the vertex of $S_j$ closest to the root of $T$). More general finite collections may be contracted successively.
\begin{itemize}
	\item \textbf{Tree contraction:} $T'$ is obtained from $T$ by contracting each $S_j$ to a single vertex $v_{S_j}$. Vertices not in any $S_j$ are unchanged.

	\item \textbf{Mosaic assignments:} For $u \notin \bigcup_{j} V(S_j)$, set $F'(u) = F(u)$. For a contracted vertex $v_{S_j}$, define $F'(v_{S_j})$ by induction on $|V(S_j)|$. 
    \begin{itemize}
        \item If $|V(S_j)| = 1$, set $F'(v_{S_j}) = F(r_j)$.
        \item If $|V(S_j)| > 1$, let $w_1, \ldots, w_m$ be the children of $r_j$ in $S_j$, and let $M_k$ be the mosaic obtained by (inductively) contracting the subtree of $S_j$ rooted at $w_k$. Then $F'(v_{S_j})$ is obtained from $F(r_j)$ by inserting $M_k$ into the $T_\infty$ tile targeted by $i(r_j, w_k)$, for each $k = 1, \ldots, m$.
    \end{itemize}
    
	\item \textbf{Edge embeddings:} For an edge $(a, b) \in E(T')$: 
	\begin{itemize}
		\item If neither $a$ nor $b$ is a contracted vertex, then $i'(a, b) = i(a, b)$.
        \item If $a = v_{S_j}$ and $b \notin V(S_j)$, then $b$ was a child of some $\ell \in V(S_j)$ in $T$. Since the contraction only replaces $T_\infty$ tiles whose corresponding children lie in $S_j$, the tile of $F(\ell)$ targeted by $i(\ell, b)$ survives in $F'(v_{S_j})$, and $i'(v_{S_j}, b)$ is the embedding into this tile.
        \item If $b = v_{S_j}$ and $a \notin V(S_j)$, then $a$ was the parent of $r_j$ in $T$. We set $i'(a, v_{S_j}) = i(a, r_j)$.
	\end{itemize}
\end{itemize}
\end{definition}

\begin{remark}
Intuitively, a contraction collapses a finite portion of the tree mosaic into a single vertex by ``realizing'' the embeddings throughout the subtree. (See Figure \ref{contraction-example}.) The resulting mosaic $F'(v_{S_j})$ is the bottom-up composition of all mosaics in the subtree, and the contracted tree mosaic represents the same knot. In particular, contracting an entire finite tree mosaic to a single vertex recovers the classical mosaic of the corresponding tame knot.
\end{remark}

\begin{figure}[H] 
\includegraphics[width=.5\linewidth]{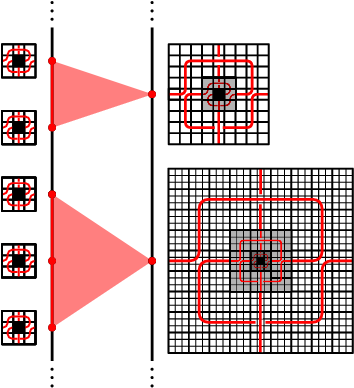}
\caption{A contraction of a tree mosaic.}\label{contraction-example}
\end{figure}

\begin{theorem}\label{thm:contraction-preserves-realization}
    A contraction of a tree mosaic preserves the represented knot type: if $\mathcal{M} \searrow \mathcal{M}'$, then $|\mathcal{M}|$ and $|\mathcal{M}'|$ are ambient homeomorphic.
\end{theorem}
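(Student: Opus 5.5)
The plan is to reduce to a single contracted subtree and then to a single edge, by induction on $|V(S_j)|$, mirroring the inductive definition of $F'(v_{S_j})$. Since the $S_j$ are pairwise vertex-disjoint and not joined by edges, the geometric regions they affect are disjoint. Concretely, the region for $S_j$ is the box $\rho(r_j)+\sigma(r_j)\cdot([0,\dim F(r_j)]^2\times[0,1])$ together with the connecting segments into and out of it. So it suffices to build, for each $j$, an ambient homeomorphism supported in a small neighborhood of this box, and then compose them.

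\textbf{Preparatory observation.} If $S_j$ has $m$ vertices, then $\dim F'(v_{S_j})$ is the product of the dimensions of the mosaics along the inserted chain. Via the $m$-zoom maps, the new scale factor $\sigma'$ and position $\rho'$ of every descendant $b\notin S_j$ (hanging off some $\ell\in V(S_j)$) are then related to the old ones by an explicit affine map. I would verify directly from the formulas in Definition~\ref{tree-mosaic-realization} that $\sigma'(b)=\sigma(b)$ and $\rho'(b)=\rho(b)$ up to a fixed affine reparametrization of the box. This requires the inserted tangle $F(w)$ to sit in the zoomed $T_\infty$ block exactly where the scaled child box sat. The consequence is that the entire realization of every subtree below $S_j$ is carried over unchanged, up to this affine map.

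\textbf{Base step: one edge $(u,w)$.} Compare two pieces inside the $T_\infty$ cube of $F(u)$.
\begin{itemize}
\item \emph{Contracted side:} the zoomed straight tiles $T_5,T_6$ from $z_p(F(u))$, followed by $F(w)$ at full scale within the block.
\item \emph{Original side:} the connecting segments $\mathcal{C}(u,w)$ joining the face centers of the $T_\infty$ cube to the boundary points of the shrunken $\sigma(w)\cdot|F(w)|+\rho(w)$.
\end{itemize}
Both are a tame tangle in the cube $\tau\times[0,1]$ whose boundary points agree. Each consists of the tangle $F(w)$ with $2$ or $4$ unknotted, unlinked collar arcs attached, running radially to the face centers. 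A radial PL "stretch" of the cube, fixing its boundary, carries the shrunken copy of $F(w)$ plus collars onto the zoomed picture. This stretch is a homeomorphism of the cube rel boundary, and it extends by the identity outside. It maps the deeper descendants compatibly because it is affine on the box of $F(w)$.

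\textbf{Main obstacle.} The step I expect to be hardest is ensuring this local homeomorphism also behaves correctly on the infinite part of the subtree below. The stretch must be affine (a similarity) on the box of $F(w)$, so that all descendant realizations, and therefore the closure points $p_\gamma$, are carried to the corresponding ones in $|\mathcal{M}'|$. I would build it as a piecewise-linear map of the cube that is a similarity on the inner box and interpolates linearly on the shell between the inner box and the cube boundary. The collar arcs lie in this shell and are straight in both pictures. They can be taken as the straight segments of $\mathcal{C}(u,w)$ and the straight zoomed tile arcs, so an appropriate radial PL map sends one to the other.

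\textbf{Closure and composition.} Once the map is shown to be a homeomorphism of the cube carrying the dense union onto the dense union, continuity gives that it carries closures to closures. This yields $|\mathcal{M}|\cong|\mathcal{M}'|$ for a single edge. Induction on $|V(S_j)|$ then handles each $S_j$. Finally, because the supports for different $j$ are disjoint with diameters tending to zero, the countable composition converges to a homeomorphism of $\mathbb{R}^3$, with continuity at the limit points holding by that diameter bound.
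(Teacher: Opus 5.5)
Your overall architecture matches the paper's: reduce to one subtree, absorb one edge at a time, build a local homeomorphism that is a similarity on deeper boxes, and pass to closures. The base step, however, rests on an incorrect picture of what contraction does to the realization, and as stated it cannot work.

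Take $u$ non-root, $n=\dim F(u)$, $m=\dim F(w)$. By Definition~\ref{tree-mosaic-realization}, $\sigma'(v_S)=\sigma(\mathrm{par}(u))/(nm)=\sigma(u)/m=\sigma(w)$ and $\rho'(v_S)=\rho(u)$. So the block of $F(w)$ inside $F'(v_S)$ is rendered at $\rho(w)+\sigma(w)\,|F(w)|_{\text{tame}}$, exactly where and at exactly the scale it already was. Every descendant of $w$ likewise keeps its $\sigma$ and $\rho$. The original copy is not "shrunken" either: since $\sigma(w)\dim F(w)=\sigma(u)$, it already fills the footprint of the $T_\infty$ tile, as its bottom slab. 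Also, the fundamental embedding overwrites the whole $m\times m$ block, so no zoom connectors lie inside it.

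What does change is everything else in $R(u)=\rho(u)+[0,\sigma(u)n]^2\times[0,\sigma(u)]$:
\begin{itemize}
\item each tame tile of $F(u)$ becomes its $m$-zoom at scale $\sigma(u)/m$, so the parent's arcs are subdivided and flattened to height $\sigma(u)/m$;
\item every other child $b$ of $u$ gets $\sigma'(b)=\sigma(b)/m$ and is recentred in the middle sub-tile of its zoomed block;
\item the segments $\mathcal{C}(u,w)$ disappear;
\item $\mathcal{C}(\mathrm{par}(u),u)$ now ends at height $\sigma(u)/(2m)$ instead of $\sigma(u)/2$.
\end{itemize}

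Consequences for your argument:
\begin{itemize}
\item A homeomorphism supported in the $T_\infty$ cube of $F(u)$ and fixing its boundary can never carry $|\mathcal{M}|$ onto $|\mathcal{M}'|$, since the two sets differ outside that cube.
\item Inside the cube, your two local pictures do not share boundary data: contracted arcs cross the faces at height $\sigma(u)/(2m)$, not at the face centres.
\item Because $R(w)$ shares its bottom and lateral faces with that cube, a similarity on $R(w)$ fixing the cube boundary is forced to be the identity. That is correct, since $F(w)$ does not move, but it means your radial stretch does nothing.
\item Your preparatory observation fails for the same reason. There is no single affine reparametrization: descendants below $w$ are unchanged, while those below other children of $u$ shrink by $1/m$ about child-dependent centres.
\end{itemize}

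The missing step is the one the paper's proof points to ("finite affine subdivision \dots\ zoom tiles are blank or straight connectors"). The support must be the whole region of the absorbing vertex: the parent's $T_\infty$ cube containing $R(u)$, slightly thickened because $\mathcal{C}(\mathrm{par}(u),u)$ lies on its lateral faces. The map should be:
\begin{itemize}
\item the identity on $R(w)$;
\item a ratio-$1/m$ similarity on each other $R(b)$;
\item tile by tile, a PL homeomorphism taking each tile's arcs, together with the old connecting segments, to the flattened zoomed block.
\end{itemize}

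If $r_j$ is the root, $\sigma(r)=1$ is fixed, so $|\mathcal{M}'|$ lives in a box $m$ times wider. You must then compose with a global affine rescaling; no map supported near the old box suffices.

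Finally, vertex-disjoint $S_j$ need not have disjoint regions. In the case $S_j=\{u,w\}$, suppose $r_k$ is a proper descendant of another child $b$ of $u$. Contracting $S_j$ rescales $R(b)\supseteq R(r_k)$, so the supports are nested rather than disjoint. Your countable-composition argument must be organised for nested supports. The paper's one-line reduction to a single $S$ skips this as well.
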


\begin{proof}
Since the subtrees $\{S_j\}$ are pairwise vertex-disjoint and not joined by edges, their contractions occur in disjoint regions; it is enough to prove the claim for a single subtree $S$. We induct on $|V(S)|$; the case $|V(S)| = 1$ is trivial. For $|V(S)| > 1$, let $\ell$ be a leaf of $S$ with parent $p$ in $S$. The embedding $i(p, \ell)$ inserts $F(\ell)$ into the $T_\infty$ tile of $F(p)$, producing a collapsed mosaic at $p$. In $|\mathcal{M}|$, this $T_\infty$ tile contributes $\varnothing$ to $|F(p)|_{\text{tame}}$ while $\ell$ fills the region with $\sigma(\ell) \cdot |F(\ell)|_{\text{tame}} + \rho(\ell)$. The fundamental embedding realizes the same replacement inside the parent cube after a finite affine subdivision of that cube; the added zoom tiles are blank or straight connector tiles and introduce no new knotting. Thus absorbing $\ell$ into $p$ changes the realization by an ambient homeomorphism supported in the parent region (or, if $p$ is the root, by the same local homeomorphism together with a global affine rescaling of the containing box). Applying the inductive hypothesis to $S \setminus \{\ell\}$ gives the result.
\end{proof}


\begin{definition}
    A tree mosaic $\mathcal{M} = (T, F, i)$ is \textbf{star-like} if its underlying tree $T$ consists of a root with $N \geq 1$ children, each the start of an infinite ray---that is, every non-root vertex has exactly one child.
\end{definition}

\begin{theorem}[K\"onig's Lemma\footnote{The classic way to remember this statement is: ``If the human race never dies out, somebody now living has a line of descendants that will never die out."}]
    Every infinite, locally finite, rooted tree contains an infinite path starting from the root.
\end{theorem}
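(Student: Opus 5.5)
I will prove König's Lemma by building the infinite path one vertex at a time. At each step I keep the invariant that the current vertex has infinitely many descendants. This fits the setup of Definition~\ref{treemosaic}, where the tree is rooted and oriented so that every vertex has at most one parent and, by local finiteness, only finitely many children. For a vertex $v$, let $D(v)$ denote the set consisting of $v$ together with all its descendants, i.e.\ all vertices $w$ whose unique path to the root passes through $v$.

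The first step is to show that the root $r$ satisfies the invariant: $D(r)=V(T)$, which is infinite by hypothesis.

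The inductive step is the key one. Suppose $v_k$ has been chosen with $D(v_k)$ infinite. Let $w_1,\dots,w_m$ be the children of $v_k$, where $m$ is finite by local finiteness. Every descendant of $v_k$ other than $v_k$ itself lies below exactly one child, because paths to the root in a tree are unique. Hence
\[
D(v_k)=\{v_k\}\cup D(w_1)\cup\cdots\cup D(w_m).
\]
A finite union of finite sets is finite. So at least one $D(w_\ell)$ is infinite; in particular $m\ge 1$. Set $v_{k+1}=w_\ell$ for such an $\ell$. If we want the choice to be explicit, we can take the least such $\ell$ in the order used for the well-definedness bijection; this avoids any appeal to choice.

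By induction we obtain an infinite sequence $(r=v_0,v_1,v_2,\dots)$ in which each $v_{k+1}$ is a child of $v_k$. This is an infinite path from the root. Its vertices are distinct, because depth increases by one at each step.

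The main point requiring care is the decomposition displayed above. It uses that the tree is rooted, so the parent/child relation is well defined, and that descendant sets of distinct children partition $D(v_k)\setminus\{v_k\}$. Local finiteness is used exactly once, to make the union finite; without it, a root with infinitely many leaf children is a counterexample.
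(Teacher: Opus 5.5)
Your proof is correct. Keeping a vertex $v_k$ with $D(v_k)$ infinite, and using that $D(v_k)=\{v_k\}\cup D(w_1)\cup\cdots\cup D(w_m)$ is a finite union to pass this property to some child, is the standard argument. The paper gives no proof of K\"onig's Lemma: it quotes the lemma as a classical result and uses it only in Lemma~\ref{finite-subtrees}, on subtrees of a tree mosaic. So your argument simply supplies what the paper takes as known.

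One small caveat concerns your claim that picking the least $\ell$ ``avoids any appeal to choice.'' That relies on the ordering of children given by the well-definedness bijection. Such an ordering exists for trees underlying tree mosaics, which is the case the paper actually needs. It does not exist for an arbitrary locally finite tree as in the statement. There your recursive selection implicitly uses dependent choice, which is harmless in ordinary ZFC.
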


\begin{lemma}\label{finite-subtrees}
    Let $\mathcal{M} = (T, F, i)$ be a tree mosaic with finitely many combinatorial-wild points $\gamma_1, \ldots, \gamma_N$. Every vertex of $T$ not lying on any $\gamma_k$ roots a finite subtree.
\end{lemma}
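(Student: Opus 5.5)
The plan is a proof by contradiction using König's Lemma. Let $v$ be a vertex of $T$ that does not lie on any of the rays $\gamma_1,\ldots,\gamma_N$, and let $T_v$ be the subtree of $T$ consisting of $v$ and all of its descendants, rooted at $v$. Assume $T_v$ is infinite. Since $T$ is locally finite, so is $T_v$. König's Lemma then gives an infinite path $\delta = (v = w_0, w_1, w_2, \ldots)$ in $T_v$ starting at $v$.

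Next we extend $\delta$ to a path from the root. Let $(r = u_0, u_1, \ldots, u_k = v)$ be the unique path in $T$ from the root $r$ to $v$. Concatenating gives
\[
\gamma := (u_0, u_1, \ldots, u_k = w_0, w_1, w_2, \ldots).
\]
This is an infinite path in $T$ originating from the root, so it is a combinatorial-wild point. By hypothesis the only such points are $\gamma_1,\ldots,\gamma_N$, hence $\gamma = \gamma_m$ for some $m$. But $v$ lies on $\gamma$, so $v$ lies on $\gamma_m$, contradicting the choice of $v$. Therefore $T_v$ is finite.

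The only step needing care is bookkeeping rather than a real obstacle. We must check that $T_v$, viewed as a rooted tree, is locally finite so that König's Lemma applies; this holds because every vertex has the same children in $T_v$ as in $T$, and a vertex of $T_v$ has finite degree in $T$. We must also make sure "infinite path" in König's Lemma is read as a descending path through children. In an oriented rooted tree, any infinite simple path from the root can only ever move to children, since a vertex has a unique parent and revisiting is forbidden. So $\gamma$ is indeed of the form $(v_0, v_1, \ldots)$ used in the definition of combinatorial-wild points, and it passes through $v$.

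If one wants "roots a finite subtree" to mean the subtree mosaic $(T_v, F|_{T_v}, i|_{T_v})$ in the sense of Definition~\ref{subtree}, it remains only to note that restricting $F$ and $i$ to $T_v$ is well-defined. This holds because every child of a vertex of $T_v$ again lies in $T_v$, so the $\blacksquare$-tile bijection of Definition~\ref{treemosaic} is preserved.
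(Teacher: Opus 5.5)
Your proof is correct and follows the paper's argument: assume the subtree $T_v$ is infinite, apply K\"onig's Lemma (using local finiteness) to get an infinite descending path from $v$, and concatenate it with the root-to-$v$ path to obtain a combinatorial-wild point through $v$, which contradicts the choice of $v$. Your extra checks (that $T_v$ is locally finite, that the concatenated path only moves to children and is simple, and that the restricted subtree mosaic is well-defined) are details the paper leaves implicit.
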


\begin{proof}
    Suppose $v \in V(T)$ does not lie on any $\gamma_k$ and roots an infinite subtree $T_v$. Since $T$ is locally finite, K\"onig's lemma yields an infinite path in $T_v$ starting at $v$. Concatenating with the unique path from the root to $v$ produces a combinatorial-wild point not among $\gamma_1, \ldots, \gamma_N$, a contradiction.
\end{proof}

\begin{theorem}\label{star-thm}
    If a tree mosaic $\mathcal{M}$ has finitely many combinatorial-wild points, then it can be contracted to a star-like tree.
\end{theorem}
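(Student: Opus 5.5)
We prove Theorem~\ref{star-thm} by exhibiting an explicit collection of finite subtrees to contract. Let $\gamma_1,\ldots,\gamma_N$ be the combinatorial-wild points of $\mathcal{M}=(T,F,i)$, and let $\Gamma=\bigcup_k V(\gamma_k)$ be the set of vertices lying on some ray.

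If $N=0$, then $T$ is finite by K\"onig's Lemma. In that case we contract all of $T$ to a single vertex. (If one insists on $N\ge 1$ in the definition of star-like, this is the degenerate case, and we note it separately.) So assume $N\geq 1$.

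\textbf{Step 1: the spine.} The union $\Gamma$ is a subtree containing the root. Since distinct rays from the root eventually diverge, it is a finite tree $B$ together with $N$ disjoint infinite tails. Concretely, for $j\neq k$ the rays $\gamma_j,\gamma_k$ share a finite initial segment. Choose a depth $d$ beyond the last branching among the $\gamma_k$. Let $B$ be the finite subtree of vertices in $\Gamma$ of depth at most $d$. Past depth $d$, each $\gamma_k$ continues as its own ray $u_{k,1}\to u_{k,2}\to\cdots$, and these rays are pairwise disjoint.

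\textbf{Step 2: the subtrees to contract.} By Lemma~\ref{finite-subtrees}, every vertex off $\Gamma$ roots a finite subtree. We take two kinds of subtree.
\begin{itemize}
\item $S_0$ consists of $B$ together with all off-$\Gamma$ subtrees hanging from vertices of $B$. This is finite, since $B$ is finite and $T$ is locally finite.
\item For each tail vertex $u_{k,n}$, $S_{k,n}$ consists of $u_{k,n}$ together with all off-$\Gamma$ subtrees hanging from it. This is finite for the same reasons.
\end{itemize}
These subtrees partition $V(T)$. However, consecutive ones are joined by edges, which Definition~\ref{def:contraction} forbids within a single contraction. We therefore contract in two rounds: first $S_0$ together with all $S_{k,n}$ with $n$ even, then the images of the $S_{k,n}$ with $n$ odd. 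Within each round the chosen subtrees are pairwise non-adjacent, and the definition explicitly allows successive contractions.

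\textbf{Step 3: the result is star-like.} After both rounds, the tree has the following shape. The root is $v_{S_0}$. Its children are exactly the $N$ vertices $v_{S_{k,1}}$, since the off-$\Gamma$ children of $B$ were absorbed into $S_0$. Each $v_{S_{k,n}}$ has the unique child $v_{S_{k,n+1}}$. This is precisely the star-like shape.

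\textbf{Step 4: suitable connectivity.} It remains to check that the contracted tree mosaic is still well-defined and suitably connected, with edge embeddings given by the three cases of Definition~\ref{def:contraction}.
\begin{itemize}
\item The root mosaic is the iterated insertion into the knot mosaic $F(r)$, so it remains a knot mosaic.
\item Each non-root mosaic is an insertion into an odd-dimensional tangle mosaic, so its dimension is a product of odd numbers, hence odd and at least $3$.
\item The surviving $\blacksquare$ tiles correspond bijectively to the remaining children.
\end{itemize}
Theorem~\ref{thm:contraction-preserves-realization} then confirms that the knot type is preserved. I expect the main care to lie in two places: the adjacency restriction forcing the two-round contraction, and checking that the order-preserving bijection between $\blacksquare$ tiles and children survives the zoom-insertion.
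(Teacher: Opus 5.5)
Your proposal is correct and follows essentially the same route as the paper. Both split the spine $\Gamma$ into a finite core plus $N$ disjoint tails, use Lemma~\ref{finite-subtrees} to see that everything off the spine hangs in finite subtrees, absorb those into their spine vertices, and collapse the core into the root. Your even/odd two-round scheme adds rigor the paper lacks: the paper's ``iteratively contract'' step merges subtrees joined by spine edges, which Definition~\ref{def:contraction} forbids within a single contraction (and doing them one at a time would take infinitely many successive contractions), and your separate remark on the degenerate case $N=0$ likewise covers something the paper passes over.
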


\begin{proof}
    Let $\gamma_1, \ldots, \gamma_N$ denote the combinatorial-wild points of $\mathcal{M} = (T, F, i)$, and let $S = \bigcup_{k=1}^N V(\gamma_k)$ be the set of vertices lying on at least one $\gamma_k$. By Lemma~\ref{finite-subtrees}, every vertex not in $S$ roots a finite subtree of $T$.

    The $N$ paths all originate at the root $r$ and pairwise diverge at finitely many vertices---at most $\binom{N}{2}$ divergence points, each at finite depth. Let $D$ denote the maximum depth of any divergence vertex. Beyond depth $D$, each $\gamma_k$ occupies vertices lying on no other $\gamma_j$, so the subgraph induced by $S$ decomposes as a finite core (vertices at depth $\leq D$) together with $N$ disjoint infinite rays.

    We apply contractions in two stages. First, iteratively contract every maximal finite subtree of $T \setminus S$ into the vertex of $S$ to which it is attached, composing the embeddings along each branch. After this step, $V(T) = S$ and each vertex on a ray beyond depth $D$ has exactly one child. Second, contract the remaining finite core to a single root vertex by composing the embeddings along each path in the core. The result is a star-like tree mosaic: a single root with $N$ infinite rays. See an example in Figure \ref{star-like}.
\end{proof}

\begin{figure}[h]    \includegraphics[width=.8\linewidth]{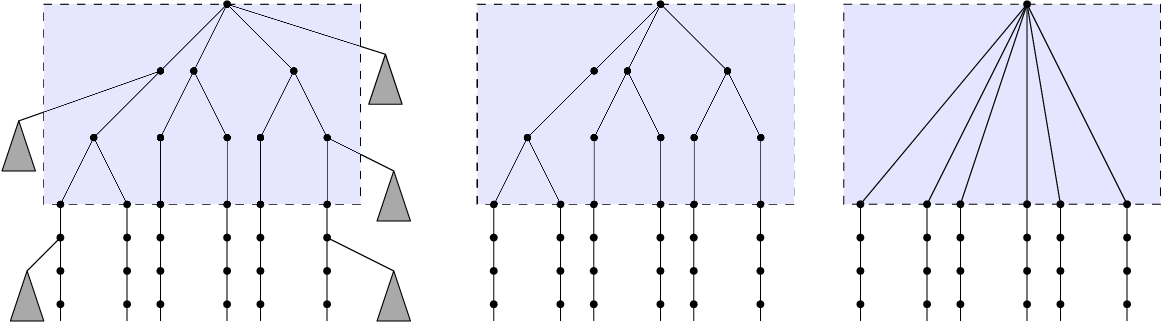} 
    \caption{According to the proof of Theorem \ref{star-thm}, we apply two contractions to obtain a star-like tree mosaic from an arbitrary one with finitely many combinatorial-wild points. The figures, moving from left to center, show the contraction of all finite portions of the tree to a core set of vertices. Moving from center to right, we picture the contraction of a finite core set of vertices up to a depth $D$ to the root vertex.}\label{star-like}
\end{figure}

\subsection{Equivalences for Tree Mosaics (with Isolated Wild Points)}

In this section, we will describe a number of types of equivalences that can be applied to a tree mosaic $M$ with tree $T=(V,E)$ and maps $F$ and $i$ of increasing complexity. As a first step, we define a {\em local equivalence} via a replacement of a single mosaic assignment.

\begin{remark}[Isolated wild points assumption]\label{rem:isolated-wild-points}
    Although the tree mosaic definition (Definition~\ref{treemosaic}) allows for non-isolated wild points---for instance, a tree mosaic whose underlying tree has uncountably many infinite rays converging to a common accumulation point---the equivalence theory developed in this section applies only to tree mosaics whose wild points are \emph{isolated}. 
\end{remark}

We now define tree mosaic versions of moves $V^*$, $VIII.a$, and $VIII.b$ from
Figure~\ref{rv-mosaic-reid}. These come in two fundamentally different types:
$V^*$ is an \emph{orientation change} of a child embedding, while $VIII.a$ and
$VIII.b$ are \emph{local tame moves} in the parent vertex mosaic.

\begin{definition}[Tree Mosaic Move $V^*$]\label{def:tree-move-Vstar}
  Let $\mathcal{M} = (T, F, i)$ be a suitably connected tree mosaic, let
  $v \in V(T)$ be a non-root vertex with parent $u$, and let $\sigma \in D_4$.
  The \textbf{tree mosaic move $V^*$ at $(u,v)$ by $\sigma$} produces a new tree
  mosaic $\mathcal{M}' = (T, F', i')$ where:
  \begin{itemize}
    \item $F'(w) = F(w)$ for all $w \in V(T)$ (vertex mosaics unchanged), and
    \item for the edge $(u, v)$, the embedding is replaced by the
      $\sigma$-oriented embedding:
      \[
        i'(u, v) = i(u, v)^\sigma,
      \]
      where $x_{ab}^\sigma(M, N) = x_{ab}(\sigma \cdot M, N)$ as in
      Definition~\ref{def:oriented-embed}; all other edge embeddings are unchanged.
  \end{itemize}
  Informally, move $V^*$ at $(u,v)$ by $\sigma$ re-embeds the child mosaic $F(v)$
  into the $T_\infty$ tile of $F(u)$ in the $\sigma$-rotated/reflected orientation,
  without altering any vertex mosaic or any other embedding.
\end{definition}

\begin{remark}
  At the level of geometric realizations, move $V^*$ at $(u,v)$ by $\sigma$ replaces
  the embedding of $|F(v)|_{\mathrm{tame}}$ inside the $T_\infty$ region of $|F(u)|$
  with the embedding of $|\sigma \cdot F(v)|_{\mathrm{tame}}$ in the same region.
  Since $\sigma \in D_4$ acts by an isometry of the ambient cube (Definition~\ref{def:D4-action}),
  the resulting realizations are ambient isotopic. This is the tree mosaic analogue of the
  classical RV move $V^*$, which flips or rotates a rigid vertex together with
  its incident arcs.
\end{remark}

\begin{definition}[Tree Mosaic Moves $VIII.a$ and $VIII.b$]\label{def:tree-move-VIII}
  Let $\mathcal{M} = (T, F, i)$ be a suitably connected tree mosaic, and let
  $v \in V(T)$. Suppose the vertex mosaic $F(v)$ contains a $T_\infty$ tile at
  position $(a, b)$, with a tame strand of $F(v)$ routed adjacent to this tile on
  a specified side.

  The \textbf{tree mosaic move $VIII.a$ (resp.\ $VIII.b$) at vertex $v$} replaces
  $F(v)$ with a new mosaic $F'(v)$ obtained by applying the $\rv{2,4}$ mosaic
  move $VIII.a$ (resp.\ $VIII.b$) from Figure~\ref{rv-mosaic-reid} locally within
  $F(v)$, leaving the $T_\infty$ tile at $(a,b)$ and all tiles outside the local
  neighborhood of $(a,b)$ unchanged. All other vertex mosaics and all embedding
  functions are unchanged:
  \begin{itemize}
    \item $F'(w) = F(w)$ for all $w \neq v$,
    \item $i'(e) = i(e)$ for all edges $e \in E(T)$.
  \end{itemize}
\end{definition}

\begin{remark}
  Tree mosaic $V^*$ moves correspond to changes in the embedding map $i$, while
  tree mosaic $VIII.a$/$VIII.b$ moves correspond to changes in the vertex mosaic map $F$.
  Together with mosaic-level equivalence moves (planar isotopy, Reidemeister I--III,
  and rigid-vertex moves IV and VI applied to individual vertex mosaics) and
  contractions (Definition~\ref{def:contraction}), these generate the full set of
  tree mosaic equivalence operations.
\end{remark}

\begin{definition}[Mosaic-Level Equivalence]\label{def:mosaic-level-equivalence}
    Two well-defined, suitably connected tree knot mosaics $M_1 = (T_1,F_1,i_1)$ and $M_2 = (T_2,F_2,i_2)$ are \textbf{mosaic-level equivalent} if
    \begin{enumerate}
        \item $T_1 = T_2$ and $i_1 = i_2$, and
        \item for each vertex $v \in V(T_1) = V(T_2)$, the mosaics $F_1(v)$ and $F_2(v)$
        have the same dimension, have $T_\infty$ tiles in the same positions with the same connection data for every outgoing embedding, and are related by a finite sequence of $\rv{2,4}$ mosaic Reidemeister moves preserving those $T_\infty$ positions and connection data.
    \end{enumerate}
\end{definition}


\subsection{Connection to Kobayashi Equivalence}

Performing Reidemeister moves on individual vertex mosaics at increasing
depth in a tree mosaic corresponds, at the level of geometric realizations,
to applying a sequence of ambient homeomorphisms whose supports shrink to
zero diameter.
Theorem~\ref{thm:vks-ambient-homeomorphism} below, due to Kobayashi
\cite{forest}, formalizes the conditions under which such a countable
composition converges to an ambient homeomorphism of $\RR^3$.
We introduce the following terminology to organize these conditions.

\begin{definition}[Kobayashi sequence]\label{def:kobayashi-sequence}
  A \textbf{Kobayashi sequence} is a pair
  \[
    ((h_k)_{k \in \NN}, (V_k)_{k \in \NN})
  \]
  where, for each $k \in \NN$, the map
  $h_k \colon \RR^3 \to \RR^3$ is a homeomorphism with
  $h_k|_{V_k^c} = \mathrm{id}$, and there exists a compact set
  $A \subseteq \RR^3$ such that:
  \begin{enumerate}
    \item \textbf{(Nested supports)}
      $\cdots \subseteq V_{k+1} \subseteq V_k \subseteq \cdots
      \subseteq V_1 \subseteq A^\circ$,
      and
    \item \textbf{(Vanishing diameters)}
      $\displaystyle\lim_{k \to \infty}
        \operatorname{diam}(V_k) = 0$.
  \end{enumerate}
  We call $V_k$ the \textbf{support} of $h_k$ in this context, and we
  write $\ms{h}_n = \comp_{k=1}^n h_k = h_n \circ \cdots \circ h_1$ for
  the partial composition.
\end{definition}

\begin{remark}\label{remark:kobayashi-gives-homeomorphism}
  Conditions (1) and (2) of Definition~\ref{def:kobayashi-sequence}
  are structural properties of the sequence that can be verified
  individually for each $h_k$ and $V_k$.
  A third condition is required to apply Theorem~\ref{thm:vks-ambient-homeomorphism}:
  \begin{enumerate}
    \setcounter{enumi}{2}
    \item $\ms{h}_\infty := \lim_{n \to \infty} \ms{h}_n$
      exists and is bijective.
  \end{enumerate}
  When conditions (1)--(3) all hold, Theorem~\ref{thm:vks-ambient-homeomorphism}
  guarantees that $\ms{h}_\infty$ is an ambient homeomorphism of $\RR^3$.
  Conditions (1) and (2) are often immediate from the geometry of the
  construction; condition (3) must be verified separately in each
  application, and is in general the most delicate requirement.
\end{remark}

\begin{remark}\label{remark:tree-mosaic-kobayashi}
  In the tree mosaic setting, the Kobayashi framework arises naturally.
  When a Reidemeister move is applied to the vertex mosaic $F(v)$ at a
  vertex $v$ of depth $d$, the resulting homeomorphism $h_v$ of $\RR^3$
  can be supported within the geometric realization of the subtree rooted at
  $v$.
  Since scale factors decrease geometrically with depth
  (cf.\ Definition~\ref{tree-mosaic-realization}), the diameters of these supports
  tend to zero as $d \to \infty$.
  Along a single branch these supports are nested, while supports belonging
  to incomparable branches have disjoint interiors and meet, at worst, along
  boundary faces where the relevant maps are fixed. Thus the global tree
  situation is a branchwise/nested and off-branch/disjoint variant of the
  Kobayashi setup rather than a literal single nested sequence in general.
  In Lemma~\ref{lem:mosaic-level-equiv} below, we use this geometry directly
  by patching shell-supported homeomorphisms.
\end{remark}

\begin{remark}\label{remark:kobayashi-sufficient-not-equivalence}
  Given a Kobayashi-type sequence satisfying conditions (1)--(3), the
  resulting ambient homeomorphism $\ms{h}_\infty$ takes any knot $K_1$ in
  its domain to an ambient-homeomorphic knot $K_2 = \ms{h}_\infty(K_1)$.
  One can therefore say that $K_1$ and $K_2$ are \textbf{related by a
  Kobayashi-type homeomorphism}.
  This relation is reflexive: every knot is related to itself via the
  constant sequence $h_k = \mathrm{id}$ with $V_k = \varnothing$.
  However, we do not claim that this relation is symmetric or transitive
  in general.
  For our purposes, the forward direction suffices: the proofs below
  construct explicit Kobayashi-type sequences to exhibit ambient
  homeomorphisms between the geometric realizations of equivalent tree
  mosaics.
  We note that the specific patching constructions used below have supports
  with disjoint interiors, and the associated homeomorphisms agree as the
  identity on shared boundaries. In this setting the order of composition is
  immaterial, and the inverse is obtained by applying the inverse local maps
  on the same supports; see the proof of Lemma~\ref{lem:mosaic-level-equiv}
  for details.
\end{remark}

\begin{theorem}[\cite{forest}, Theorem 3.3 \& Remark 3.5] \label{thm:vks-ambient-homeomorphism}
  For all $k \in \NN$, let $h_k
  \colon \RR^3 \to \RR^3$ be a homeomorphism, and for all $n \in \NN$, define
  \[
    \ms h_n = \comp_{k=1}^n h_k = (h_n \circ h_{n-1} \circ \cdots
    \circ h_{2} \circ h_1).
  \]
  For each $k$ let $V_k \subseteq \RR^3$ such that $h_k$ is identity on
  $V_k^c$. Suppose there exists a compact $A \subseteq \RR^3$ such that
  \begin{enumerate}
    \item \textbf{(Nested supports)}
      $\cdots \subseteq V_{k+1} \subseteq V_k \subseteq \cdots
      \subseteq V_1 \subseteq A^\circ$,
    \item \textbf{(Vanishing diameters)}
      $\displaystyle\lim_{k\to\infty} \operatorname{diam}(V_k) = 0$,
      and
    \item $\ms h_\infty := \displaystyle\lim_{n\to\infty} \ms h_n$
      exists and is bijective.
  \end{enumerate}
  Then $\ms h_\infty$ is a homeomorphism.
\end{theorem}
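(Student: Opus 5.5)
The plan is to show directly that $\ms h_\infty$ is continuous and then get continuity of the inverse from compactness. Two ingredients suffice: uniform convergence of the partial compositions, driven by the nested supports and vanishing diameters, and the fact that everything happens inside the fixed compact set $A$.

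\emph{Step 1: uniform Cauchy estimate.} Fix $m<n$ and write $\ms h_n = g_{m,n}\circ \ms h_m$, where $g_{m,n}=h_n\circ\cdots\circ h_{m+1}$. By the nested supports condition, each $h_j$ with $j>m$ is the identity off $V_j\subseteq V_{m+1}$, so $g_{m,n}$ is the identity on $V_{m+1}^c$. Since $g_{m,n}$ is a bijection fixing $V_{m+1}^c$ pointwise, it maps $V_{m+1}$ onto $V_{m+1}$. Hence for every $y$, either $g_{m,n}(y)=y$, or both $y$ and $g_{m,n}(y)$ lie in $V_{m+1}$. In either case $\|g_{m,n}(y)-y\|\le \operatorname{diam}(V_{m+1})$. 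Taking $y=\ms h_m(x)$ gives
\[
\sup_{x\in\RR^3}\|\ms h_n(x)-\ms h_m(x)\|\le \operatorname{diam}(V_{m+1})\longrightarrow 0 .
\]
So $(\ms h_n)$ converges uniformly. Its pointwise limit is the map $\ms h_\infty$ from hypothesis (3), and as a uniform limit of continuous maps it is continuous. (The existence part of (3) in fact comes for free; only bijectivity is genuinely used.)

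\emph{Step 2: localize to $A$.} Every $h_k$ is the identity off $V_1\subseteq A^\circ$. Therefore each $\ms h_n$, and hence $\ms h_\infty$, is the identity on $A^c$. Since $\ms h_\infty$ is a bijection fixing $A^c$ pointwise, it restricts to a continuous bijection $A\to A$. Because $A$ is compact and $\RR^3$ is Hausdorff, this restriction is a closed map and so a homeomorphism of $A$.

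\emph{Step 3: glue the inverse.} It remains to check continuity of $\ms h_\infty^{-1}$ on all of $\RR^3$. On the open set $A^c$ the inverse is the identity, and on the closed set $A$ it is continuous by Step 2. The only subtle points are those of $\partial A$ approached from outside. Since $V_1\subseteq A^\circ$, I would pass to a slightly smaller compact set $A'$ with $\overline{V_1}\subseteq A'^\circ$ and $A'\subseteq A$, obtained by thickening $\overline{V_1}$ inside $A^\circ$ when $V_1$ is relatively compact there. Then $\ms h_\infty^{-1}$ is the identity on the open set $(\overline{V_1})^c$ and continuous on the compact set $A'$. These two sets are an open set and a set whose interior covers the rest, so continuity follows by the pasting lemma on the open cover $\{A'^\circ,(\overline{V_1})^c\}$.

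I expect Step 3 to be the main obstacle, specifically making sure the open/closed pasting is legitimate when $V_1$ is not assumed closed. The fallback is to apply Step 2 with $A$ replaced by a compact neighborhood of $\overline{V_1}$ inside $A^\circ$ from the start. That makes $\ms h_\infty$ equal to the identity on a neighborhood of the complement of that compact set, and the gluing becomes immediate.
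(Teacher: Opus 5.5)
The paper gives no proof of this statement; it is quoted from Kobayashi \cite{forest}, so your argument has to stand on its own. Steps~1 and~2 do stand. Since $g_{m,n}$ is a bijection fixing $\RR^3\setminus V_{m+1}$ pointwise, it maps $V_{m+1}$ onto itself. That gives the uniform bound $\operatorname{diam}(V_{m+1})$, hence uniform convergence and continuity of $\ms h_\infty$. The same invariance argument makes $\ms h_\infty|_A$ a continuous bijection of the compact Hausdorff space $A$, hence a homeomorphism of $A$. Your side remark is also correct: existence of the limit is automatic, but bijectivity is a genuine hypothesis, since nested contractions can collapse a ball to a point in the limit.

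Step~3, however, fails as written. Both your main route and your fallback need a compact $A'\subseteq A$ with $\overline{V_1}\subseteq A'^\circ$, which forces $\overline{V_1}\subseteq A^\circ$. The hypotheses give only $V_1\subseteq A^\circ$. Take $A$ a closed ball and $V_1=A^\circ$: condition~(1) holds, but $\overline{V_1}=A$ and no such $A'$ exists.

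The difficulty you anticipated at $\partial A$ is not real, though. Because $\ms h_\infty$ is a bijection that is the identity off $V_1$, its inverse is the identity on the \emph{closed} set $\RR^3\setminus A^\circ\subseteq\RR^3\setminus V_1$, not merely on the open set $\RR^3\setminus A$. By Step~2 the inverse is continuous on the closed set $A$. These two closed sets cover $\RR^3$ and meet in $\partial A$, where both descriptions are the identity. The pasting lemma for a finite closed cover then gives continuity of $\ms h_\infty^{-1}$ on all of $\RR^3$.

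Alternatively, enlarge rather than shrink. Since $\overline{V_1}$ is compact, replace $A$ by a closed ball containing $\overline{V_1}$ in its interior and rerun Step~2; your open-cover pasting then works verbatim. With either repair the proof is complete. Once Step~1 gives continuity, invariance of domain would also finish the argument directly from bijectivity, but your compactness argument is more elementary.
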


Note that the subsets $V_k$ correspond nicely with geometric realizations of tree mosaic branches. The theorem motivates the equivalence arguments below, where the relevant local homeomorphisms are supported on shrinking tree-mosaic regions and are then patched directly.

\begin{proposition}[$T_\infty$-identity property]\label{prop:tinfty-identity}
    Let $L_1, L_2$ be $\rv{2,4}$ tangle $n$-mosaics related by a finite sequence of $\rv{2,4}$ mosaic Reidemeister moves, with $T_\infty$ tiles at the same positions $(a_1, b_1), \ldots, (a_m, b_m)$ and the same connection data on the faces of each $T_\infty$ tile (i.e., the adjacent tame tiles have the same connection points facing each $T_\infty$ tile in both $L_1$ and $L_2$).
    Then the ambient isotopy $\tilde{h} \colon [0,n]^2 \times [0,1] \to [0,n]^2 \times [0,1]$ guaranteed by Proposition~\ref{prop:lk-easy-rv} can be chosen so that $\tilde{h}$ is the identity on the entirety of each $T_\infty$ sub-cube $C_j := [a_j - 1, a_j] \times [b_j - 1, b_j] \times [0,1]$.
\end{proposition}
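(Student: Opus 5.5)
Because $\tilde h$ is produced by concatenating local isotopies, I would rebuild it move by move and modify each local isotopy so that it fixes the $T_\infty$ sub-cubes pointwise. Write the sequence as $L_1 = P_0 \to P_1 \to \cdots \to P_s = L_2$. I would first reduce to the case where every intermediate $P_t$ has its $T_\infty$ tiles at the same positions $(a_j,b_j)$. The moves that can translate a $\blacksquare$ tile are planar isotopy moves and moves IV and VI. Each of these can be replaced by a different sequence of moves in which strands are routed around the $\blacksquare$ tile rather than the tile being moved. If tiles are instead allowed to move in intermediate steps, it is enough to track the composite displacement of each $T_\infty$ tile: since the endpoints agree in position and connection data, the composite displacement is a closed motion. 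Moves $V^*$, VIII.a and VIII.b rotate or flip a $\blacksquare$, and these are the only moves that act inside $C_j$.

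Next I would deal with the individual elementary moves. If a move is supported on a union of tile sub-cubes that does not contain any $C_j$, its realizing isotopy (Proposition~\ref{prop:lk-easy-rv}) is already the identity on $\bigcup_j C_j$, because the isotopy is rel the boundary of its support. If the support does contain some $C_j$, as in moves IV, V*, VI and VIII, I would change the local isotopy by precomposing with a correction. The realization of a $T_\infty$ tile in $|L|_{\text{tame}}$ is empty, so the only feature of $C_j$ that has to be preserved is its set of face connection points. A strand passing over or under the vertex, as in IV or VIII, can be arranged to move in the complement of a slightly shrunken copy of $C_j$. After a small collar adjustment on $\partial C_j$, the isotopy then fixes all of $C_j$. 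This adjustment is legitimate because the connection points on $\partial C_j$ are fixed by the move.

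The remaining issue is the net motion of each $C_j$ over the whole sequence, which by the hypothesis returns every $C_j$ to its original position with the same connection data. I would compose the per-move isotopies and call the result $h$. Then $h(C_j)=C_j$, and $h$ fixes the connection points on $\partial C_j$. By the Alexander trick on the $3$-ball $C_j$ relative to those finitely many boundary points, $h|_{C_j}$ is isotopic rel $\partial$-points to the identity. I would extend this isotopy to a thin collar neighborhood of $C_j$ that meets $|L_2|_{\text{tame}}$ only in short radial arcs at the connection points, and compose. The result is $\tilde h$, which equals the identity on every $C_j$ and still carries the tame arcs of $L_1$ to those of $L_2$.

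The main obstacle is a $V^*$-type rotation of a $\blacksquare$ tile. Here $h|_{\partial C_j}$ may permute the connection points nontrivially during the move, even though the net data at the end agree. For that step I would first try to show that the hypothesis of the same connection data forces the cumulative $D_4$ element acting on each $C_j$ to fix the incident strand directions. If it does not, I would absorb the leftover symmetry by a twist isotopy supported in the collar, exactly as the classical $V^*$ move twists the incident arcs. This keeps $C_j$ itself fixed.
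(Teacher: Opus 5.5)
There is a genuine gap, and it sits at the step you yourself call the main obstacle: neither of your fallbacks for $V^*$ can work.

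\textbf{The $V^*$ step.} The hypothesis records only \emph{which faces} of $C_j$ carry connection points. It does not record \emph{which tame strand} is attached at each face, so it cannot force the cumulative $D_4$ element to act trivially.

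For a concrete case, take a degree-$4$ $\blacksquare$ whose tame arcs in $L_1$ join $c_N$ to $c_E$ and $c_S$ to $c_W$. Apply one move that rotates or flips the vertex, so that in $L_2$ the arcs join $c_N$ to $c_W$ and $c_S$ to $c_E$. All four faces are occupied in both mosaics, so the hypothesis holds.

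Any homeomorphism that is the identity on $C_j$ fixes the four face centres. It must therefore send the component of $|L_1|_{\mathrm{tame}}$ through $c_N$ and $c_E$ to a component of $|L_2|_{\mathrm{tame}}$ through $c_N$ and $c_E$, and there is none. A twist in a collar of $C_j$ can braid the incident strands, but it cannot change where they are attached.

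So in this situation no $\tilde h$ of the required kind exists at all. The step cannot be repaired without a stronger hypothesis, for instance that the tame arcs attach to the faces of each $C_j$ in the same way, or that no move in the sequence flips, rotates, or passes a strand across a $\blacksquare$.

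The paper takes a much shorter route. It asserts that no move alters a $\blacksquare$ entry, and it builds each move's isotopy tile by tile, rel the boundary of the modified tame tiles. Then $C_j$ never meets a support, and your reduction step, net-motion bookkeeping and Alexander trick are unnecessary. That rel-boundary assertion, however, fails for $V^*$ for exactly the reason above, so it does not supply the missing idea either.

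\textbf{Two further steps fail as written.}

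First, the IV-type step, where a strand passes over or under the vertex. The column $C_j=[a_j-1,a_j]\times[b_j-1,b_j]\times[0,1]$ spans the full height of the slab $[0,n]^2\times[0,1]$ on which $\tilde h$ acts, so there is no room ``over'' the vertex outside $C_j$. A strand carried from one side of the $\blacksquare$ to the other changes its homotopy class rel endpoints, in the complement of the $T_\infty$ columns, by a loop around $C_j$. A homeomorphism fixing the columns and $\partial([0,n]^2\times[0,1])$ pointwise preserves such classes, because the fundamental group of that complement is carried by its top face. Hence your shrunken-copy trick yields a map fixing only a smaller box, and no collar adjustment can upgrade it to one fixing all of $C_j$.

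Second, the Alexander trick gives an isotopy to the identity rel all of $\partial C_j$, not rel finitely many marked points. A self-map of $\partial C_j$ fixing four points can be essential, since the pure mapping class group of the four-marked sphere is free of rank two. In that case your collar correction braids the incident strands and no longer preserves $|L_2|_{\mathrm{tame}}$.
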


\begin{proof}
    It suffices to show the claim for a single Reidemeister move; the general case follows by composition (each step fixes every~$C_j$, so the composition does as well). Each $\rv{2,4}$ mosaic Reidemeister move modifies a set of tame tiles at specified positions; the $T_\infty$ tile at $(a_j, b_j)$ is \emph{never among the modified tiles}: standard moves (planar isotopy, R1--R3) involve only tame tiles, and rigid vertex moves (IV, V$^*$, VI) rearrange arcs in the tame tiles adjacent to the rigid vertex while the $T_\infty$ tile itself is unchanged (its mosaic entry is not altered by any move).

    Let $W$ denote the union of the (closed) modified tile sub-cubes (for rigid vertex moves, $W$ consists of the adjacent tame tiles only---not~$C_j$, since the $T_\infty$ tile is unmodified). The arc configurations before and after the move have identical boundary data on~$\partial W$: each move preserves connection points on every tile face. We construct the ambient isotopy \emph{tile by tile, rel\/~$\partial W$}. Within each modified tile sub-cube~$C$ (a PL $3$-ball), the old and new arc configurations are proper PL arcs sharing the same connection points on~$\partial C$, with arc interiors contained in~$C^\circ$. Since all tame tile arcs are unknotted proper PL arcs in a $3$-ball, any two such arc systems with the same boundary data are PL isotopic rel~$\partial C$ \cite[Corollary~3.23 and \S4]{rourkesanderson1972}; by the isotopy extension theorem \cite[Theorem~4.11]{rourkesanderson1972}, this rel-boundary arc isotopy extends to an ambient PL isotopy of~$C$ that is stationary on~$\partial C$, hence fixes~$\partial C$ pointwise. Define~$\tilde{h}$ to be this isotopy on each modified tile and the identity elsewhere; the pieces agree on shared faces by the rel-boundary construction (and by the pasting lemma), giving a well-defined continuous ambient isotopy.

    Since no $T_\infty$ tile is among the modified tile sub-cubes, the open interior $C_j^\circ$ is disjoint from~$W^\circ$ (distinct tile sub-cubes have disjoint interiors). Every face of $C_j$ either lies on~$\partial W$ (if shared with a modified tile), on~$\partial([0,n]^2 \times [0,1])$ (if~$C_j$ is a boundary tile), or in the complement of~$W$ entirely. In all cases $\tilde{h}$ is the identity: on~$\partial W$ by the rel-boundary construction, and outside~$W$ by the identity extension. Therefore $\tilde{h}$ fixes~$C_j$ pointwise.
\end{proof}

\begin{lemma}[Mosaic-level equivalence implies ambient homeomorphism]\label{lem:mosaic-level-equiv}
    If $M_1$ and $M_2$ are mosaic-level equivalent tree knot mosaics, then their geometric realizations $|M_1|$ and $|M_2|$ are ambient homeomorphic. More precisely, there exists an ambient homeomorphism $\ms{h}_\infty \colon \RR^3 \to \RR^3$ with $\ms{h}_\infty(|M_1|) = |M_2|$.
\end{lemma}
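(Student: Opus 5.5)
The plan is to build the homeomorphism vertex by vertex from local isotopies and patch them together. For each vertex $v$ of the common tree $T$, the mosaics $F_1(v)$ and $F_2(v)$ have the same dimension, the same $T_\infty$ positions, and the same connection data. They are related by finitely many $\rv{2,4}$ mosaic Reidemeister moves. By Proposition~\ref{prop:lk-easy-rv} together with Proposition~\ref{prop:tinfty-identity}, we get an ambient isotopy $\tilde h_v$ of $[0,n_v]^2\times[0,1]$ with the following properties:
\begin{itemize}
\item its final map takes the tame-tile arcs of $F_1(v)$ to those of $F_2(v)$;
\item it is the identity on $\partial([0,n_v]^2\times[0,1])$, since moves are rel the boundaries of the modified tiles and boundary connection data agree;
\item it is the identity on every $T_\infty$ sub-cube $C_j$.
\end{itemize}
Let $h_v$ be the conjugate of this final map by the affine map $x\mapsto \sigma(v)x+\rho(v)$ of Definition~\ref{tree-mosaic-realization}, extended by the identity outside the box $B_v=\rho(v)+\sigma(v)([0,n_v]^2\times[0,1])$. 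Since $T_1=T_2$ and $i_1=i_2$, the scale factors, positions and connecting segments coincide for $M_1$ and $M_2$. So $h_v$ carries $\sigma(v)|F_1(v)|_{\text{tame}}+\rho(v)$ onto the corresponding piece for $M_2$, and fixes every child box and every connecting segment.

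I will handle the connecting segments by requiring each $h_v$ to be the identity on the shell $S_v = B_v\setminus\bigcup_{w\text{ child}}\mathrm{int}(\text{$T_\infty$ cube of }w)$ outside the modified tiles. The segments $\mathcal C(u,v)$ run from the face of the $T_\infty$ sub-cube of $u$ to the boundary of the child box. They therefore lie in a $T_\infty$ sub-cube, where $h_u$ is the identity, and outside $B_v$, where $h_v$ is the identity. If the embedding orientation $\sigma$ is nontrivial, I first apply the $D_4$ isometry to $F(v)$; this is the same for both mosaics, so nothing changes.

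Next I define $\mathsf h_\infty$ by the rule: on the interior of the shell of $v$ (the region of $B_v$ outside its child $T_\infty$ cubes) set $\mathsf h_\infty = h_v$, and at limit points $p_\gamma$ of infinite paths set $\mathsf h_\infty(p_\gamma)=p_\gamma$. The shells of distinct vertices have disjoint interiors, and every $h_v$ is the identity on shell boundaries. So $\mathsf h_\infty$ is a well-defined bijection, with inverse given by patching the $h_v^{-1}$ in the same way. Equivalently, order the vertices breadth-first and take $\mathsf h_n=h_{v_n}\circ\cdots\circ h_{v_1}$. These maps have commuting, disjoint-interior supports, so the limit exists pointwise: each point off the wild set is moved by at most one $h_v$. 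This is the branchwise variant of Theorem~\ref{thm:vks-ambient-homeomorphism} described in Remark~\ref{remark:tree-mosaic-kobayashi}.

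\textbf{The main obstacle is continuity at the limit points $p_\gamma$.} Away from these points, a neighborhood meets only finitely many shells, and continuity follows from the pasting lemma. At $p_\gamma$, I will use that $\mathsf h_\infty$ maps each box $B_{v_k}$ along $\gamma$ into itself, and that $p_\gamma\in B_{v_k}$ for all $k$. The diameters of these boxes are bounded by $\sqrt3\,\sigma(v_{k-1})\le \sqrt3\cdot 3^{-(k-1)}\to 0$, as in Proposition~\ref{geometric-realization-prop}(3). Given $\varepsilon>0$, choose $k$ with $\operatorname{diam}B_{v_k}<\varepsilon$. The points of a small neighborhood of $p_\gamma$ either lie in $B_{v_k}$ or in one of the finitely many neighboring shells of the finitely many boxes containing $p_\gamma$ on their boundary, and each of those shells is itself continuous at $p_\gamma$ with value $p_\gamma$. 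Hence $\|\mathsf h_\infty(x)-p_\gamma\|<\varepsilon$ near $p_\gamma$. The same argument applied to $\mathsf h_\infty^{-1}$ gives continuity of the inverse, so $\mathsf h_\infty$ is a homeomorphism of $\mathbb R^3$ that is the identity outside the root box.

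Finally, $\mathsf h_\infty$ maps the union defining $|M_1|$ onto the union defining $|M_2|$. Since it is a homeomorphism, it commutes with closure, and so $\mathsf h_\infty(|M_1|)=|M_2|$.
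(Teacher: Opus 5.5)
Your proposal is correct and follows essentially the paper's route: rescaled shell-supported isotopies from Proposition~\ref{prop:tinfty-identity} patched across disjoint-interior shells, the same recipe for the inverse, and continuity at each $p_\gamma$ from the invariance and shrinking diameters of the boxes $B_{v_k}=R(v_k)$ along $\gamma$. The one step to justify rather than assert is that only finitely many shells outside $B_{v_k}$ come near $p_\gamma$ (in fact every $B_{v_j}$ has $p_\gamma$ on its bottom face $z=0$): the paper gets this from the rule that $T_\infty$ tiles never lie on a mosaic's boundary, so a small half-ball $\{z\ge 0\}$ about $p_\gamma$ lies inside $R(v_D)$, while points with $z<0$ are fixed.
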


\begin{proof}
    We construct $\ms{h}_\infty$ by patching together vertex-level homeomorphisms on a family of pairwise-disjoint shells inside $\RR^3$.

    For each vertex $v$ of $T$ at depth $d$, define its \textbf{geometric region}
    $$R(v) := \rho(v) + [0,\; \sigma(v) \cdot \dim F(v)]^2 \times [0,\; \sigma(v)],$$
    where $\sigma(v), \rho(v)$ are the scale and position from Definition~\ref{tree-mosaic-realization}. This is the bounding box in which the realization of $F(v)$ sits: anchored at $\rho(v)$, with square footprint of side $\sigma(v) \cdot \dim F(v)$ in the $xy$-plane and height $\sigma(v)$ in the $z$-direction. For non-root vertices, using $\dim F(v) \geq 3$, one checks $\operatorname{diam}(R(v)) \leq \sqrt{3} \cdot 3^{-(d-1)}$; the root region is a fixed compact box. The regions satisfy:
    \begin{enumerate}
        \item[(i)] \textbf{Nested descendants.} If $u$ is a descendant of $v$, then $R(u) \subseteq R(v)$: each child $c$ embeds at a $T_\infty$ tile of $R(v)$, so $R(c)$ sits inside the corresponding tile sub-cube, and the property iterates.
        \item[(ii)] \textbf{Disjoint off-branch regions.} If $u, u'$ share no ancestor-descendant relationship, then $R(u)^\circ \cap R(u')^\circ = \varnothing$; for immediate siblings the regions do not even share a face, since black tiles are never adjacent.
    \end{enumerate}
    Define the \textbf{shell} of $v$ as
    $$V_v \;:=\; R(v) \setminus \bigcup_{c \text{ child of } v} R(c).$$
    Properties (i) and (ii) make the shells $\{V_v\}$ have pairwise disjoint interiors: if $v \neq v'$ are off-branch then $R(v)^\circ \cap R(v')^\circ = \varnothing$, and if $v'$ is a descendant of $v$ with $c$ the child of $v$ on the path to $v'$, then $V_{v'} \subseteq R(v') \subseteq R(c)$, which was excised from $V_v$. Possible intersections of shells occur only along shared boundary faces, edges, or corners. The shells, together with the exterior of $R(\text{root})$ and the wild points $p_\gamma = \bigcap_d R(v_d)$ (limits of infinite paths in $T$), decompose $\RR^3$ up to these shared boundary sets.

    Next, we construct a vertex-level homeomorphism for each $v$. The mosaics $F_1(v), F_2(v)$ are related by $\rv{2,4}$ Reidemeister moves with $T_\infty$ tiles at the same positions, and these moves preserve boundary strands, so (together with $i_1 = i_2$) the connection data at each $T_\infty$ tile agrees between $F_1(v)$ and $F_2(v)$. Proposition~\ref{prop:tinfty-identity} then yields a homeomorphism $\tilde{h}_v$ of the mosaic cube $[0, \dim F_1(v)]^2 \times [0,1]$ carrying $|F_1(v)|_{\mathrm{tame}}$ to $|F_2(v)|_{\mathrm{tame}}$, fixing the cube's boundary, and fixing every $T_\infty$ sub-cube pointwise. Rescaling $\tilde{h}_v$ to act on $R(v)$ and extending by the identity defines $h_v \colon \RR^3 \to \RR^3$. Because $h_v$ fixes $\partial R(v)$ and every child region $R(c)$ pointwise, it restricts to a self-homeomorphism of the shell $V_v$ that fixes $\partial V_v$, and is the identity outside $V_v$.

    We now patch these together: define $\ms{h}_\infty \colon \RR^3 \to \RR^3$ by
    $$\ms{h}_\infty(q) \;=\; \begin{cases} h_v(q), & q \in V_v \text{ for some vertex } v, \\ q, & \text{otherwise}. \end{cases}$$
    This is unambiguous: on any shared boundary set, all relevant maps are the identity. The inverse is given by the same recipe applied to $\{h_v^{-1}\}$: each $h_v^{-1}$ is a self-homeomorphism of $V_v$ fixing $\partial V_v$, so $\ms{h}_\infty^{-1}$ is well-defined and a genuine two-sided inverse. Hence $\ms{h}_\infty$ is a bijection of $\RR^3$.

    We check continuity in two cases, corresponding to whether the point in question is tame or wild. For a non-wild point $q$, either $q$ lies outside $R(\text{root})$ or $q$ belongs to some shell $V_v$. In either case, a neighborhood of $q$ is governed by a single map — the identity, or a particular $h_v$ — so continuity at $q$ is inherited from that map. At shell boundaries, every relevant vertex map is the identity (on $\partial R(v)$, $h_v$ is the identity by construction; on $\partial R(c)$, $h_v$ fixes the $T_\infty$ sub-cube $R(c)$ pointwise), so the local descriptions glue continuously.

    Now consider a wild point $p_\gamma$, where $\gamma = (v_0, v_1, \ldots)$. The key observation is that $R(v_D)$ shrinks down onto $p_\gamma$ as $D \to \infty$, and $\ms{h}_\infty$ preserves each $R(v_D)$, so nearby points stay nearby.

    Given $\varepsilon > 0$, choose $D$ with $\operatorname{diam}(R(v_D)) < \varepsilon$. Every $h_v$ sends $R(v_D)$ into itself: strict ancestors fix $R(v_D)$ pointwise (Proposition~\ref{prop:tinfty-identity}), $v_D$ and its descendants are supported in $R(v) \subseteq R(v_D)$, and off-branch $v$ have $R(v)^\circ$ disjoint from $R(v_D)^\circ$. Hence $\ms{h}_\infty(R(v_D)) \subseteq R(v_D)$. Moreover, by Definition~\ref{rv24def}, $T_\infty$ tiles are forbidden on the boundary of a tangle mosaic, so $R(v_{D+1})$ sits at an interior tile position of $F(v_D)$, a positive distance from the \emph{lateral boundary} of $R(v_D)$ (i.e.\ $\partial R(v_D) \setminus (\{z = 0\} \cup \{z = \sigma(v_D)\})$):
    $$\operatorname{dist}(R(v_{D+1}),\, \partial_{\mathrm{lateral}} R(v_D)) \;\geq\; \sigma(v_D) \;>\; 0.$$
    In the $z$-direction, $p_\gamma$ sits on the bottom face $\{z = 0\}$ shared by every $R(v) \subseteq \{z \geq 0\}$. Setting $\delta = \min\bigl(\varepsilon,\; \tfrac{1}{2}\operatorname{dist}(p_\gamma, \partial_{\mathrm{lateral}} R(v_D)),\; \sigma(v_D)\bigr)$, any $q \in B(p_\gamma, \delta)$ with $q_z \geq 0$ lies in $R(v_D)$, so $\|\ms{h}_\infty(q) - p_\gamma\| \leq \operatorname{diam}(R(v_D)) < \varepsilon$; any $q$ with $q_z < 0$ lies below every region, is fixed by $\ms{h}_\infty$, and satisfies $\|\ms{h}_\infty(q) - p_\gamma\| < \delta \leq \varepsilon$. Thus $\ms{h}_\infty(B(p_\gamma, \delta)) \subseteq B(p_\gamma, \varepsilon)$. The same argument applied to $\ms{h}_\infty^{-1}$ gives continuity there as well, so $\ms{h}_\infty$ is a homeomorphism of $\RR^3$.

    Finally, we show that $\ms{h}_\infty(|M_1|) = |M_2|$. For each vertex $v$, $h_v$ carries $\sigma(v) \cdot |F_1(v)|_{\mathrm{tame}} + \rho(v)$ to $\sigma(v) \cdot |F_2(v)|_{\mathrm{tame}} + \rho(v)$, and no other vertex map disturbs this piece. For each parent-child edge $(u,v)$, the connecting segment $\mathcal{C}(u,v) \subseteq R(u)$ is carried by $h_u$ to the corresponding segment for $M_2$: the parent endpoint moves by the prescribed $\rv{2,4}$ isotopy, while the child endpoint lies in a $T_\infty$ sub-cube and is fixed. Wild points are fixed by $\ms{h}_\infty$ and coincide across $M_1, M_2$ since the tree, scale, position, and embedding data agree. Since $\ms{h}_\infty$ is a homeomorphism, it commutes with closure, giving $\ms{h}_\infty(|M_1|) = |M_2|$.
\end{proof}

\begin{definition}[Tree Mosaic Equivalence]
    Let $M, P, P', N$ be tree knot mosaics. Suppose $P, P'$ are mosaic-level equivalent, and let $P$ be a contraction of $M,$ and $P'$ a contraction of $N$. Then, $M$ and $N$ are \textbf{equivalent}.
\end{definition}

\begin{corollary}[Tree Mosaic Equivalence Implies Topological Equivalence]\label{cor:tree-mosaic-equiv}
    If $M$ and $N$ are equivalent tree knot mosaics, then $|M|$ and $|N|$ are ambient homeomorphic.
\end{corollary}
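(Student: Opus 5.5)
The plan is to unwind the definition of tree mosaic equivalence into a chain of three ambient homeomorphisms and compose them. By definition, there are tree knot mosaics $P$ and $P'$ with $M \searrow P$, $N \searrow P'$, and $P$ mosaic-level equivalent to $P'$. We then produce ambient homeomorphisms
\[
|M| \xrightarrow{\ \phi\ } |P| \xrightarrow{\ \ms{h}_\infty\ } |P'| \xrightarrow{\ \psi^{-1}\ } |N|,
\]
and their composite carries $|M|$ onto $|N|$.

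First, Theorem~\ref{thm:contraction-preserves-realization} applied to $M \searrow P$ gives an ambient homeomorphism $\phi$ of $\RR^3$ with $\phi(|M|) = |P|$. Applied to $N \searrow P'$, it gives $\psi$ with $\psi(|N|) = |P'|$.

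One point needs care. A contraction may involve a countable family $\{S_j\}$ of finite subtrees, and Theorem~\ref{thm:contraction-preserves-realization} treats them as acting in disjoint regions. For the infinitely-many case, I will note that the local homeomorphisms are supported in the regions $R(r_j)$, which have pairwise disjoint interiors. Their diameters tend to zero with depth, and each map is the identity on the boundary of its region. Hence the same shell-patching and continuity-at-wild-points argument as in Lemma~\ref{lem:mosaic-level-equiv} applies: every map preserves each $R(v_D)$ along an infinite path, and these regions shrink to the wild point. So the patched map is a genuine homeomorphism. Because the statement is a corollary, I will cite that theorem as given and only remark on this point.

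Second, Lemma~\ref{lem:mosaic-level-equiv} applied to $P$ and $P'$ gives an ambient homeomorphism $\ms{h}_\infty$ with $\ms{h}_\infty(|P|) = |P'|$.

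Finally, set $\Phi = \psi^{-1} \circ \ms{h}_\infty \circ \phi$. It is a composition of homeomorphisms of $\RR^3$, hence a homeomorphism, and $\Phi(|M|) = \psi^{-1}(|P'|) = |N|$. The only remaining observation is that inverses of ambient homeomorphisms are ambient homeomorphisms. This is what lets us reverse the direction on the $N$ side, where the definition only gives $N \searrow P'$.

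The main obstacle is not the composition itself. It is ensuring that each cited result applies to tree mosaics in our setting, namely with isolated wild points as in Remark~\ref{rem:isolated-wild-points}, and in particular that contractions of infinitely many finite subtrees still yield an honest homeomorphism. I would handle this by reducing to the patching argument of Lemma~\ref{lem:mosaic-level-equiv}, as described above.
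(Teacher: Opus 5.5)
Your argument is the paper's own proof: unwind the definition of equivalence, apply Theorem~\ref{thm:contraction-preserves-realization} to $M \searrow P$ and $N \searrow P'$ and Lemma~\ref{lem:mosaic-level-equiv} to $P, P'$, then compose, inverting on the $N$ side. Your side remark on countably many subtrees is not needed once you cite the theorem, and as stated it is wrong: the definition lets some $S_k$ lie below another $S_j$ on the same branch, so the regions $R(r_j)$ need not have disjoint interiors, and contracting a branching $S_j$ rescales the subtrees below it rather than fixing them, so that case would need a nested, Kobayashi-type limit argument rather than the disjoint shell patching of Lemma~\ref{lem:mosaic-level-equiv}.
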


\begin{proof}
    By definition, there exist tree knot mosaics $P, P'$ such that $P$ is a contraction of $M$, $P'$ is a contraction of $N$, and $P, P'$ are mosaic-level equivalent.
    By Theorem~\ref{thm:contraction-preserves-realization}, contractions preserve the represented knot type, so $|M|$ is ambient homeomorphic to $|P|$ and $|N|$ is ambient homeomorphic to $|P'|$.
    By Lemma~\ref{lem:mosaic-level-equiv}, $|P|$ and $|P'|$ are ambient homeomorphic.
    Therefore $|M|$ and $|N|$ are ambient homeomorphic.
\end{proof}




\section{Future Directions}

\begin{question}
    Which invariants from tame knot theory can be generalized to the wild setting by taking advantage of the tree mosaic structure provided in this paper?
\end{question}

\begin{question}
    What other types of wild knots can we capture by generalizing the definition of $\rv{2,4}$ $n$-mosaics to rigid vertices with larger degrees, allowing for adjacent $T_{\infty}$ tiles and insertions of $m$-tangles where $m>2$? (See Figure \ref{big_ones}.)
\end{question}



\begin{figure}[h]
\centering
\includegraphics[width=.6\linewidth]{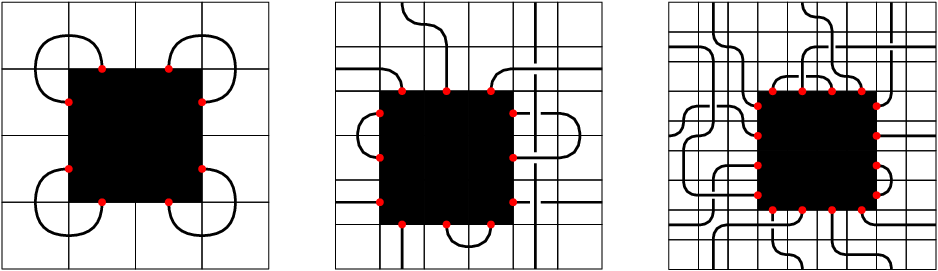}
\caption{Examples of rigid vertex knot and tangle mosaics with larger regions of adjacent $T_{\infty}$ tiles.}\label{big_ones}
\end{figure}


\newpage

\printbibliography

\end{document}